\documentclass{amsart}
\usepackage[utf8]{inputenc}
\usepackage{amsmath}
\usepackage{amssymb}
\usepackage{amsthm}
\usepackage{fullpage}

\usepackage{breqn}
\newtheorem{theorem}{Theorem}[]
\newtheorem{lemma}{Lemma}[]

{}
\newtheorem{proposition}{Proposition}{}
\title{High moments of theta functions and character sums}
\author{Barnabás Szabó }
\date{}
\usepackage[maxbibnames=99]{biblatex}
\addbibresource{refe.bib}

\numberwithin{equation}{section}

\address{Mathematics Institute, Zeeman Building, University of Warwick, Coventry CV4 7AL, England}
\email{\tt Barnabas.Szabo@warwick.ac.uk}
\begin{document}
\maketitle
\begin{abstract}
   Assuming the Generalised Riemann Hypothesis, we prove a sharp upper bound on moments of shifted Dirichlet $L$-functions. We use this to obtain conditional upper bounds on high moments of theta functions. Both of these results strengthen theorems of Munsch, who proved almost sharp upper bounds for these quantities. The main new ingredient of our proof comes from a paper of Harper, who showed the related result $\int_{0}^T |\zeta(1/2+it)|^{2k} \ll_k T(\log T)^{k^2} $ for all $k\geq 0$ under the Riemann Hypothesis. Finally, we obtain a sharp conditional upper bound on high moments of character sums of arbitrary length. 
\end{abstract}
\section{Introduction}
Calculating the moments of families of $L$-functions has been the subject of research amongst number theorists for many decades. Estimating these moments almost always comes down to approximating the $L$-function by the corresponding Dirichlet polynomial, which in turn can be shown to exhibit significant cancellation upon integrating. In 2008, Soundararajan \cite{sound} found an ingenious way to obtain an upper bound on the zeta function, conditional on the Riemann Hypothesis (RH). It essentially takes the form
\begin{equation}
\label{soundupper}
    \log |\zeta(1/2+it)|\leq \Re \sum_{n\leq x}\frac{\Lambda(n)}{n^{1/2+1/\log x+it} \log n}\Big(1-\frac{\log n}{\log x}\Big)+\frac{\log t}{\log x}+O(1),
\end{equation}
where $t>2$ and $2<x<t^2$. Soundararajan used this to derive almost sharp upper bounds for moments of $\zeta(s)$ on the critical line. In particular, he showed that under RH, for each fixed real $k\geq 0$ one has
$$\int_0^T |\zeta(1/2+it)|^{2k}dt \ll_{k,\epsilon} T(\log T)^{k^2+\epsilon}.$$
This upper bound was improved by Harper \cite{harpersharp} (under RH) to
$$\int_0^T |\zeta(1/2+it)|^{2k}dt \ll_k T(\log T)^{k^2},$$
and this is sharp up to a constant (see \cite{radziwill2013continuous} for the corresponding unconditional lower bound when $k\geq 1$). Harper started with \eqref{soundupper} as well, however by a careful analysis of large values of Dirichlet polynomials he managed to bound the $2k$-th moment of $\zeta(s)$ without losing more than a constant.

The upper bound \eqref{soundupper} can be generalised for many classes of $L$-functions, which can be used to derive moment inequalities. In \cite{munschshifted} Munsch proved that if the Generalised Riemann Hypothesis (GRH) holds, then 
\begin{equation}
  \label{munsch1}
\sum_{\chi\in X_q^*}|L(1/2+it_1,\chi) \cdot L(1/2+it_2, \chi)\cdots L(1/2+it_{2k},\chi)|\ll_{\epsilon, k} \phi(q) (\log q)^{k/2+\epsilon} \prod_{1\leq i<j\leq 2k} g^{1/2}(|t_i-t_j|).
\end{equation}
Here $X_q^*$ denotes the set of primitive Dirichlet characters mod $q$, $k$ is a positive integer, and the $t_j$ are real numbers that may grow slowly with $q$. Moreover, roughly speaking, $g:\mathbb{R}_{\geq 0}\rightarrow \mathbb{R}^+$ is a correlation factor which is decreasing and $g(0)=\log q$. In particular \eqref{munsch1} implies that
\begin{equation*}
    \sum_{\chi\in X_q^*} |L(1/2,\chi)|^{2k} \ll_{k,\epsilon} (\log q)^{k^2+\epsilon}.
\end{equation*}
If the $t_j$ are relatively far apart, then (\ref{munsch1}) becomes stronger, which is expected as the values of the $L$-functions `correlate' less with each other. 
In our first theorem we get rid of the $(\log q)^{\epsilon}$ factor in \eqref{munsch1} and also slightly improve upon the correlation function $g$. Our argument will be very similar to the way Harper improved Soundararajan's moment inequality.
\begin{theorem}
\label{t1}
Let $2k\geq 1$ be a fixed integer and $a_1,\ldots, a_{2k}, A$ be fixed positive real numbers. Assume that for any Dirichlet character $\chi$ mod $q$, the corresponding $L$-function $L(s,\chi)$ satisfies the Riemann Hypothesis. Let $X_q^*$ denote the set of primitive characters modulo $q$. Let $t=(t_1,\ldots ,t_{2k})$ be a real $2k$-tuple with $|t_j|\leq  q^A$. Then
$$\sum_{\chi \in X_q^*} \big| L\big(1/2+it_1,\chi \big) \big|^{a_1} \cdots \big| L\big(1/2+it_{2k},\chi \big) \big|^{a_{2k}}\ll \phi(q)(\log q)^{(a_1^2+\cdots +a_{2k}^2)/4} \prod_{1\leq i<j\leq 2k} g(|t_i-t_j|)^{a_ia_j/2},  $$
where $g:\mathbb{R}_{\geq 0} \rightarrow \mathbb{R}$ is the function defined by
$$g(x) =\begin{cases}
\log q  & \text{if } x\leq \frac{1}{\log q} \text{ or } x\geq e^q \\
\frac{1}{x} & \text{if }   \frac{1}{\log q}\leq x\leq 10, \\
\log \log x & \text{if } 10\leq x\leq e^q.
\end{cases}
$$
Here the implied constant depends on $k$, $A$ and the $a_j$ but not on $q$ or the $t_j$.
\end{theorem}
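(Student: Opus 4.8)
The plan is to adapt Harper's treatment of $\int_0^T|\zeta(1/2+it)|^{2k}\,dt$ to the family of Dirichlet characters modulo $q$. First a remark on the hypothesis: taking $\chi$ to be the principal character $\chi_0\bmod q$, one has $L(s,\chi_0)=\zeta(s)\prod_{p\mid q}(1-p^{-s})$, and the finitely many extra Euler factors have no zeros with $\Re s>0$, so RH for $L(s,\chi_0)$ \emph{is} RH for $\zeta(s)$; I shall use the latter freely. Since $X_q^*\subseteq\{\chi\bmod q\}$, and for $\chi$ induced by $\chi^*$ mod $q^*<q$ one has $|L(1/2+it,\chi)|\le 2^{\omega(q)}|L(1/2+it,\chi^*)|$ with $2^{\omega(q)}=q^{o(1)}$, it is enough to bound the sum over all $\chi\bmod q$. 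The analytic conductor of $L(1/2+it_j,\chi)$ is $\ll q^{A+1}$, so by the analogue of \eqref{soundupper} for $L(s,\chi)$ under RH, applied with $x=q^{1/B}$ for a large constant $B=B(k,A,a_1,\dots,a_{2k})$ to be chosen, one gets for each $j$
$$\log|L(1/2+it_j,\chi)|\le\Re\sum_{p\le x}\frac{\chi(p)p^{-it_j}}{p^{1/2+1/\log x}}\cdot\frac{\log(x/p)}{\log x}+O(1),$$
the main error $O(\log q/\log x)=O(B)$ being absorbed for fixed $B$, while the prime-power terms contribute $O(1)$ after the character average (a set of $q^{o(1)}$ characters of order $\le 2$, for which the prime-square term need not oscillate, is removed separately, using that for these $\sum_\chi\prod_j|L(1/2+it_j,\chi)|^{a_j}\ll q^{o(1)}$ under the hypothesis). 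Writing $w(p)=p^{-1/\log x}\log(x/p)/\log x$ and $P(\chi)=\sum_{p\le x}\frac{w(p)}{\sqrt p}\sum_j a_j\Re(\chi(p)p^{-it_j})$, the task is reduced to
$$\frac1{\phi(q)}\sum_{\chi\bmod q}e^{P(\chi)}\ \ll\ (\log q)^{(a_1^2+\dots+a_{2k}^2)/4}\prod_{1\le i<j\le 2k}g(|t_i-t_j|)^{a_ia_j/2}.$$

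Next I would identify the right-hand side as an ``independence'' prediction. If the phases $\chi(p)$ were independent and uniform, then $\frac1{\phi(q)}\sum_\chi e^{P(\chi)}=\prod_{p\le x}I_0\!\left(\tfrac{w(p)}{\sqrt p}|A_p|\right)$ with $A_p=\sum_j a_j p^{it_j}$, and since $I_0(r)\le e^{r^2/4}$ and $|A_p|^2=\sum_{i,j}a_ia_j\cos((t_i-t_j)\log p)$, this product is $\le\exp\!\big(\tfrac14\sum_{i,j}a_ia_j\sum_{p\le x}\tfrac{w(p)^2\cos((t_i-t_j)\log p)}{p}\big)$. So the proof splits into two parts. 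The first is a Mertens-type estimate
$$\sum_{p\le x}\frac{w(p)^2\cos(u\log p)}{p}\ \le\ \log g(|u|)+O(1)\qquad(0\le|u|\le 2q^A),$$
which then yields exactly the claimed main term on separating the diagonal $i=j$ (where $g(0)=\log q$ produces $(\log q)^{\sum a_j^2/4}$) from the off-diagonal (which produces $\prod_{i<j}g(|t_i-t_j|)^{a_ia_j/2}$). The second part is to prove that the genuine character average really is bounded by this independence prediction, up to a constant factor --- i.e.\ without losing a power $(\log q)^\varepsilon$, which is exactly where Soundararajan's original argument is wasteful and Harper's refinement is needed.

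For the Mertens estimate I would split according to the size of $|u|$; the ranges $|u|\le 1/\log q$ and $1/\log q\le|u|\le 10$ follow from the prime number theorem by partial summation, and the delicate case is $|u|$ a large power of $x$. Here a ``resonance'' $\cos(u\log p)\approx 1$ can persist only for primes $p$ up to about $\log|u|$, whose reciprocals sum to $\log\log\log|u|+O(1)=\log g(|u|)+O(1)$, while the complementary oscillatory sum over $\log|u|<p\le x$ is $O(1)$; it is precisely this last estimate --- a prime sum running only up to $x=q^{1/B}$ while $|u|$ is a large power of $x$ --- that uses RH for $\zeta$, and is where I expect the present $g$ to improve on Munsch's. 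For the second part one follows Harper: partition the primes $p\le x$ into blocks $I_\ell=\{p:\ell<\log\log p\le\ell+1\}$ for $0\le\ell\le\mathcal J\asymp\log\log q$ (adjoining $p\le e$ to $I_0$), so that each block has variance $O(1)$ in $P=\sum_\ell P_\ell$, where $P_\ell(\chi)=\sum_{p\in I_\ell}\frac{w(p)}{\sqrt p}\sum_j a_j\Re(\chi(p)p^{-it_j})$. Fix a slowly growing threshold sequence $\nu_\ell$ (e.g.\ $\nu_\ell\asymp\mathcal J-\ell+1$, chosen so that $\sum_\ell\nu_\ell\max_{p\in I_\ell}\log p\ll e^{\mathcal J}\le\tfrac1B\log q<\log q$). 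For each $\chi$ let $\ell^*(\chi)$ be the least $\ell$ with $|P_\ell(\chi)|>\nu_\ell$, or $\infty$ if there is none, and split $\sum_{\chi}e^{P(\chi)}$ according to $\ell^*$. On $\{\ell^*=\infty\}$ every $e^{P_\ell}$ is, up to a negligible error, its Taylor polynomial of degree $K_\ell\asymp\nu_\ell$ (since $|P_\ell|\le\nu_\ell$); the product over $\ell$ is then a Dirichlet polynomial of length $\exp(\sum_\ell K_\ell\max_{p\in I_\ell}\log p)<q$, so the orthogonality $\frac1{\phi(q)}\sum_{\chi\bmod q}\chi(n)\overline{\chi(m)}=\mathbf{1}_{(nm,q)=1,\ n\equiv m\ (q)}$ leaves only the diagonal, which is exactly the independence prediction. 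On $\{\ell^*=L<\infty\}$ the blocks $\ell<L$ are truncated as before, the indicator $\mathbf{1}_{|P_L|>\nu_L}$ is controlled by the sharp moment bound $\frac1{\phi(q)}\sum_{\chi\bmod q}|P_L(\chi)|^{2m}\ll m!\,(\text{variance})^m$ (valid while $x^{2m}$ stays below the length budget $q$), giving it measure $\ll e^{-c\nu_L^2}$, and the blocks $\ell\ge L$ are bounded by a second-moment computation or by invoking Soundararajan's inequality at a shorter length; one arranges that the smallness $e^{-c\nu_L^2}$, summed over the $\asymp\log\log q$ possible $L$, is absorbed into the main term.

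\textbf{The main obstacle} is this last step: choosing the parameters $B$, the block thresholds $\nu_\ell$, and the truncation degrees $K_\ell$ simultaneously so that (i) every polynomial that occurs --- products across blocks, and the factors $|P_L|^{2m}$ --- has length below $q$, so that character orthogonality genuinely isolates the diagonal; (ii) all the Taylor tails are negligible on the events $|P_\ell|\le\nu_\ell$; and (iii) the contributions of the cases $\ell^*=L<\infty$, summed over $L$, are truly absorbed into the main term --- the whole point being that none of these steps may concede a factor $(\log q)^\varepsilon$. A secondary but genuinely delicate ingredient is the Mertens estimate for $|t_i-t_j|$ a large power of $q$, which is where RH for $\zeta$ enters and where the correlation function $g$ is sharpened.
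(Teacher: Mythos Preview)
Your overall plan is the paper's: Harper's block decomposition of the GRH prime sum, Taylor truncation on the good event, a high-moment penalty on the bad event, and the Mertens-type estimate you describe (the paper's Lemma~\ref{mertenstype}, where RH for $\zeta$ indeed enters only in the range $10\le |u|\le e^q$). Two points, however, need more than your sketch provides.

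First, on $\{\ell^*=L\}$ you intend to ``invoke Soundararajan's inequality at a shorter length'' to dispose of blocks $\ell\ge L$. That is what the paper does, but then the weights $w(p)=p^{-1/\log x}\log(x/p)/\log x$ in \emph{every} block change with the new cutoff. The paper therefore tracks a double-indexed family $G_{(i,j)}$ (block $i$, cutoff $q^{\beta_j}$) and defines its bad sets $\mathcal S(j)$ by requiring smallness of the first $j$ blocks simultaneously for \emph{all} cutoffs $j\le l\le\mathcal I$; a single fixed-weight $P_\ell$ does not interface correctly with this step. Relatedly, for the worst characters (first block already large --- the paper's $\mathcal S(0)$) no further shortening is available; the paper handles these by showing $|\mathcal S(0)|\ll qe^{-(\log\log q)^2}$ and combining with a separately proved crude moment bound (Proposition~\ref{crude_prop}) via Cauchy--Schwarz. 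Your sample thresholds $\nu_\ell\asymp\mathcal J-\ell+1$ would not survive this regime without such a device, since for small $L$ the saving $e^{-c\nu_L^2}$ does not beat the cost $e^{O(\log q/e^L)}$ of re-applying the inequality.

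Second, the claim that ``the prime-power terms contribute $O(1)$'' after removing quadratic characters is not correct as stated. For non-quadratic $\chi$, GRH for $L(s,\chi^2)$ lets one truncate $\sum_{p\le x^{1/2}}\chi(p^2)h(p^2)/p$ only down to $p\le\log q$, and the remaining head is trivially bounded by $\sum_{p\le\log q}1/p\asymp\log\log\log q$, which is not $O(1)$; exponentiated, it costs an unbounded factor. The paper's Lemma~\ref{important} repairs this by running a second, miniature Harper-style large-value decomposition on the short prime-square sum $\sum_{p\le\log q}\chi(p^2)h(p^2)/p$ itself.
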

Recently, Curran \cite{curran2023correlation} has shown essentially the same type of upper bound for shifted moments of the Riemann zeta function, moreover his method of proof similar to ours. His theorem generalises and improves upon previous work by Chandee \cite{chandee2011correlation} and Ng, Shen and Wong \cite{ng2022shifted}.

Let us proceed to the topic of our second theorem. In \cite{munschshifted} Munsch used (\ref{munsch1}) to obtain (conditional) upper bounds on integer moments of $\theta$ functions.
For a Dirichlet character $\chi$ mod $q$ define $\kappa=\kappa(\chi)=(1-\chi(-1))/2$, i.e. $\kappa=1$ if $\chi$ is odd and $\kappa=0$ if $\chi$ is even. The $\theta$ function corresponding to $\chi$ is defined as 
$$\theta(x,\chi)=\sum_{n=1}^{\infty} \chi(n) n^{\kappa} e^{-\pi n^2 x/q}.$$
Let $X_q^+$ and $X_q^-$ denote the set of even and odd primitive Dirichlet characters respectively. In \cite{munschshifted} Munsch showed that for each fixed positive integer $k$ and $\epsilon>0$ one has
$$S_{2k}^+(q):=\sum_{\chi \in X_q^+ } |\theta(1,\chi)|^{2k}\ll_{k,\epsilon} \phi(q)q^{k/2}(\log q)^{(k-1)^2+\epsilon},$$
and
$$S_{2k}^-(q):=\sum_{\chi \in X_q^-} |\theta(1,\chi)|^{2k}\ll_{k, \epsilon} \phi(q)q^{3k/2}(\log q)^{(k-1)^2+\epsilon}.$$
In our next theorem we will remove the $\epsilon$ from the exponent using Theorem \ref{t1}. Moreover our result will hold for all real $k>2$, not just for integers.

\begin{theorem}
\label{t2}
Let $q>1$ be a positive integer,  and assume that for any Dirichlet character $\chi$ mod $q$, the corresponding $L$-function $L(s,\chi)$ satisfies the Riemann Hypothesis. Let $X_q^+$ and $X_q^-$ denote the set of even and odd primitive characters respectively. Let $k>2$ be a real number. Then
$$S_{2k}^+(q):=\sum_{\chi \in X_q^+ } |\theta(1,\chi)|^{2k}\ll_{k} \phi(q)q^{k/2}(\log q)^{(k-1)^2},$$
and
$$S_{2k}^-(q):=\sum_{\chi \in X_q^-} |\theta(1,\chi)|^{2k}\ll_{k} \phi(q)q^{3k/2}(\log q)^{(k-1)^2}.$$
\end{theorem}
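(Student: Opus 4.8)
The plan is to reduce the statement to Theorem~\ref{t1} through the classical integral representation of the theta function. Writing $\sigma_0=\tfrac14+\tfrac{\kappa}{2}$ and applying the Mellin transform $e^{-y}=\tfrac{1}{2\pi i}\int_{(c)}\Gamma(s)y^{-s}\,ds$ termwise, one gets, for $\chi$ primitive and non‑principal,
\[
\theta(1,\chi)=\frac{1}{2\pi i}\int_{(c)}\Gamma(s)\Big(\frac{\pi}{q}\Big)^{-s}L(2s-\kappa,\chi)\,ds ;
\]
moving the contour to the line $\Re s=\sigma_0$ (legitimate since $L(s,\chi)$ is entire and $\Gamma$ decays exponentially in vertical strips) and substituting $\tau=2\,\Im s$ yields
\[
\theta(1,\chi)=q^{\sigma_0}\int_{-\infty}^{\infty}w(\tau)\,q^{i\tau/2}\,L\!\big(\tfrac12+i\tau,\chi\big)\,d\tau,\qquad
w(\tau):=\frac{1}{4\pi}\,\pi^{-\sigma_0-i\tau/2}\,\Gamma\!\big(\sigma_0+\tfrac{i\tau}{2}\big),
\]
where $|w(\tau)|\ll(1+|\tau|)^{\sigma_0-1/2}e^{-\pi|\tau|/4}$ decays rapidly. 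Since $2\sigma_0=\tfrac12+\kappa$, the factor $q^{2k\sigma_0}=q^{k(1/2+\kappa)}$ — which is exactly the predicted power of $q$ in both bounds — drops out the moment we raise to the $2k$-th power, so everything reduces to estimating the power of $\log q$.

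For integer $k$ I would expand $|\theta(1,\chi)|^{2k}=\theta(1,\chi)^{k}\,\overline{\theta(1,\chi)}^{\,k}$ into a $2k$-fold integral over $\tau_1,\dots,\tau_{2k}$, sum over $\chi\in X_q^{\pm}$, interchange the (absolutely convergent) sum and integral, pass to absolute values, and enlarge $X_q^{\pm}$ to $X_q^{*}$, obtaining
\[
S_{2k}^{\pm}(q)\ \le\ q^{k(1/2+\kappa)}\!\int_{\mathbb{R}^{2k}}\prod_{j=1}^{2k}|w(\tau_j)|\cdot\Big(\sum_{\chi\in X_q^{*}}\prod_{j=1}^{2k}\big|L(\tfrac12+i\tau_j,\chi)\big|\Big)\,d\tau_1\cdots d\tau_{2k}.
\]
After truncating to $|\tau_j|\le\log q$ (the tail being negligible by the decay of $w$ together with a convexity bound for $L$), Theorem~\ref{t1} with all $a_j=1$ bounds the inner sum by $\phi(q)(\log q)^{k/2}\prod_{1\le i<j\le 2k}g(|\tau_i-\tau_j|)^{1/2}$. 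Hence the whole problem comes down to the multidimensional estimate
\[
I:=\int_{\mathbb{R}^{2k}}\prod_{j=1}^{2k}|w(\tau_j)|\prod_{1\le i<j\le 2k}g(|\tau_i-\tau_j|)^{1/2}\,d\tau_1\cdots d\tau_{2k}\ \ll_{k}\ (\log q)^{(k-1)^2-k/2},
\]
which combined with the above gives $S_{2k}^{\pm}(q)\ll\phi(q)q^{k(1/2+\kappa)}(\log q)^{(k-1)^2}$.

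The estimate for $I$ is the crux, and I expect it to be the main obstacle. Its exponent is attained precisely by the ``fully clustered'' configuration in which all $\tau_j$ lie in a common interval of length $1/\log q$, which (because of the $|w(\tau_j)|$ weights) sits $O(1)$-close to the origin: there every one of the $\binom{2k}{2}$ factors equals $g(0)=\log q$, the configuration has Lebesgue measure $\asymp(\log q)^{-(2k-1)}$, and the $|w(\tau_j)|$ are bounded, contributing $(\log q)^{\binom{2k}{2}/2-(2k-1)}=(\log q)^{(k-1)^2-k/2}$. I would show that no other region contributes more, by decomposing $\mathbb{R}^{2k}$ according to the clustering pattern of the points (equivalently, by a dyadic decomposition of the $|\tau_i-\tau_j|$) and integrating out one variable at a time via the elementary bounds, uniform in $a$,
\[
\int_{\mathbb{R}}|w(u)|\,g(|u-a|)^{m/2}\,du\ \ll_m\
\begin{cases}
1,& 0\le m\le1,\\
\log\log q,& m=2,\\
(\log q)^{m/2-1},& m\ge3.
\end{cases}
\]
A point needing care is that for pairs with $|\tau_i-\tau_j|\ge10$ the factor $g$ is only of size $(\log\log\log q)^{1/2}$, so such factors must be absorbed into the exponential decay of $w$ (which forces one of $|\tau_i|,|\tau_j|$ to be large) rather than producing spurious losses; tracking this carefully is what keeps the bound sharp. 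The hypothesis $k>2$ enters exactly here: it is equivalent to $(k-1)^2-k/2>0$, which is what makes the fully clustered configuration dominate over the ``fully separated'' one (all $\tau_j$ at mutual distances $\asymp1$, which contributes only $\asymp1$).

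For general real $k>2$ the same scheme works, but the non‑integral part of $2k$ must be absorbed using the flexibility in Theorem~\ref{t1} (which allows real exponents $a_j$). Writing $k=n+\beta$ with $n=\lfloor k\rfloor\ge2$, one expands $|\theta(1,\chi)|^{2n}$ as above and couples it — through Hölder's inequality, applied both inside the $\tau$-integral and on the sum over $\chi$ — with the leftover factor $|\theta(1,\chi)|^{2\beta}$, estimated via $|L(\tfrac12+i\tau,\chi)|^{\beta}$-type contributions, so that Theorem~\ref{t1} is ultimately invoked with a slightly larger even number of factors, some carrying the non‑integral exponent $\beta$. This produces a multidimensional integral of the same shape as $I$ but with a fractional weighting of the pairs, to which the identical clustering analysis applies; bookkeeping the exponents then reproduces $(\log q)^{(k-1)^2}$ for all real $k>2$. (Alternatively one may run Harper's iteration directly on $\log|\theta(1,\chi)|$ through its Dirichlet polynomial, the viewpoint of the companion character‑sum result.) As in Harper's treatment of $\zeta$, the only genuine gain over Munsch's argument is that Theorem~\ref{t1} carries no $(\log q)^{\epsilon}$, and so neither do the bounds for $S_{2k}^{\pm}(q)$.
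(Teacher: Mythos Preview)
Your integer-$k$ reduction --- expand $|\theta|^{2k}$ as a $2k$-fold integral, apply Theorem~\ref{t1} with all $a_j=1$, then bound the resulting integral by a clustering analysis --- is exactly the route the paper sketches in its overview, and your identification of the fully clustered configuration as the main term is correct. One quibble: the ``integrate out one variable at a time'' step via $\int|w(u)|\,g(|u-a|)^{m/2}\,du$ is not how the estimate actually goes, since integrating out $\tau_{2k}$ produces $\int|w(\tau_{2k})|\prod_{i<2k}g(|\tau_{2k}-\tau_i|)^{1/2}\,d\tau_{2k}$ with \emph{several} reference points, not a single $a$. What one does (and what the paper does) is dyadically partition the region according to the mutual distances, bound the integrand by its sup on each box, and multiply by the volume.

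The genuine gap is the non-integer case. Pulling out $2n=2\lfloor k\rfloor$ copies and H\"oldering the leftover $(\int|L||w|)^{2\beta}$ fails whenever $2\beta<1$, i.e.\ whenever $k-\lfloor k\rfloor<\tfrac12$, because $(\int f)^{p}\le|\cdot|^{p-1}\int f^{p}$ requires $p\ge1$; and the alternative of applying H\"older on the $\chi$-sum is circular, since it reduces to moments of $|\theta|$ at other exponents. The paper's device is to pull out exactly \emph{three} copies regardless of $k$,
\[
\Big(\int|L||w|\Big)^{2k}=\Big(\int|L||w|\Big)^{3}\cdot\Big(\int|L||w|\Big)^{2k-3},
\]
so that the residual exponent $2k-3>1$ for every real $k>2$. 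After a dyadic partition of the inner integral in $|t_1-u|$, H\"older yields $\int|L|^{2k-3}$, and Theorem~\ref{t1} is invoked with four exponents $a_1=a_2=a_3=1$, $a_4=2k-3$, giving $\tfrac14\sum a_j^2=k^2-3k+3\le(k-1)^2$ precisely when $k\ge2$. Two copies would give $\tfrac14(1+1+(2k-2)^2)=k^2-2k+\tfrac32>(k-1)^2$, so three is the minimum that works; four already needs $2k\ge4$ and gains nothing. This ``three copies plus dyadic H\"older'' step is the concrete idea your sketch is missing.
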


These upper bounds are conjectured to be sharp up to a constant. In fact, when $k$ is an integer and $q$ is a prime, matching lower bounds were proven in \cite{munsch2016upper}. Our method breaks down when $k\leq 2$. The $k=2$ case has been studied before in \cite{louboutin2013second}, and an asymptotic formula was shown there. When $1<k<2$ it is expected that the same type of upper bound holds. When $0<k\leq 1$ and $q$ is a prime, Harper has recently shown (unconditionally) that 
\begin{equation*}
    S_{2k}^+(q)\ll \frac{\phi(q)q^{k/2}}{(1+(1-k)\sqrt{\log\log q} )^{k} },
\end{equation*}
and the same type of upper bound holds for $S_{2k}^-(q)$ with $q^{k/2}$ replaced by $q^{3k/2}$ (see Corollary 2 in \cite{harper2023typical}).

The reason why these upper bounds take this shape is related to the fact that character sums can be modeled by random multiplicative functions, the moments of which have been studied extensively by Harper in \cite{harper2020moments} and \cite{harper2020moments1}.

When $\chi$ is an even primitive character, the quantity $\theta(1,\chi)=\sum_{n=1}^{\infty} \chi(n) e^{-\pi n^2/q}$ behaves like $\approx \sum_{n\leq q^{1/2}} \chi(n)$. This is because when $1\leq n\leq q^{1/2}$, then $e^{-\pi n^2/q}\asymp 1$, and when $n>q^{1/2}$, the weight $e^{-\pi n^2/q}$ quickly decays to $0$. Therefore we expect that the character sum moment $\sum_{\chi\in X_q^*} \bigg|\sum_{n\leq q^{1/2} } \chi(n)\bigg|^{2k}$ can be upper bounded in a similar manner as $S^+_{2k}(q)$. This turns out to be true, moreover our methods are general enough that we do not need to restrict ourselves to sum up to $q^{1/2}$. We may consider the more general quantity
$$S_k(q,y):=\sum_{\chi\in X_q^*}\bigg|\sum_{n\leq y} \chi(n)\bigg|^{2k}.$$ 
Clearly it is enough to consider the case $2\leq y\leq q$. Theorem \ref{t2} suggests, when $k>2$ this quantity should be 
$$\ll \phi(q) y^k(\log y)^{(k-1)^2}.$$
We will prove this bound holds under GRH when $k>2$.
When $q^{1/2}\leq y\leq q$ we can go slightly further and improve upon the logarithmic term. The Poisson summation formula for character sums suggests the relation $\big| \sum_{n\leq y} \chi(n)\big|\approx \frac{y }{q^{1/2} } \big| \sum_{n\leq q/y} \bar\chi(n)\big|$. This allows us to replace the term $\log y$ with $\log 2q/y$ when  $q^{1/2}\leq y\leq q$.

\begin{theorem}
\label{t3}
Let $k>2$ be a fixed real number and $q$ a large integer. Assume that for any Dirichlet character $\chi$ mod $q$, the corresponding $L$-function $L(s,\chi)$ satisfies the Riemann Hypothesis. If $2\leq y\leq q^{1/2}$, then
$$S_k(q,y)\ll_k \phi(q)y^k(\log y)^{(k-1)^2}.$$
Moreover, if $q^{1/2}<y\leq q$, we have
$$S_k(q,y)\ll_k \phi(q)y^k\Big(\log \frac{2q}{y}\Big)^{(k-1)^2}.$$
\end{theorem}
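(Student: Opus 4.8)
The plan is to reduce the range $q^{1/2}<y\le q$ to the range $2\le y\le q^{1/2}$ by duality, and then to run, for the latter, the argument used to prove Theorem \ref{t2}; the one essential novelty is that the contour will be pushed a distance $1/\log y$ to the right of the critical line, so that it is $\log y$, rather than $\log q$, that governs the size of the relevant Euler product.

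Suppose first that $q^{1/2}<y\le q$. Poisson summation---equivalently, the P\'olya Fourier expansion of an incomplete character sum---expresses $\sum_{n\le y}\chi(n)$ as $\tau(\chi)$ (of modulus $q^{1/2}$) times a dual exponential sum; truncating the latter at $q/y$, its low part is $\ll (y/q^{1/2})\bigl|\sum_{m\le q/y}\bar\chi(m)\bigr|$ in modulus, while its high part, in which the relevant exponential oscillates and $1/m$ decays, contributes---after raising to the $2k$-th power and summing over $\chi$---a quantity of the same order as, or smaller than, the bound asserted by the theorem. Hence $S_k(q,y)\ll (y/q^{1/2})^{2k}\,S_k(q,q/y)+\phi(q)y^k(\log(2q/y))^{(k-1)^2}$, and since $q/y<q^{1/2}$ the first range, applied to the sum of length $q/y$, gives $S_k(q,q/y)\ll\phi(q)(q/y)^k(\log(2q/y))^{(k-1)^2}$; substituting yields the claim, the extra $2$ in $\log(2q/y)$ harmlessly covering the degenerate case in which $q/y$ is bounded.

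Now let $2\le y\le q^{1/2}$, and assume $y$ exceeds a fixed constant (otherwise the sum is $O(1)$ and the bound is trivial). Applying the truncated Perron formula and moving the contour from $\Re s=1+1/\log y$ to $\Re s=1/2+1/\log y$---legitimate because $L(s,\chi)$ is entire for primitive non-principal $\chi$, while GRH, or even convexity, bounds $L$ on the two connecting horizontal segments so that, with truncation height $T=q^{10}$, these segments and the Perron error are $O(1)$---and using $y^{1/2+1/\log y}\asymp y^{1/2}$, we obtain
\[
\Bigl|\sum_{n\le y}\chi(n)\Bigr|\ \ll\ y^{1/2}\int_{-q^{10}}^{q^{10}}\frac{\bigl|L\bigl(\tfrac12+\tfrac1{\log y}+it,\chi\bigr)\bigr|}{1+|t|}\,dt\ +\ 1.
\]
Working on the line $\Re s=\tfrac12+\tfrac1{\log y}$ rather than on the critical line is exactly what brings in $\log y$ in place of $\log q$: there Soundararajan's inequality \eqref{soundupper} controls $\log|L(s,\chi)|$ by a Dirichlet polynomial which, owing to the extra weight $n^{-1/\log y}$, is essentially supported on $n\le y^{O(1)}$, so that Theorem \ref{t1} holds on this line with $\log q$ replaced throughout by $\log y$---the correlation function being that of Theorem \ref{t1} read on the scale $\log y$, and in particular equal to $\log y$ at $0$. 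Granting this, one raises the displayed inequality to the power $2k$, expands $|\,\cdot\,|^{2k}$ as a $2k$-fold integral in shifts $t_1,\dots,t_{2k}$ (for non-integer $k$ using instead the device already in the proof of Theorem \ref{t2}), sums over $\chi\in X_q^*$, and inserts this off-line form of Theorem \ref{t1}, arriving at
\[
S_k(q,y)\ \ll\ \phi(q)\,y^{k}(\log y)^{k/2}\!\!\int_{[-q^{10},q^{10}]^{2k}}\ \prod_{j=1}^{2k}\frac{1}{1+|t_j|}\ \prod_{1\le i<j\le 2k}g\bigl(|t_i-t_j|\bigr)^{1/2}\,dt\ +\ \phi(q),
\]
with $g$ read on the scale $\log y$. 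The multiple integral is estimated exactly as for Theorem \ref{t2}: for $k>2$ it is dominated by configurations in which all $2k$ shifts lie within $1/\log y$ of one another (summing over the dyadic height of the common cluster costs only $O(1)$, thanks to the weights $1/(1+|t_j|)$), and such a configuration contributes $\asymp (\log y)^{-(2k-1)}(\log y)^{\binom{2k}{2}/2}=(\log y)^{k^2-5k/2+1}$. Altogether $S_k(q,y)\ll\phi(q)y^k(\log y)^{k/2+k^2-5k/2+1}=\phi(q)y^k(\log y)^{(k-1)^2}$.

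The main obstacle is the off-line shifted-moment estimate: one must check that the proof of Theorem \ref{t1}---Soundararajan's inequality followed by Harper's analysis of the large values of the resulting Dirichlet polynomial---goes through with $\Re s=\tfrac12+\tfrac1{\log y}$ in place of the critical line and with $\log y$ in place of $\log q$, and in particular uniformly for shifts ranging over an interval as long as $[-q^{10},q^{10}]$; in Theorem \ref{t2} the gamma factor confines all shifts to a bounded range, so this uniformity is genuinely new. A secondary point is that the sharp cut-off, unlike the gamma-smoothed weight underlying $\theta(1,\chi)$, produces only the slowly decaying factor $1/(1+|t|)$ in $t$, so one must verify that large $|t|$ contribute negligibly to the multiple integral; this is true because $g$ is bounded by $\log\log|x|$ once the shifts are well separated, but it requires some care. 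Finally, the clustering computation converges, and returns the stated exponent, precisely when $k>2$---which is exactly the range of the theorem, and the reason the method stops at $k=2$.
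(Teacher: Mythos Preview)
Your overall plan---an off-line version of Theorem~\ref{t1} on the line $\Re s=\tfrac12+1/\log y$, combined with Perron---is exactly the paper's approach (the paper records the off-line estimate as Theorem~\ref{t4}). But there is a genuine gap in your treatment of the first range. With the sharp cutoff $\mathbf{1}_{n\le y}$, the Perron weight on the vertical line is only $1/(1+|t|)$, and your claim that the contribution of large $|t|$ is negligible ``because $g$ is bounded by $\log\log|x|$ once the shifts are well separated'' is not correct: when the $2k$ shifts are spread over $[-T,T]$ the product $\prod_j(1+|t_j|)^{-1}$ integrates to $\asymp(\log T)^{2k}$, and this factor---$(\log q)^{2k}$ for your choice $T=q^{10}$, and still $(\log y)^{2k}$ even with the minimal $T\asymp y$---cannot be absorbed into $(\log y)^{(k-1)^2}$ when $2<k<2+\sqrt3$. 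The paper flags exactly this obstruction in the overview (\S2.3) and fixes it by replacing the sharp cutoff with a continuous piecewise-linear weight $f(n)$; the Mellin transform of $f$ then decays like $1/t^2$, the resulting integral is absolutely convergent, and the sharp bound follows for the weighted sum $\sum_n f(n)\chi(n)$ (Lemma~\ref{fweight}). A separate crude moment bound (Lemma~\ref{fdiff}, obtained via sharp Perron with $T=y$) together with Cauchy--Schwarz then shows that the weighted and unweighted $2k$-th moments agree to the required accuracy.

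For the second range the paper does not reduce to the first via the P\'olya expansion as you propose; instead it repeats the smoothed Perron argument, shifts to $\Re s=\tfrac12-1/\log(q/y)$, and invokes the functional equation $|L(s,\chi)|\ll(q|t|)^{1/2-\sigma}|L(1-s,\bar\chi)|$ to return to the line treated in the first part. A P\'olya/Montgomery--Vaughan argument does appear, but only as the auxiliary crude bound (Proposition~\ref{prop4}) needed to remove the smoothing. Your proposed direct duality is morally close, but your assertion that the ``high part'' of the Fourier expansion contributes acceptably is not justified, and in any case your reduction lands on the first range where your argument already has the gap above.
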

Note that in his recent work \cite{harper2023typical}, when $0\leq k\leq 1$ and $q$ is prime, Harper gave the unconditional upper bound
\begin{equation*}
    S_k(q,y)\ll \frac{\phi(q)y^k}{(1+(1-k)\sqrt{10\log\log L} )^{k} },
\end{equation*}
where $L=\min\{ y, q/y\}$. This upper bound is conjecturally sharp up to a constant, however what happens when $1<k<2$ is still an open problem.

Finally, we mention two unconditional results in the direction of Theorem \ref{t3}. Firstly, Montgomery and Vaughan \cite{montgomery1979mean} showed
that for any real $k>0$ and $2\leq y\leq q$ one has
\begin{equation*}
    S_k(q,y)\ll_{k} \phi(q)q^k.
\end{equation*}
In fact their statement is stronger than this, we refer the interested reader to Theorem 1 of \cite{montgomery1979mean}. Note that this proves Theorem \ref{t3} when $y\gg q$, however loses its strength when $y$ gets smaller.

Secondly, when $q$ is prime and $k$ is a positive integer, Cochrane and Zheng \cite{cochrane1998high} showed that for any $\epsilon>0$ and $2\leq y\leq q$ one has
\begin{equation*}
    S_k(q,y)\ll_{k,\epsilon} \phi(q) \big(q^{k-1+\epsilon}+y^kq^{\epsilon}\big).
\end{equation*}
When $y\gg q^{1-1/k}$, this is only worse than Theorem \ref{t3} by a factor of $q^{\epsilon}$.
\section{Overview of the proofs}
\noindent In this section we give a quick overview of the proof of each of the three theorems.

\subsection{Theorem 1}
Let $\chi$ be a primitive Dirichlet character mod $q$. We start with Lemma 2.3. of \cite{munschshifted}, which is a generalisation of \eqref{soundupper}, and roughly speaking can be written as
$$ \log|L(1/2+it,\chi)| \lessapprox \Re \sum_{p\leq x}\frac{\chi(p)}{p^{1/2+it} }+\frac{\log q}{\log x},$$
for any $2\leq x\leq t^2$ and $t=q^{O(1)}$. We may write
\begin{equation}
\label{explain}    
\begin{split}
 \big| L\big(1/2+it_1,\chi \big) \big|^{a_1} \cdots \big| L\big(1/2+it_{2k},\chi \big) \big|^{a_{2k}} &=\exp\bigg(\Re \sum_{j=1}^{2k}a_j \log |L(1/2+it_j, \chi)| \bigg) \\
 & \lessapprox \exp\bigg(\Re \sum_{j=1}^{2k}a_j \sum_{p\leq x} \frac{\chi(p) }{p^{1/2+it_j} }+O\Big(\frac{\log q}{\log x}\Big) \bigg) \\
 & = \exp^2\bigg(\Re \sum_{p\leq x} \frac{h(p)\chi(p) }{p^{1/2} }+O\Big(\frac{\log q}{\log x}\Big) \bigg), \\
\end{split}
\end{equation}
where $h(p)=\frac{1}{2}(a_1 p^{-it_1}+\cdots +a_{2k} p^{-it_{2k} })$. Choose parameters $q^{1/(\log \log q)^2}=x_0<x_1<\ldots <x_{\mathcal{I}}=x^{\epsilon}$, where $\epsilon$ is a small but fixed constant and $x_{i+1}=x_i^{20}$. For any $1\leq i\leq \mathcal{I}$ and $\chi$ mod $q$ let us define
\begin{equation*}
   D(i,\chi):= \Re \sum_{x_{i-1}<p\leq x_i } \frac{h(p)\chi(p) }{p^{1/2}}.
\end{equation*} 
Firstly, we handle the characters $\chi$, for which $D(i,\chi)$ is not too large for any $1\leq i\leq \mathcal{I}$, in other words, the corresponding Dirichlet polynomial behaves well. More precisely, let $\alpha_i=\big(\frac{\log q}{\log x_i}\big)^{3/4}$ and let
\begin{equation*}
    \mathcal{T}=\{\chi\in X_q:|D(i,\chi)|\leq \alpha_i  \text{ for each } 1\leq i\leq \mathcal{I} \}.
\end{equation*}
For any $\chi\in \mathcal{T}$, we choose  $x=x_{\mathcal{I}}$ in \eqref{explain}. With this choice we have $\log q/\log x \ll 1$. On the other hand, as $D(i,\chi)\leq \alpha_i$, we may truncate the infinite series expansion
\begin{equation*}
    e^{D(i,\chi)}=\sum_{n=0}^{\infty}\frac{D(i,\chi)^n}{n!}
\end{equation*}
at $n=\lfloor 100\alpha_i\rfloor$ with a negligible error term. So we may write
\begin{equation*}
\sum_{\chi\in \mathcal{T}} \exp^2\bigg(\Re \sum_{p\leq x} \frac{h(p)\chi(p) }{p^{1/2} }+\frac{\log q}{\log x} \bigg)
\lessapprox \sum_{\chi \in \mathcal{T}} \prod_{i=1}^{\mathcal{I}} \bigg( \sum_{0\leq n\leq 100\alpha_i} \frac{D(i,\chi)^n}{n!}\bigg)^2\leq  \sum_{\chi \in X_q} \prod_{i=1}^{\mathcal{I}} \bigg( \sum_{0\leq n\leq 100\alpha_i} \frac{D(i,\chi)^n}{n!}\bigg)^2
\end{equation*}
The crucial point is that the expression on the right hand side is a Dirichlet polynomial, whose length is less than $q$ by the choice of $\alpha_i$. We now may swap the order of summation and use the orthogonality of characters to obtain significant cancellation. After a lengthy calculation, at the end of the argument we need to upper bound expressions of the shape
$$\sum_{p\leq x} \frac{|h(p)|^2}{p}=\sum_{p\leq x} \frac{1}{p} \bigg(\frac{1}{4}(a_1^2+\cdots +a_{2k}^2 ) +\sum_{1\leq i<j\leq 2k} \frac{a_ia_j}{2}\cos( |t_i-t_j| \log p)\bigg).$$
We can use Mertens's estimate and properties of the zeta function to get the bound
$$\sum_{p\leq q}\frac{\cos(\alpha \log p )}{p}\leq \log g(\alpha)+O(1),$$
which allows us to handle the contribution from $\chi\in \mathcal{T}$ (here recall the definition of $g(\alpha)$ from the statement of Theorem \ref{t1}).

If $\chi \not\in \mathcal{T}$, then there is some $1\leq i\leq \mathcal{I}$, for which $D(i,\chi)$ is large, but for each $1\leq j\leq i-1$, $D(j, \chi)$ is small. In this case we repeat a similar argument, but choose our cutoff parameter at $x=x_{i-1}$, so our Dirichlet polynomial behaves well. Now the problem is that $\frac{\log q}{\log x}$ is not a bounded quantity anymore. however we can obtain extra saving using the fact that $D(i,\chi)$ is large, which heuristically should only happen for few $\chi$.
\subsection{Theorem 2}

We now outline how to deduce Theorem \ref{t2} from Theorem \ref{t1}. By the theory of Mellin transforms, for any even primitive $\chi\in X_q^+$ and $c>0$ we can write
$$\theta(1,\chi)=\frac{1}{2\pi i} \int_{(c)} L(2s, \chi) \bigg(\frac{q}{\pi}\bigg)^{s} \Gamma(s)ds,$$
 where $(c)$ denotes the straight line contour from $c-i\infty$ to $c+i\infty$. The integral is absolutely convergent because of the exponential decay of $\Gamma(s)$ as $\Im s\to \infty$. We shift the line of integration to $c=1/4$. 
 
 For the moment, assume that $2k$ is an integer, in which case we may write the $2k$-th power of an integral as a $2k$-fold integral, so we obtain
$$\sum_{\chi \in X_q^+} |\theta(1,\chi)|^{2k}\ll q^{k/2} \int_{\mathbb{R}^{2k}}\sum_{\chi\in X_q^*}  \prod_{j=1}^{2k}|L(1/2+it_j,\chi)\Gamma(1/4+it_j/2)| dt_1\ldots dt_{2k}.$$

Now the exponential decay of $\Gamma(1/4+it/2)$ allows us to restrict our attention to the region where the $t_j$ are small, say bounded. Moreover, by Theorem \ref{t1} we know that $\sum_{\chi\in X_q^*} \prod_{j=1}^{2k} |L(1/2+it_j,\chi)| $ is small unless the $t_j$ are close to each other. The most important case is when the $t_j$ are at most $1/\log q$ apart, in which case the expression inside the integral can be as large as $\phi(q)(\log q)^{k^2}$. However the region in which this happens (assuming the $t_j$ are bounded) has volume $\ll 1/(\log q)^{2k-1}$ which gives us the main contribution of size $(\log q)^{k^2}/(\log q)^{2k-1}=(\log q)^{(k-1)^2}$. We can handle the remaining region by appropriately splitting up the integral into regions and apply Theorem 1 separately on the integrand in each region. 

This argument works if $2k$ is an integer, however we want a proof for all real $k>2$. We certainly have
\begin{equation*}
    \sum_{\chi \in X_q^+} |\theta(1,\chi)|^{2k}\ll q^{k/2}\sum_{\chi \in X_q^*} \bigg( \int_{-\infty}^{\infty} |L(1/2+it,\chi)\Gamma(1/4+it/2)|dt\bigg)^{2k},
\end{equation*}
and we would like to pull the $2k$-th power inside the integral using Hölder's inequality to arrive at similar expressions to the integer case. Applying  Hölder straight away is not sufficient, so
following a strategy outlined in a paper by Harper (see \cite{harper2020moments}, page 8), we pull out 3 copies of the $2k$-th power (note $k>2$)
\begin{align*}
    & \sum_{\chi \in X_q^*} \bigg( \int_{-\infty}^{\infty} |L(1/2+it,\chi)\Gamma(1/4+it/2)|dt\bigg)^{2k}\\
    = & \sum_{\chi \in X_q^*} \bigg(\int_{-\infty}^{\infty} |L(1/2+it,\chi)\Gamma(1/4+it/2)|dt\bigg)^3\cdot \bigg( \int_{-\infty}^{\infty} |L(1/2+it,\chi)\Gamma(1/4+it/2)|dt\bigg)^{2k-3},
\end{align*}
and apply Hölder to the $(2k-3)$-th power. Eventually, we will need to obtain a suitable upper bound on
\begin{equation}
\label{hellom}
    \int_{[0,1]^4} \sum_{\chi\in X_q^*} | L(1/2+it_1,\chi) L(1/2+it_2,\chi) L(1/2+it_3,\chi) L(1/2+iu,\chi)^{2k-3}|d\mathbf{t}du,
\end{equation}
which is possible by applying Theorem \ref{t1} with $a_1=a_2=a_3=1$ and $a_4=2k-3$. We have $\frac{1}{4}(a_1^2+a_2^2+a_3^2+a_4^2)=k^2-3k+3$, so the best upper bound we can get on \eqref{hellom} is $\phi(q)(\log q)^{k^2-3k+3}$. This means that unless $k^2-3k+3\leq (k-1)^2$, i.e. $k\geq 2$ we cannot use Theorem 1 to prove Theorem 2. On the other hand, when $k>2$ this argument works, by breaking up the integral into regions where the distances $|t_1-u|$, $|t_2-u|$ and $|t_3-u|$ do not change by more than a constant factor and applying Theorem \ref{t1} on the integrand for each region. Summing up all the contributions, we obtain Theorem 2.

Finally, we remark that pulling out only 2 copies would mean that the best bound we can get on \eqref{hellom} is $\phi(q)(\log q)^{\frac{1}{4}(1+1+(2k-2)^2)} =\phi(q)(\log q)^{k^2-2k+3/2}$, but $k^2-2k+3/2> (k-1)^2$. So we do need to pull out 3 copies at least to make the argument work. On the other hand, pulling out 4 copies would already require $2k\geq 4$, so it would not improve the range of $k$, for which the argument works.
\subsection{Theorem 3}
We now turn to the proof of Theorem 3. Let $\chi$ be a character mod $q$. A similar approach to that of Theorem 2 starts with writing the character sum as a Perron integral
$$\sum_{n\leq y}\chi(n)=\int_{1+1/\log y-iT}^{1+1/\log y+iT}L(s,\chi)\frac{y^s}{s}ds+O\Big(\frac{y}{T}\log y\Big).$$
We shift the line of integration to $\Re s=1/2+1/ \log y$ instead of $\Re s=1/2$. This allows us to get better upper bounds on $|L(s,\chi)|$ while  $y^s\ll y^{1/2}$, so there we get a $\log y$ term is Theorem 3 instead of $\log q$. This is because on the line $\Re s=1/2+1/\log y$ we are able to deduce a slightly stronger version of Theorem 1, since the approximating Dirichlet polynomials may be taken to be shorter. Using the residue theorem and ignoring the contribution from the horizontal integrals we get 
$$\sum_{\chi \in X_q^*} \bigg| \sum_{n\leq y}\chi(n)\bigg|^{2k}\ll y^k \sum_{\chi \in X_q^*} \bigg( \int_{1/2+1/\log y-iT}^{1/2+1/\log y+iT}\frac{|L(s,\chi)|}{|s|} |ds|\bigg)^{2k}+\phi(q)\cdot \Big(\frac{y}{T}\log y\Big)^{2k}.$$
Because of the error term, we need essentially $T\geq y^{1/2}$. But then the $1/|s|$ term in the integral will contribute $\gg (\log y)^{2k}$, which is not acceptable if $2k>(k-1)^2$, i.e. $k<\sqrt3+2$. We want Theorem 3 for all $k>2$ so we need to tweak our argument. Firstly we will consider the weighted sum $\sum_{n}f(n)\chi(n)$, where $f$ is a continuous linear weight function defined as follows. Let $y_0=y-y/(\log y)^C$ for some large $C$. Define
\begin{equation}
f(n)=
\begin{cases}
    1                     & \text{ if } 1\leq n \leq y_0  \\
    1-\frac{n-y_0}{y-y_0} & \text{ if } y_0\leq n\leq y   \\
    0                     & \text{ otherwise}             \\
\end{cases}
\end{equation}

 The continuity of $f$ enables us to write the weighted sum as an absolutely convergent Perron integral, which solves the issue mentioned above. Moreover $y-y_0$ is small, so it is not hard to show that the weighted sum is ``close" to the original one. Handling the $2k$-th moment of the resulting integrals is done similarly to the proof of Theorem \ref{t2}.

The second part Theorem \ref{t3} has similar proof, but has an additional ingredient. As in the first part, we switch to the weighted version $\sum_{n\leq q/y} f(n)\chi(n)$ as well. However, we will shift the line of integration to $\Re s=1/2-1/\log y$ instead and use the inequality $|L(s,\chi)|\ll (tq)^{1/2-\sigma}|L(1-s,\bar\chi)|$, which is a consequence of the functional equation for Dirichlet $L$-functions. When showing that the weighted version is not far from the original one, we need to prove a crude bound on the moment of $\sum_{n\leq q/y} \chi(n)$. This is done using the Poisson summation formula for character sums discovered by Pólya in \cite{pohlya1918ugber}, and the proof is inspired by a paper of Montgomery and Vaughan \cite{montgomery1979mean}, where the authors show that for each $k>0$ one has 
\begin{equation*}
 \sum_{\chi\neq \chi_0} \max_{1\leq y\leq q} \bigg| \sum_{1\leq n\leq y} \chi(n)\bigg|^{2k}\ll \phi(q)q^k.
\end{equation*}
\section{A crude bound on shifted moments}
Before we start proving our main theorems, we need a crude upper bound on shifted moments of $L$-functions both on and off the critical line. This short section is devoted to the proof of the following proposition.
\begin{proposition}
\label{crude_prop}
    Let $y\geq 2$ be a real number and let the assumptions of Theorem \ref{t1} hold with the same notation. We then have
\begin{equation*}
    \sum_{\chi \in X_q^*} \big| L\big(1/2+1/\log y+it_1,\chi \big) \big|^{a_1} \cdots \big| L\big(1/2+1/\log y+it_{2k},\chi \big). \big|^{a_{2k}}\leq \phi(q)(\min \{\log y+1,\log q \})^{O(1)}.
\end{equation*}
Here the implied constant depends on $k$, $A$ (recall $|t_j|\leq t^A$) and the $a_j$, but not on $q$, $y$ or the $t_j$.
\end{proposition}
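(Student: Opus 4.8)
The plan is to deduce Proposition \ref{crude_prop} from the off‑line analogue of Soundararajan's pointwise bound together with his large‑deviation method for moments; since we only seek \emph{some} power of a logarithm, we can be wasteful and need neither sharp constants nor Harper's refinement. Write $\sigma_0 = 1/2 + 1/\log y$ and $a = a_1 + \cdots + a_{2k}$. First, by the weighted AM--GM inequality $\prod_j x_j^{a_j} \le \sum_j \frac{a_j}{a}\,x_j^{a}$ (applied with $x_j = |L(\sigma_0 + it_j,\chi)|$) it suffices to prove, uniformly for real $t$ with $|t| \le q^A$, that
\[
 \sum_{\chi \in X_q^*} |L(\sigma_0 + it,\chi)|^{a} \;\ll\; \phi(q)\,\big(\min\{\log y + 1,\ \log q\}\big)^{O(1)},
\]
with the implied constants permitted to depend on $a$ and $A$.

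Next, apply the version of Lemma 2.3 of \cite{munschshifted} on the line $\Re s = \sigma_0$ — which, like its prototype in \cite{sound}, holds for every $\Re s \ge 1/2$ — with a truncation parameter $x \in [2,q]$ still at our disposal: on RH for $L(\cdot,\chi)$,
\[
 \log|L(\sigma_0 + it,\chi)| \;\le\; \Re \sum_{p \le x}\frac{\chi(p)\,w_x(p)}{p^{\sigma_0 + 1/\log x + it}} \;+\; C(A)\,\frac{\log q}{\log x} \;+\; O(1),
\]
where $0 \le w_x(p) \le 1$ and the prime–power terms $p^j$ ($j \ge 2$) have been swallowed by the error, using that their total is $\ll \sum_p p^{-2\sigma_0} \ll \log\log y + 1$ and also $\ll \sum_{p\le q}p^{-1} \ll \log\log q + 1$. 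Put $\mathcal V := \min\{\log\log y,\ \log\log q\} + 1$; by Mertens' theorem (in the shape $\log\zeta(1+\delta) = \log(1/\delta) + O(1)$ with $\delta = 2/\log y$, plus $\sum_{p\le q}p^{-1} = \log\log q + O(1)$) one has $\sum_{p\le x}|w_x(p)|^2 p^{-2\sigma_0} \ll \mathcal V$ for all $x \le q$, and $\mathcal V$ will act as the variance of the prime sum.

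The heart of the argument is a bound on $S^*(V) := \#\{\chi \in X_q^*:\ \log|L(\sigma_0+it,\chi)| \ge V\}$. For $1 \le V \le c_0(A)\mathcal V$ one chooses $x = q^{\delta(A)/V}$ so that $C(A)\log q/\log x \le V/10$; then $\log|L| \ge V$ forces the prime Dirichlet polynomial $\sum_{p\le x}\chi(p)w_x(p)p^{-\sigma_0 - 1/\log x - it}$ to exceed $V/2$ in modulus, and one bounds the number of such $\chi$ by its $2\ell$‑th moment with $\ell \asymp V^2/\mathcal V$. The key point is that each product of at most $\ell$ primes from $[1,x]$ is $\le x^\ell < q$, so summing over $\chi \bmod q$ and using orthogonality leaves only the diagonal, which is $\ll \phi(q)\,\ell!\,\big(\sum_{p\le x}|w_x(p)|^2 p^{-2\sigma_0}\big)^\ell \ll \phi(q)(C\mathcal V)^\ell \ell^\ell$; optimising $\ell$ yields $S^*(V) \ll \phi(q)\exp(-cV^2/\mathcal V)$. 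For $V \ge c_0(A)\mathcal V$ one repeats this with $\ell \asymp V$ and $x = q^{\delta'(A)/V}$ (still keeping $x^\ell < q$ and the error term $\le V/2$), obtaining $S^*(V) \ll \phi(q)\exp(-cV\log(V/\mathcal V))$, whose right‑hand side is eventually smaller than $e^{-aV}$; and finally $S^*(V) = 0$ once $V$ exceeds the GRH‑conditional pointwise bound $\log|L(\sigma_0+it,\chi)| \ll_A \log q/\log\log q$. Feeding all of this into $\sum_{\chi\in X_q^*}|L(\sigma_0+it,\chi)|^{a} = a\int_{-\infty}^{\infty} S^*(V)e^{aV}\,dV$ and splitting at $V \asymp_{a,A} \mathcal V$, the integral is dominated by the range $V \asymp a\mathcal V$ and comes out $\ll \phi(q)\exp(O_{a,A}(\mathcal V)) \ll \phi(q)(\min\{\log y+1,\log q\})^{O_{a,A}(1)}$; with the first reduction this is the Proposition.

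The main obstacle is the large‑deviation estimate, specifically arranging that the Dirichlet polynomials whose moments are computed have length below $q$, which forces the truncation $x = q^{\Theta(1/V)}$ to shrink with $V$ and makes the split into ranges of $V$ the one genuinely fussy part. This is the same difficulty — handled there more carefully and with a dyadic iteration over prime ranges — that underlies the proofs of Theorems \ref{t1}--\ref{t3}; for the crude bound here the single, unoptimised moment estimate $\sum_{\chi \bmod q}|\,\text{prime polynomial}\,|^{2\ell} \ll \phi(q)\,\ell!\,\mathcal V^{\ell}$ is all that is required.
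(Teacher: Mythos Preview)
Your proposal is correct and follows essentially the same approach as the paper: Soundararajan's large-deviation method, using the off-line pointwise bound (Lemma~\ref{up}) together with high moments of the resulting prime Dirichlet polynomial over $\chi \bmod q$. The paper's execution is slightly more streamlined --- it reduces via H\"older to a single integer exponent $2k$, then uses one threshold $V_0\asymp_{k,A}\log L_0$ and the crude estimate $k_1!\le V^{k_1}$ (with $k_1=\lfloor V/(100C_1)\rfloor$) to obtain $N(V)\ll\phi(q)e^{-4kV}$ in one stroke, bypassing your two-regime split into Gaussian and super-Gaussian tails.
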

Note that taking $y\to\infty$, by the continuity of $L(s,\chi)$ we get the corresponding moment bound on the critical line.

For the proof of Proposition \ref{crude_prop} we need to make use of the following lemma, which is essentially Proposition 2.3 in \cite{munschshifted}.
\begin{lemma}
\label{up}
Let $y\geq 2$ and $t$ be real numbers and define $\log^+ t=\max\{ 0,\log t\}$. For any $2\leq x\leq q$ and $\chi\in X_q^*$ we have 
\begin{equation*}
    \label{variant}
    \log |L(1/2+1/\log y+it, \chi)| \leq \Re \sum_{n\leq x}\frac{\chi(n)\Lambda(n)}{n^{1/2+\max(1/\log y, 1/\log x) +it} \log n}\frac{\log x/n}{\log x}+\frac{\log q+\log^+ t}{\log x}+O(1/\log x).
\end{equation*}
\end{lemma}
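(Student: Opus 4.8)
The plan is to adapt Soundararajan's derivation of \eqref{soundupper} to Dirichlet $L$-functions, following the proof of Proposition~2.3 of \cite{munschshifted}; the present lemma differs from that proposition only in that the $L$-value is taken at $\tfrac12+\tfrac1{\log y}+it$, slightly to the right of the critical line, and the approximating polynomial is anchored at $s_0:=\tfrac12+\sigma^{*}+it$ with $\sigma^{*}:=\max(1/\log y,1/\log x)$. I would first reduce to the bound with $\log|L(\tfrac12+\tfrac1{\log y}+it,\chi)|$ replaced by $\log|L(s_0,\chi)|$. Indeed, if $1/\log y\ge 1/\log x$ the two points coincide; and if $1/\log y<1/\log x$, then $s_0$ lies to the right and, from the Hadamard factorisation together with the nonnegativity of $\Re\tfrac1{s-\rho}$ for $\Re s>1/2$ (which is where GRH enters) and Stirling, one has $-\Re\frac{L'}{L}(\tfrac12+u+it,\chi)\le\tfrac12\log\tfrac q\pi+O(\log^{+}t+1)$ for all $u>0$, whence
\begin{equation*}
\log\big|L(\tfrac12+\tfrac1{\log y}+it,\chi)\big|-\log|L(s_0,\chi)|=-\int_{1/\log y}^{1/\log x}\Re\frac{L'}{L}\big(\tfrac12+u+it,\chi\big)\,du\le\frac{\tfrac12\log q+O(\log^{+}t+1)}{\log x},
\end{equation*}
which is absorbed into the error term.

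For $\log|L(s_0,\chi)|$ itself I would use that, under GRH, $L(\cdot,\chi)$ is zero-free in $\Re s>1/2$ and $L(\sigma+it,\chi)\to 1$ as $\sigma\to\infty$, so that
\begin{equation*}
\log|L(s_0,\chi)|=-\int_{\sigma^{*}}^{\infty}\Re\frac{L'}{L}\big(\tfrac12+u+it,\chi\big)\,du,
\end{equation*}
and then insert the smoothed explicit formula for $-\frac{L'}{L}$. Moving the contour in $\frac{1}{2\pi i\log x}\int_{(c)}\big(-\frac{L'}{L}\big)(w+z,\chi)\frac{x^{z}}{z^{2}}\,dz$ (with $c$ large, $\Re w>1/2$ and $w$ not a zero) all the way to the left — the horizontal segments taken at heights avoiding zeros, the far-left vertical integral vanishing because $L'/L$ grows only polynomially in the left half-plane — and collecting the residue at $z=0$ and those at $z=\rho-w$ over all zeros $\rho$ gives
\begin{equation*}
-\frac{L'}{L}(w,\chi)=\sum_{n\le x}\frac{\Lambda(n)\chi(n)}{n^{w}}\frac{\log(x/n)}{\log x}+\frac{1}{\log x}\Big(\frac{L'}{L}\Big)'(w,\chi)+\frac{1}{\log x}\sum_{\rho}\frac{x^{\rho-w}}{(\rho-w)^{2}},
\end{equation*}
the last sum being over all (nontrivial and trivial) zeros with multiplicity. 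Taking $w=\tfrac12+u+it$, integrating over $u\in[\sigma^{*},\infty)$ and taking real parts, the leading term becomes exactly the Dirichlet polynomial of the lemma, since $\int_{\sigma^{*}}^{\infty}n^{-1/2-u-it}\,du=n^{-s_0}/\log n$ supplies both the $1/\log n$ and the shift $\max(1/\log y,1/\log x)$ in the exponent.

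It then remains to bound the two error integrals. The $(L'/L)'$ term integrates to $-\frac{1}{\log x}\Re\frac{L'}{L}(s_0,\chi)$, which the estimate above bounds by $\frac{\tfrac12\log q+O(\log^{+}t+1)}{\log x}$. In the zero term the trivial zeros contribute only $O(1/(\log x)^{2})$, and the crux is the nontrivial part: under GRH $\rho=\tfrac12+i\gamma$, so moving absolute values inside and using $\int_{\sigma^{*}}^{\infty}x^{-u}\,du=x^{-\sigma^{*}}/\log x\le e^{-1}/\log x$ (valid since $\sigma^{*}\ge 1/\log x$) it is at most
\begin{equation*}
\frac{1}{\log x}\sum_{\rho}\int_{\sigma^{*}}^{\infty}\frac{x^{-u}}{u^{2}+(\gamma-t)^{2}}\,du\ \le\ \frac{e^{-1}}{(\log x)^{2}}\sum_{\rho}\frac{1}{(\sigma^{*})^{2}+(\gamma-t)^{2}}.
\end{equation*}
I would estimate the remaining sum over zeros dyadically in $|\gamma-t|$ by the classical density bound (the number of $\gamma$ with $|\gamma-t|\le\ell$ is $\ll 1+\ell\log(q(|t|+2))$), which yields $\sum_{\rho}((\sigma^{*})^{2}+(\gamma-t)^{2})^{-1}\ll(\sigma^{*})^{-2}+(\sigma^{*})^{-1}\log(q(|t|+2))$; since $\sigma^{*}\ge 1/\log x$ and $2\le x\le q$ (so $\log q/\log x\ge 1$), the whole nontrivial-zero term is $\ll\frac{\log q+\log^{+}t}{\log x}$. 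Combining the pieces gives $\log|L(s_0,\chi)|\le\Re(\text{Dirichlet polynomial})+O\big(\tfrac{\log q+\log^{+}t}{\log x}\big)$, and together with the reduction of the first paragraph — and the fact that $\log q/\log x\ge 1$ subsumes the stray $O(1)$ and $O(1/\log x)$ contributions — this is the lemma. The one genuinely delicate step is the estimate of the nontrivial-zero integral; everything else is Soundararajan's argument essentially verbatim, which is why this is ``essentially Proposition~2.3 in \cite{munschshifted}''.
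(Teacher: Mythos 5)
The paper's own proof of Lemma~\ref{up} is a two--line reduction to \cite{munschshifted} (equation~2.8 when $y<x$, using the sign of $F_\chi(s_0)$; Proposition~2.3 with $\lambda=1$, $\sigma=1/2+1/\log y$ when $y\ge x$), so your from--scratch derivation is a genuinely different route. The reduction to the point $s_0=\tfrac12+\sigma^*+it$ is fine, as is the smoothed explicit formula; the problem is in the step you yourself flag as ``the one genuinely delicate step,'' the nontrivial--zero term.

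After taking absolute values you need to bound $\sum_\rho\bigl((\sigma^*)^2+(\gamma-t)^2\bigr)^{-1}$, and you invoke the density bound ``the number of $\gamma$ with $|\gamma-t|\le\ell$ is $\ll 1+\ell\log(q(|t|+2))$.'' That estimate is not known for $\ell<1$: the classical bound is only that $\ll\log(q(|t|+2))$ zeros lie in \emph{any} interval of length~$1$ about $t$, with no improvement in short windows (your form would preclude clustering of more than $O(1)$ zeros within $1/\log^2 q$ of one another, which is a strong and open zero--repulsion statement, not a consequence of GRH). With the correct density bound one only gets
\begin{equation*}
\sum_\rho\frac{1}{(\sigma^*)^2+(\gamma-t)^2}\ \ll\ (\sigma^*)^{-2}\log\bigl(q(|t|+2)\bigr),
\end{equation*}
so your error term for the nontrivial zeros is $\ll\log(q(|t|+2))$ rather than $\ll(\log q+\log^+ t)/\log x$ --- a factor of $\log x$ too big.

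The fix (and this is what Soundararajan's and Munsch's argument actually does) is not to take absolute values in the zero sum at all, but to compare it to the other zero--sum you already have in hand: by Hadamard,
\begin{equation*}
-\,\frac{1}{\log x}\,\Re\frac{L'}{L}(s_0,\chi)\ =\ \frac{\tfrac12\log\frac q\pi+O(\log^+t+1)}{\log x}\ -\ \frac{1}{\log x}\sum_\rho\frac{\sigma^*}{(\sigma^*)^2+(\gamma-t)^2},
\end{equation*}
and the last (nonnegative, by GRH) sum dominates the explicit--formula zero contribution; this is precisely the ``$F_\chi(s_0)>0$'' sign argument the paper appeals to. In your write--up you discard that negative term when bounding error$_1$, and thereby lose the cancellation that your subsequent density estimate cannot recover.
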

\begin{proof}
    If $y<x$, we use equation 2.8. in \cite{munschshifted}. We substitute $s_0=1/2+1/\log y+it$ and note that the terms involving $F_{\chi}(s_0)$ have negative contribution, since $F_{\chi}(s_0)>0$ and $y<x$. If $y\geq x$, then in Proposition 2.3 of \cite{munschshifted} we substitute $\lambda=1$ and $\sigma=1/2+1/\log y$.
\end{proof}
\begin{proof}[Proof of Proposition \ref{crude_prop}]
Let us denote $L_0:=\min \{\log y+1, \log q\}$. Note that by Hölder's inequality, since we are allowed to lose a power of $L_0$ in our estimates, it is enough to show that for any fixed integer $k\geq 1$ and $|t|\leq q^A$ we have
\begin{equation*}
    \sum_{\chi \in X_q^*} \big| L\big(1/2+1/\log y+ it,\chi \big) \big|^{2k}\leq \phi(q) L_0^{O(1)}. 
\end{equation*}
Here, and throughout the proof, our constants are allowed to be dependent on $k$ and $A$.

By Lemma \ref{up} and Mertens' estimates, there are absolute constants $C_1\geq 1$ and $C_2\geq 1$, such that for any $2\leq x\leq q$ we have
\begin{equation}
\label{simplified}
    \log |L(1/2+1/\log y+it,\chi)|\leq \Re \sum_{p\leq x} \frac{\chi(p)}{p^{1/2+\max(1/\log y, 1/\log x)+it} }\frac{\log x/p}{\log x}+C_1\frac{\log q}{\log x}+C_2\log L_0.
\end{equation}
For any integer $V\geq 1$, let $N(V)$ be the number of characters $\chi \in X_q^*$, such that $\log |L(1/2+1/\log y+it,\chi)|\geq V$. Assume $\chi$ is counted by $N(V)$ and also that $V\geq 4C_2\log L_0$. Let us choose $x=q^{10C_1/V}$ in \eqref{simplified}, so for such characters, we have
\begin{equation}
\label{Vhalf}
    \Re \sum_{p\leq x} \frac{\chi(p)}{p^{1/2+\max(1/\log y,1/\log x)+it} }\frac{\log x/p}{\log x}\geq \frac{V}{2}.
\end{equation}
We now use Lemma 2.8. in \cite{munschshifted}, with $x=q^{10C_1/V}$, $k_1=\lfloor\frac{V}{100C_1}\rfloor$ and $a(p)=p^{-\max(1/\log y ,1/\log x)}\cdot \frac{\log x/p}{\log x}$ (note that we use $k_1$ because the variable $k$ has already been used before). We have $x^{k_1}\leq q^{1/10}$, so the lemma is applicable. Since $x\leq q$, we have
\begin{equation*}
    \sum_{p\leq x} \frac{|a(p)|^2}{p}\leq \sum_{p\leq x} \frac{1}{p^{1+1/\log y} }\leq \log L_0 +C_3,
\end{equation*}
for some constant $C_3\geq 1$, so Lemma 2.8. of \cite{munschshifted} and \eqref{Vhalf} implies
\begin{equation*}
    N(V)\cdot \Big(\frac{V}{2} \Big)^{2k_1}\leq \phi(q)k_1! (\log L_0+C_3)^{k_1},
\end{equation*}
 We use the crude bound $k_1!\leq V^{k_1}$. If $V\geq e^{10000C_1k}(\log L_0+C_3)$, we deduce
\begin{equation}
\label{markov}
    N(V)\leq \phi(q) e^{-4kV}.
\end{equation}
Finally, let $V_0=\big\lceil \max \{ e^{10000C_1k}(\log L_0+C_3), 4C_2\log L_0\}\big\rceil $, so if $V\geq V_0$ then \eqref{markov} holds. Therefore, by partial summation we get
\begin{equation*}
    \sum_{\chi \in X_q^*} \big| L\big(1/2+1/\log y+ it,\chi \big) \big|^{2k}\leq \phi(q)e^{2kV_0}+\sum_{V= V_0}^{\infty} N(V)e^{(V+1)2k}\leq  \phi(q)e^{2kV_0}+\phi(q) e^{2k}\sum_{V=V_0}^{\infty} e^{-2kV}\leq \phi(q) L_0^{O(1)},
\end{equation*}
which proves the proposition.
\end{proof}

\section{Proof of Theorem \ref{t1}}
\noindent We start with a lemma which gives an upper bound on
\begin{equation*}
    \Re \sum_{n\leq x} \frac{1}{p^{1+i\alpha}},
\end{equation*}
for any $x\geq 2$ and $\alpha\geq 0$.
Our lemma and its proof are based on the unconditional Lemma 2.9 in \cite{munschshifted}. As we are allowed to assume RH, we are able to strengthen it slightly.
\begin{lemma}
\label{mertenstype}
Let $\alpha>0$, then
\begin{equation*}
\sum_{p\leq x} \frac{\cos(\alpha \log p) }{p}\leq 
\begin{cases}
\log\log x+O(1)            & \text{if }  \alpha\leq 1/\log x \text{ or } \alpha\geq e^x         \\
\log(1/\alpha)+O(1)        & \text{if }  1/\log x\leq \alpha \leq 10,  \\
 \log\log\log \alpha+O(1) & \text{if }  10\leq \alpha\leq e^{x},                \\
\end{cases}
\end{equation*}
where the first two estimates are unconditional and the third one holds under RH. 
\end{lemma}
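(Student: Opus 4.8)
The starting point is the classical identity, valid under RH, that relates the prime sum to the logarithmic derivative of $\zeta$. Concretely, write
\[
\sum_{p\le x}\frac{\cos(\alpha\log p)}{p}
=\Re\sum_{p\le x}\frac{1}{p^{1+i\alpha}}
=\Re\log\zeta(1+i\alpha)+\Re\sum_{p\le x}\Big(\frac{1}{p^{1+i\alpha}}-\text{(tail)}\Big)+O(1),
\]
so the problem splits into two parts: (i) estimating $\log|\zeta(1+i\alpha)|$ (or more precisely $\Re\log\zeta(1+i\alpha)$, which is $\log|\zeta(1+i\alpha)|$), and (ii) controlling the error incurred in truncating the Euler product / Dirichlet series at $x$. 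For part (ii) one uses the standard RH-conditional bound $\sum_{p\le x}p^{-1-i\alpha}=\log\zeta(1+i\alpha)+O\!\big(\tfrac{1}{\log x}\min(\log x,\,\tfrac{1}{|\alpha|}+\cdots)\big)$ type estimate — in fact the cleanest route is to follow Lemma 2.9 of \cite{munschshifted} verbatim for the truncation and only feed in a sharper input for $|\zeta(1+i\alpha)|$. The ranges in the statement correspond exactly to: $\alpha$ so small that the truncation at $x$ dominates ($\alpha\le 1/\log x$, giving $\log\log x$); $\alpha$ of moderate size where $|\zeta(1+i\alpha)|\asymp 1/|\alpha|$ because of the pole at $s=1$ ($1/\log x\le\alpha\le 10$, giving $\log(1/\alpha)$); and $\alpha$ large where one needs the true size of $\zeta$ on the $1$-line ($10\le\alpha\le e^x$, giving $\log\log\log\alpha$); finally $\alpha\ge e^x$ is so large that again the truncation governs and one falls back to $\log\log x$.

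\textbf{The key input and the main obstacle.} The only place RH is used in an essential way is the third range: one needs $|\zeta(1+i\alpha)|\ll\log\log\alpha$ for $\alpha$ in this range. Unconditionally Mertens-type arguments only give $|\zeta(1+i\alpha)|\ll\log\alpha$ (hence $\log\log\alpha$ after taking logs, which would produce $\log\log\log\alpha$... wait — no: unconditionally $\log|\zeta(1+i\alpha)|\ll\log\log\alpha$, giving the weaker bound, whereas under RH one has the Littlewood bound $\log|\zeta(1+i\alpha)|\ll\log\log\log\alpha + O(1)$). Thus the plan for this range is to invoke the RH-conditional bound $|\zeta(1+it)|\ll\log\log t$ for $t\ge 10$, which is a standard consequence of RH (via the explicit formula / Soundararajan's method, or Littlewood's classical estimate). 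Then $\Re\log\zeta(1+i\alpha)=\log|\zeta(1+i\alpha)|\le\log\log\log\alpha+O(1)$. I expect the main technical nuisance — not a deep obstacle but the part requiring care — to be tracking uniformity in the truncation error across the boundary cases $\alpha\approx 1/\log x$, $\alpha\approx 10$ and $\alpha\approx e^x$, making sure the $O(1)$'s are genuinely absolute and that, e.g., at $\alpha=e^x$ the bound $\log\log\log\alpha=\log\log x$ matches the first case so the piecewise definition is consistent.

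\textbf{Carrying it out.} First, for $\alpha\le 1/\log x$: here $\cos(\alpha\log p)=1+O((\alpha\log p)^2)=1+O((\log p/\log x)^2)$ for $p\le x$, and $\sum_{p\le x}(\log p/\log x)^2/p\ll 1$, so $\sum_{p\le x}\cos(\alpha\log p)/p=\sum_{p\le x}1/p+O(1)=\log\log x+O(1)$ by Mertens. Second, for $1/\log x\le\alpha\le 10$: apply the unconditional Lemma 2.9 of \cite{munschshifted} (or redo it), which gives $\sum_{p\le x}p^{-1-i\alpha}=-\log(i\alpha\cdot(\text{something}))+O(1)$; the upshot is $\Re\sum_{p\le x}p^{-1-i\alpha}\le\log(1/\alpha)+O(1)$ coming from the pole of $\zeta$ at $1$ — here no RH is needed since $\alpha$ is bounded and $\zeta(1+i\alpha)$ is within a bounded region where $|\zeta(1+i\alpha)|\ll 1/\alpha$ unconditionally. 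Third, for $10\le\alpha\le e^x$: split as $\sum_{p\le x}p^{-1-i\alpha}=\log\zeta(1+i\alpha)+E$, where by partial summation and the prime number theorem (or RH for a power-saving, though only a weak bound on the tail is needed) $E\ll\frac{1}{\log x}+\frac{\log\alpha}{x^{c}}\ll 1$ using $\alpha\le e^x$; then take real parts and use the RH bound $\log|\zeta(1+i\alpha)|\le\log\log\log\alpha+O(1)$. Fourth, for $\alpha\ge e^x$: since $\log\alpha\ge x$, the oscillation $\cos(\alpha\log p)$ over $p\le x$ is essentially unconstrained, and the cleanest bound is to just note $\sum_{p\le x}\cos(\alpha\log p)/p\le\sum_{p\le x}1/p=\log\log x+O(1)$ trivially; this is what the statement asserts. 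Assembling the four cases yields the lemma, with RH invoked only in the third range.
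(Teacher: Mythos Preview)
Your overall strategy matches the paper's: write $\sum_{p\le x}\cos(\alpha\log p)/p=\Re\sum_{p\le x}p^{-1-i\alpha}$, relate this to $\log|\zeta|$, and bound $|\zeta|$ separately in each range, with the trivial bound $\cos\le 1$ handling both $\alpha\le 1/\log x$ and $\alpha\ge e^x$, the pole of $\zeta$ at $s=1$ handling $1/\log x\le\alpha\le 10$, and the RH bound on $\zeta$ near the $1$-line handling $10\le\alpha\le e^x$. The one substantive difference is that the paper does \emph{not} work at $\sigma=1$ but at $\sigma=1+1/\log x$: it shows by partial summation that $\Re\sum_{p\le x}p^{-1-i\alpha}=\log|\zeta(1+1/\log x+i\alpha)|+O(1)$ with the $O(1)$ uniform in $\alpha$, simply because the tail $\sum_{p>x}p^{-1-1/\log x}$ is absolutely $O(1)$, and then quotes Corollary~13.16 of \cite{montgomery2007multiplicative} for $\log|\zeta(1+1/\log x+i\alpha)|\le\log\log\log\alpha+O(1)$ under RH.

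Your explicit third-range argument has a gap. You claim $\sum_{p\le x}p^{-1-i\alpha}=\log\zeta(1+i\alpha)+E$ with $E\ll\frac{1}{\log x}+\frac{\log\alpha}{x^{c}}\ll 1$ for $\alpha\le e^x$, but the last inequality would require $c\ge 1$, which neither PNT nor RH supplies. Direct partial summation on the line $\sigma=1$ using $\pi(t)-\mathrm{Li}(t)\ll\sqrt t\log t$ gives a tail error $\ll\frac{1}{\alpha\log x}+(1+\alpha)x^{-1/2}\log x$, which blows up for $\alpha$ near $e^x$; the oscillatory cancellation in $\sum_{p>x}p^{-1-i\alpha}$ does not survive uniformly to such large $\alpha$. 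The fix is precisely the paper's shift to $\sigma=1+1/\log x$, after which the truncation error is trivially uniform and only the RH bound on $|\zeta|$ slightly to the right of the $1$-line is needed.
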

\begin{proof}
The first part is implied by Mertens' second estimate, since for any $\alpha$ we have
$$\sum_{p\leq x} \frac{\cos(\alpha \log p) }{p}\leq \sum_{p\leq x} \frac{1 }{p}\leq \log\log x+O(1).$$
By partial summation we obtain
\begin{equation*}
     \sum_{p\leq x} \frac{\cos(\alpha \log p) }{p}=\Re \sum_{p\leq x} \frac{1 }{p^{1+i\alpha }}= \Re \log \zeta(1+1/\log x +i\alpha)+O(1) =  \log |\zeta(1+1/\log x +i\alpha)|+O(1).
\end{equation*}

$\zeta(s)$ has a simple pole at $1$ with residue $1$, so if $1/\log x\leq \alpha\leq 10$, we have
$$|\zeta(1+1/\log x+i\alpha)|=\frac{1}{|1/\log x+i\alpha|} +O(1)\ll \frac{1}{\alpha},$$
which implies
$$\log |\zeta(1+1/\log x +i\alpha)|\leq \log(1/\alpha)+O(1), $$
which proves the second part. In the range $10\leq \alpha\leq e^x$, assuming RH we may use Corollary 13.16 in \cite{montgomery2007multiplicative} to get
$$\log|\zeta(1+1/\log x+i\alpha)|\leq \log\log\log \alpha+O(1),$$
which proves the third part.
\end{proof}
Our next lemma essentially restates Lemma \ref{up}, it gives an upper bound  on $\log |L(1/2+it, \chi)|$ in terms of a Dirichlet polynomial and an extra term which is easy to understand.
\begin{lemma}
\label{Lupper}
Let $\chi$ be a primitive character mod $q$, where $q>1$, let $T>0$ and $x\geq 2$. Define $\log^+ T=\max\{0,\log T\}$. Assuming the Generalised Riemann Hypothesis on $L(s,\chi)$, for $|t|\leq T$ uniformly one has 
\begin{equation}
\label{upperbound}
\log |L(1/2+it, \chi)| \leq \Re \sum_{n\leq x}\frac{\chi(n)\Lambda(n)}{n^{1/2+1/\log x +it} \log n}\frac{\log x/n}{\log x}+\frac{\log q+\log^+ T}{\log x}+O(1/\log x).
\end{equation}
\end{lemma}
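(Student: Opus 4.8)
The statement to prove (Lemma \ref{Lupper}) is a mild repackaging of Lemma \ref{up}, so the plan is to deduce it directly rather than redo any analysis of the explicit formula. First I would observe that the quantity $\max(1/\log y, 1/\log x)$ appearing in Lemma \ref{up} collapses to $1/\log x$ precisely when $y\geq x$; so I would apply Lemma \ref{up} with a choice of $y$ that is large in terms of $x$ (for instance $y = x$, or more safely $y \geq x$), which makes the denominator exponent exactly $1/2 + 1/\log x + it$ as required in \eqref{upperbound}. The only other discrepancy is that Lemma \ref{up} is stated with $\log^+ t$ for a fixed $t$, whereas here we want a uniform statement for all $|t|\le T$ with $\log^+ T$ in its place; since $\log^+$ is monotone, $\log^+ t \le \log^+ T$ for $|t|\le T$, so replacing $\log^+ t$ by $\log^+ T$ only weakens the bound and the inequality persists uniformly.

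Concretely, the key steps in order are: (i) fix $\chi\in X_q^*$, $T>0$, $x\ge 2$, and a real $t$ with $|t|\le T$; (ii) apply Lemma \ref{up} with $y$ chosen so that $y\ge x$ (hence $\max(1/\log y,1/\log x) = 1/\log x$), noting that the hypothesis $2\le x\le q$ of Lemma \ref{up} is exactly the hypothesis here — and if one wants the full range $x\ge 2$ without the upper constraint $x\le q$, note that for $x>q$ the right-hand side only gets larger term-by-term in the relevant ranges, or simply restrict to $x\le q$ which is all the applications need; (iii) bound $\log^+ t\le \log^+ T$ to get the stated uniform form; (iv) absorb the constant from Mertens-type estimates into the $O(1/\log x)$ term, which is already how Lemma \ref{up} is phrased. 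This yields \eqref{upperbound} verbatim.

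There is essentially no genuine obstacle here — the content is entirely in Lemma \ref{up} (equivalently Proposition 2.3 of \cite{munschshifted}), which we are permitted to cite. The only point requiring a word of care is the interchange of a pointwise-in-$t$ bound for a uniform-in-$|t|\le T$ bound: this is harmless because the Dirichlet polynomial on the right is evaluated at the same $t$ as the left side, and only the extraneous additive term $\log^+ t/\log x$ needs to be replaced by its uniform majorant $\log^+ T/\log x$. So the proof is just: "Apply Lemma \ref{up} with $y\ge x$ and bound $\log^+ t\le \log^+ T$." I would write it in two or three sentences.

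\begin{proof}
Fix a primitive character $\chi$ mod $q$, real numbers $T>0$ and $x\ge 2$ (we may assume $2\le x\le q$, as this is the only range used in the sequel), and a real number $t$ with $|t|\le T$. Apply Lemma \ref{up} with any parameter $y\ge x$; then $\max(1/\log y,1/\log x)=1/\log x$, so Lemma \ref{up} gives
\begin{equation*}
\log |L(1/2+it, \chi)| \leq \Re \sum_{n\leq x}\frac{\chi(n)\Lambda(n)}{n^{1/2+1/\log x +it} \log n}\frac{\log x/n}{\log x}+\frac{\log q+\log^+ t}{\log x}+O(1/\log x).
\end{equation*}
Since $\log^+$ is non-decreasing and $|t|\le T$, we have $\log^+ t\le \log^+ T$, and replacing $\log^+ t$ by $\log^+ T$ in the displayed bound only increases the right-hand side. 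This yields \eqref{upperbound} uniformly for $|t|\le T$, as claimed.
\end{proof}
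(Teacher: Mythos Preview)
Your deduction has a genuine gap: Lemma \ref{up} bounds $\log|L(1/2+1/\log y+it,\chi)|$, not $\log|L(1/2+it,\chi)|$. When you apply it with a finite $y\ge x$, the left-hand side you obtain is evaluated at the shifted point $1/2+1/\log y$, so the displayed inequality in your proof does not follow as written. The parameter $y$ controls both the shift on the left and (together with $x$) the exponent in the Dirichlet polynomial on the right; choosing $y\ge x$ fixes the right-hand side exponent at $1/\log x$ as you want, but it does not remove the $1/\log y$ shift on the left.

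The fix is easy but needs to be said: let $y\to\infty$ in Lemma \ref{up}. For $y\ge x$ the right-hand side is already independent of $y$, and the $O(1/\log x)$ constant is uniform in $y$ (it comes from Proposition 2.3 of \cite{munschshifted} with $\sigma=1/2+1/\log y$), so continuity of $L(s,\chi)$ in $s$ gives the bound at $\sigma=1/2$. Alternatively, and this is what the paper does, simply invoke Proposition 2.3 of \cite{munschshifted} directly with $\lambda=1$ and $\sigma=1/2$; this is exactly the $\sigma=1/2$ endpoint that Lemma \ref{up} does not literally include. Your monotonicity step $\log^+|t|\le\log^+ T$ is fine and is the only additional observation needed.
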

\begin{proof}
This is Proposition 2.3. in \cite{munschshifted} with $\lambda=1$ and $\sigma=1/2$.


\end{proof}
The main contribution in the Dirichlet polynomial comes from $n=p$ a prime and further non-negligible contribution comes from $n=p^2$. The contribution from higher prime powers is $O(1)$. The next proposition is an easy consequence of the above lemma.
\begin{proposition}
Let $2k$ be a positive integer and let $A,a_1,a_2,\ldots, a_{2k}$ be positive constants, $x\geq 2$. Let $a:=a_1+\cdots+ a_{2k}+10$. Let $q$ be a large modulus and assume that GRH holds for $L(s,\chi)$, where $\chi$ is a primitive character mod $q$. Let $t_1,\ldots, t_{2k}$ real numbers with $|t_i|\leq q^A$. For any integer $n$, let
$$h(n):=\frac{1}{2}(a_1n^{-it_1}+\cdots +a_{2k}n^{-it_{2k}}).$$
Then
\begin{multline}
\label{mainupper}
a_1\log |L(1/2+it_1,\chi)|+\cdots +a_{2k}\log |L(1/2+it_{2k},\chi)|  \\
    \leq 2\cdot\Re \sum_{p\leq x} \frac{h(p)\chi(p)}{p^{1/2+1/\log x}}\frac{\log x/p}{\log x}+\Re\sum_{p\leq x^{1/2}} \frac{h(p^2)\chi(p^2)}{p}+(A+1)a\frac{\log q}{\log x}+O(1).
\end{multline}
\end{proposition}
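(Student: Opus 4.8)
The plan is to derive \eqref{mainupper} from Lemma \ref{Lupper} by linearly combining the inequalities for each shift $t_j$ and then separating the contribution of primes, prime squares, and higher prime powers in the resulting von~Mangoldt sum. First I would apply Lemma \ref{Lupper} with $T = q^A$ (legitimate since $|t_j| \leq q^A$), once for each $j = 1, \dots, 2k$, multiply the $j$-th inequality by $a_j > 0$, and add. The left-hand side is exactly $a_1 \log|L(1/2+it_1,\chi)| + \cdots + a_{2k}\log|L(1/2+it_{2k},\chi)|$. On the right-hand side, the error terms combine: each contributes $a_j \cdot \frac{\log q + \log^+ T}{\log x} + O(a_j/\log x)$, and since $\log^+ T = \log^+ q^A = A \log q$ (for $q$ large), summing gives $\frac{(1+A)(\log q)}{\log x}\sum_j a_j + O(1) \leq (A+1) a \frac{\log q}{\log x} + O(1)$ with $a = a_1 + \cdots + a_{2k} + 10$, the ``$+10$'' comfortably absorbing the $O(1/\log x)$ slack and any rounding. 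The main Dirichlet-polynomial terms combine into
\[
\Re \sum_{n \leq x} \frac{\Lambda(n)\chi(n)}{n^{1/2 + 1/\log x}\log n}\frac{\log x/n}{\log x}\Big(a_1 n^{-it_1} + \cdots + a_{2k} n^{-it_{2k}}\Big) = 2\,\Re \sum_{n \leq x} \frac{h(n)\chi(n)\Lambda(n)}{n^{1/2+1/\log x}\log n}\frac{\log x/n}{\log x},
\]
using the definition $h(n) = \tfrac12(a_1 n^{-it_1} + \cdots + a_{2k} n^{-it_{2k}})$.

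Next I would split this von~Mangoldt sum according to whether $n = p$, $n = p^2$, or $n = p^m$ with $m \geq 3$. For $n = p$ we have $\Lambda(p)/\log p = 1$, giving the term $2\,\Re \sum_{p \leq x} \frac{h(p)\chi(p)}{p^{1/2+1/\log x}}\frac{\log x/p}{\log x}$ as desired. For $n = p^2$ we have $\Lambda(p^2)/\log p^2 = 1/2$, the modulus is $p^{1+2/\log x}$, and the factor $\frac{\log x/p^2}{\log x}$ deviates from $1$ by $O(\log p/\log x)$; so $2\,\Re \sum_{p \leq x^{1/2}} \frac{h(p^2)\chi(p^2)}{2\, p^{1+2/\log x}}\frac{\log(x/p^2)}{\log x}$ equals $\Re \sum_{p \leq x^{1/2}} \frac{h(p^2)\chi(p^2)}{p} + O(1)$, since $|h(p^2)| \leq \tfrac12 a$ is bounded and $\sum_p \frac{1}{p}\big(1 - p^{-2/\log x}\big)$ together with $\sum_p \frac{\log p}{p \log x}$ are both $O(1)$ (the latter by $\sum_{p \leq x^{1/2}} \frac{\log p}{p} \ll \log x$ via Mertens). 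For $m \geq 3$, the tail $\sum_m \sum_{p \leq x^{1/m}} \frac{|h(p^m)|}{m\, p^{m/2}}$ is bounded by $\tfrac12 a \sum_{m \geq 3} \sum_p p^{-m/2} \ll a \sum_p p^{-3/2} = O(1)$, so it is absorbed into the $O(1)$.

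The steps here are essentially bookkeeping, so I do not anticipate a genuine obstacle; the only point requiring a little care is the manipulation of the weight $\frac{\log(x/p^2)}{\log x}$ and the factor $p^{-2/\log x}$ in the $p^2$-term, where one must confirm that replacing them by $1$ costs only $O(1)$ uniformly in $x$ — this is where the boundedness of $h$ on prime powers and Mertens-type estimates are used. One should also note that when $x$ is small (say $x < 4$) the $p^2$-sum is empty and the bound is trivial, and that the constant in the final $O(1)$ depends only on $k$, $A$, and the $a_j$, consistent with the conventions already fixed in the statement. Assembling the three pieces gives exactly \eqref{mainupper}.
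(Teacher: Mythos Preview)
Your proposal is correct and follows essentially the same approach as the paper: apply Lemma~\ref{Lupper} at each shift $t_j$, form the $a_j$-weighted sum, and split the resulting von~Mangoldt sum into primes, prime squares, and higher prime powers. The paper's proof is a one-line remark recording exactly this decomposition; you have simply spelled out the routine estimates (handling the weight $\frac{\log(x/p^2)}{\log x}$, the factor $p^{-2/\log x}$, and the error term $(A+1)a\frac{\log q}{\log x}$) in more detail.
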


\begin{proof}
This is an immediate consequence of (\ref{upperbound}), since for each $1\leq j\leq 2k$ we have
$$\Re\sum_{n\leq x}\frac{\chi(n)\Lambda(n)}{n^{1/2+1/\log x +it_j} \log n}\frac{\log x/n}{\log x}=\Re \sum_{p\leq x}\frac{\chi(p)p^{-it_j} }{p^{1/2+1/\log x} }\frac{\log x/p}{\log x}+\frac{1}{2}\Re \sum_{p\leq x^{1/2}}\frac{\chi(p^2)p^{-2it_j} }{p}+O(1).$$

\end{proof}

 We now introduce a few definitions which will enable us to bound the Dirichlet polynomial in an effective way. Our way of treating the problem comes from \cite{harpersharp}.

Define $\beta_0=0$, $\beta_i=\frac{20^{i-1}}{(\log\log q)^2}$ for $i\geq 1$, let $\mathcal{I}=1+\max \{i: \beta_i\leq e^{-10000a^2(A+1)}\}$. Note that $\beta_1<\beta_2<\ldots <\beta_{\mathcal{I}}$ form a geometric progression, and $\beta_{\mathcal{I}}$ should be thought of as a small fixed constant. Recall that $h(p)=\frac{1}{2}(a_1p^{-it_1}+\cdots+a_{2k}p^{-it_{2k}})$. For any $1\leq i\leq j\leq \mathcal{I}$ let 
$$G_{(i,j)}(\chi)=\sum_{q^{\beta_{i-1}}<p\leq q^{\beta_i}} \frac{\chi(p)h(p)}{p^{1/2+1/\beta_{j}\log q } } \frac{\log( q^{\beta_j }/p )}{\log( q^{\beta_j})}.$$
Here the motivation is that in (\ref{mainupper}) we choose $x=q^{\beta_j}$ and we cut up our main Dirichlet polynomial into smaller pieces where in $G_{(i,j)}(\chi)$ we sum over the primes between $q^{\beta_{i-1}}$ and $q^{\beta_i}$.
Define
$$\mathcal{T}=\{ \chi \in X_q^* : |\Re G_{(i,\mathcal{I})}( \chi )|\leq \beta_i^{-3/4}, \forall 1\leq i\leq \mathcal{I} \},$$
and for each $0\leq j<\mathcal{I}$ let
$$\mathcal{S}(j)=\{\chi \in X_q^*:|\Re G_{(i,l)}(\chi)|\leq \beta_{i}^{-3/4} \; \forall 1\leq i\leq j, \, \forall i\leq l\leq \mathcal{I}\, \text{but}\, |\Re G_{(j+1,l)}(\chi)|>\beta_{j+1}^{-3/4} \, \text{for some}\, j+1\leq l\leq \mathcal{I} \}.$$

We now state three lemmas which we prove in the next section. After the statements we show how they imply Theorem \ref{t1}.

\begin{lemma}
\label{mainfirst}
We have
$$\sum_{\chi \in \mathcal{T}}\exp^2\bigg(  \Re \sum_{p\leq q^{\beta_{\mathcal{I}}}}  \frac{\chi(p)h(p)}{p^{1/2+1/(\beta_{\mathcal{I}}\log q ) }} \frac{\log( q^{\beta_{\mathcal{I} } }/p )}{\log( q^{\beta_{\mathcal{I}}})}\bigg)\ll \phi(q)(\log q)^{(a_1^2+\cdots +a_{2k}^2)/4} \prod_{1\leq i<j\leq 2k} g(|t_i-t_j|)^{a_ia_j/2} $$ 
\end{lemma}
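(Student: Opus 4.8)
\textbf{Proof proposal for Lemma \ref{mainfirst}.}

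The plan is to reduce the exponential sum over $\mathcal{T}$ to a moment computation over all of $X_q^*$, which can then be evaluated by orthogonality of characters. The starting point is that for $\chi \in \mathcal{T}$ each block $D(i,\chi) := \Re G_{(i,\mathcal{I})}(\chi)$ satisfies $|D(i,\chi)| \leq \beta_i^{-3/4}$, so the exponential factorizes as
$$\exp^2\Bigl( \Re \sum_{p\leq q^{\beta_{\mathcal{I}}}} \frac{\chi(p)h(p)}{p^{1/2+1/(\beta_{\mathcal{I}}\log q)}} \frac{\log(q^{\beta_{\mathcal{I}}}/p)}{\log(q^{\beta_{\mathcal{I}}})}\Bigr) = \prod_{i=1}^{\mathcal{I}} e^{2D(i,\chi)},$$
and since $|D(i,\chi)| \leq \beta_i^{-3/4}$ I can truncate the Taylor series of $e^{2D(i,\chi)}$ at degree $\lfloor C\beta_i^{-3/4}\rfloor$ for a suitable absolute constant $C$, incurring only a negligible error (this is exactly the truncation step flagged in the overview, justified by the standard bound $\sum_{n > C\lambda} \lambda^n/n! \ll e^{-\lambda}$ when $C$ is large). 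After truncating and dropping the restriction $\chi \in \mathcal{T}$ (all summands are then nonnegative, being squares of real quantities), I am left with
$$\ll \sum_{\chi \in X_q^*} \prod_{i=1}^{\mathcal{I}} \Bigl( \sum_{0 \leq n \leq C\beta_i^{-3/4}} \frac{D(i,\chi)^n}{n!}\Bigr)^2 + (\text{error}).$$
The crucial observation, as in Harper's argument, is that the product on the right is a Dirichlet polynomial in $\chi$ supported on integers of size at most $\prod_i q^{20 \beta_i \cdot C\beta_i^{-3/4}} = q^{o(1)} \cdot q^{C\sum_i \beta_i^{1/4} \cdot 20}$; since $\sum_i \beta_i^{1/4}$ converges (the $\beta_i$ grow geometrically) and $\beta_{\mathcal{I}}$ is a small fixed constant, the total length is $\leq q^{1-\delta}$, strictly less than $q$.

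Next I expand $D(i,\chi)^n$ using the multinomial theorem: each $D(i,\chi) = \frac12\sum_{q^{\beta_{i-1}}<p\leq q^{\beta_i}}(w(p)\chi(p) + \overline{w(p)}\,\overline{\chi(p)})$ where $w(p)$ absorbs the weights $h(p) p^{-1/2-1/(\beta_{\mathcal{I}}\log q)}\log(q^{\beta_{\mathcal{I}}}/p)/\log(q^{\beta_{\mathcal{I}}})$, so $D(i,\chi)^n$ becomes a sum over tuples of primes from the $i$-th block, weighted and twisted by $\chi$ evaluated at products/quotients of those primes. Multiplying over all $i$ and squaring, I get $\sum_\chi \chi(m)\overline{\chi(m')}$ with $m, m' < q^{1/2}$ say (each $< q^{1-\delta/2}$ suffices), and by orthogonality $\sum_{\chi \in X_q^*}\chi(m)\overline{\chi(m')} = \phi^*(q)$ when $m \equiv m' \pmod q$ and is otherwise controlled; since $m, m'$ are both smaller than $q$, the congruence forces $m = m'$ up to the standard bookkeeping with the principal character and divisibility by $q$. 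This collapses the whole expression to a "diagonal" sum, which is $\phi(q)$ times a main term that I can write as a product over blocks of the second-moment-type quantity $\sum_n \frac{1}{(n!)^2}\bigl(\text{diagonal of } |D(i,\chi)|^{2}\bigr)^n$-flavored expression. Carrying out the combinatorics (essentially recognizing a truncated $\exp$ of the diagonal), the diagonal of the $i$-th block contributes $\exp\bigl(\sum_{q^{\beta_{i-1}}<p\leq q^{\beta_i}} \frac{|h(p)|^2}{p}(1+o(1))\bigr)$, and multiplying over $i$ gives
$$\ll \phi(q)\exp\Bigl( \sum_{p \leq q^{\beta_{\mathcal{I}}}} \frac{|h(p)|^2}{p} + O(1)\Bigr).$$

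Finally I evaluate the exponent. Expanding $|h(p)|^2 = \frac14\bigl(a_1^2 + \cdots + a_{2k}^2\bigr) + \sum_{1\leq i<j\leq 2k}\frac{a_i a_j}{2}\cos(|t_i - t_j|\log p)$ and using Mertens' estimate $\sum_{p\leq x}1/p = \log\log x + O(1)$ together with Lemma \ref{mertenstype} for each cosine sum, I obtain
$$\sum_{p\leq q^{\beta_{\mathcal{I}}}}\frac{|h(p)|^2}{p} \leq \frac{a_1^2+\cdots+a_{2k}^2}{4}\log\log q + \sum_{1\leq i<j\leq 2k}\frac{a_i a_j}{2}\log g(|t_i-t_j|) + O(1),$$
where I have used that $\log\log q^{\beta_{\mathcal{I}}} = \log\log q + O(1)$ since $\beta_{\mathcal{I}}$ is a fixed constant, and that Lemma \ref{mertenstype} applied at $x = q^{\beta_{\mathcal{I}}}$ (with $\alpha = |t_i - t_j| \leq 2q^A \leq e^x$ in the relevant range) yields precisely $\log g(|t_i - t_j|) + O(1)$ after matching the three cases of $g$ against the three cases of the lemma. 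Exponentiating gives the claimed bound $\phi(q)(\log q)^{(a_1^2+\cdots+a_{2k}^2)/4}\prod_{i<j} g(|t_i-t_j|)^{a_i a_j/2}$.

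\textbf{Main obstacle.} The routine-but-delicate part is the orthogonality/diagonal step: I must check that the Dirichlet polynomial really has length $< q$ (this is why the exponents $3/4$ and the geometric growth of the $\beta_i$ are chosen as they are — $\sum_i \beta_i^{1/4}$ must converge and stay small), and that the off-diagonal contributions and the higher-prime-power term $\sum_{p^2} h(p^2)\chi(p^2)/p$ from \eqref{mainupper} genuinely contribute only $O(1)$ to the exponent rather than something growing with $q$. Equally, verifying that the truncated-exponential combinatorics reassembles into $\exp(\sum |h(p)|^2/p)$ with an error of the right (constant) order — rather than, say, an error of size $\sum \beta_i^{-3/4}$ which would be fatal — requires care: one needs the truncation level $C\beta_i^{-3/4}$ to be large enough relative to the "variance" $\sum_{q^{\beta_{i-1}}<p\leq q^{\beta_i}}|h(p)|^2/p = O(\log 20) = O(1)$ of each block, so that each block behaves like a genuine (sub-)Gaussian moment. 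This is the heart of Harper's method and where the bulk of the technical work lies.
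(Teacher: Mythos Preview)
Your proposal is correct and follows essentially the same approach as the paper: truncate each block's exponential using the $\beta_i^{-3/4}$ bound, extend the sum to all characters by nonnegativity of squares, expand via $\Re z = \tfrac12(z+\bar z)$ and use orthogonality (the short-polynomial condition $\prod p^{\pm 1}\equiv 1\pmod q$ forcing actual equality), then carry out the diagonal combinatorics to arrive at $\phi(q)\exp\bigl(\sum_{p\le q^{\beta_{\mathcal I}}}|h(p)|^2/p\bigr)$ and finish with Lemma~\ref{mertenstype}. One small note: the prime-square contribution you flag in your ``Main obstacle'' is not part of Lemma~\ref{mainfirst} at all --- it is handled separately in Lemma~\ref{important} --- so you can drop that concern here.
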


\begin{lemma}
\label{mainsecond}
We have $|\mathcal_{S}(0)|\ll qe^{-(\log\log q)^2}$ and for $1\leq j\leq \mathcal{I}-1$ we have
\begin{multline*}
    \sum_{\chi \in \mathcal{S}(j)}\exp^2\bigg( \Re \sum_{p\leq q^{\beta_j}} \frac{\chi(p)h(p)}{p^{1/2+1/(\beta_j\log q)}}\frac{\log(q^{\beta_j}/p)}{\log(q^{\beta_j}) }\bigg)\ll \\
 e^{-\beta_{j+1}^{-1} \log (\beta_{j+1}^{-1})/200 } \phi(q)(\log q)^{(a_1^2+\cdots +a_{2k}^2)/4} \prod_{1\leq i<j\leq 2k} g(|t_i-t_j|)^{a_ia_j/2} 
\end{multline*}
\end{lemma}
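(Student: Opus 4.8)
The plan is to run the large--deviations scheme of \cite{harpersharp}, which reduces everything to mean--value estimates for short Dirichlet polynomials over $X_q^*$ (in the form of Lemma 2.8 of \cite{munschshifted}) together with the Mertens--type bound of Lemma \ref{mertenstype}. The estimate $|\mathcal S(0)|\ll q\exp(-(\log\log q)^2)$ is the soft part: a character lies in $\mathcal S(0)$ only if $|\Re G_{(1,l)}(\chi)|>\beta_1^{-3/4}=(\log\log q)^{3/2}$ for some $l$, whereas $\Re G_{(1,l)}(\chi)$ is a Dirichlet polynomial over primes up to $q^{\beta_1}$ whose variance $\sum_{p\le q^{\beta_1}}|h(p)|^2/p$ is only $O(\log\log q)$, so being in $\mathcal S(0)$ forces $\Re G_{(1,l)}(\chi)$ to exceed its typical size by a factor $\gg\log\log q$. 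I would fix an even integer $2m$ with $m\asymp(\log\log q)^2$ (the largest value keeping $(\Re G_{(1,l)})^{2m}$ of length $<q^{1/10}$), use orthogonality to get $\sum_{\chi\in X_q^*}(\Re G_{(1,l)}(\chi))^{2m}\ll\phi(q)\,m!\,\bigl(\sum_{p\le q^{\beta_1}}|h(p)|^2/p\bigr)^m$, union--bound over the $O(\mathcal I)$ values of $l$, and apply Markov's inequality; the ratio $m!(C\log\log q)^m/(\log\log q)^{3m}$ then decays super-polynomially in $\log q$, which gives the bound.

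For $1\le j\le\mathcal I-1$ the argument is the heart of the matter. First, $\Re\sum_{p\le q^{\beta_j}}\tfrac{\chi(p)h(p)}{p^{1/2+1/(\beta_j\log q)}}\tfrac{\log(q^{\beta_j}/p)}{\log q^{\beta_j}}=\sum_{i=1}^{j}\Re G_{(i,j)}(\chi)$, so the quantity to bound is $\sum_{\chi\in\mathcal S(j)}\prod_{i=1}^{j}\exp(2\Re G_{(i,j)}(\chi))$. On $\mathcal S(j)$ each $2\Re G_{(i,j)}(\chi)$ is a real number of size $\le 2\beta_i^{-3/4}$, so truncating its exponential series at $N_i:=\lfloor 100\beta_i^{-3/4}\rfloor$ costs only a negligible amount and gives $\exp(2\Re G_{(i,j)}(\chi))\le Q_i(\chi)+\epsilon_i$, where $Q_i(\chi):=\sum_{n\le N_i}\tfrac{(2\Re G_{(i,j)}(\chi))^n}{n!}$ is a Dirichlet polynomial of length $\le q^{100\beta_i^{1/4}}$ (tiny, since $\beta_i\le\beta_{\mathcal I}$ is a small fixed constant) and $\epsilon_i$ is super--small. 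Since $\chi\in\mathcal S(j)$ also forces $|\Re G_{(j+1,l)}(\chi)|>\beta_{j+1}^{-3/4}$ for some $l$, I would bound the indicator of $\mathcal S(j)$ by $\beta_{j+1}^{3m/2}\sum_{l=j+1}^{\mathcal I}(\Re G_{(j+1,l)}(\chi))^{2m}$ for a suitable $m\asymp\beta_{j+1}^{-1}$, chosen small enough that the whole product of Dirichlet polynomials still has length $<q$. Dropping the restriction to $\mathcal S(j)$, this reduces the problem to bounding $\beta_{j+1}^{3m/2}\sum_{l}\sum_{\chi\in X_q^*}\bigl(\prod_{i\le j}(Q_i(\chi)+\epsilon_i)\bigr)(\Re G_{(j+1,l)}(\chi))^{2m}$.

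Now expand and apply orthogonality of primitive characters. Because the prime supports of the $Q_i$ and of $\Re G_{(j+1,l)}$ are pairwise disjoint and the total length is $<q$, the diagonal term factors, and up to a negligible off--diagonal error the inner sum equals $\phi(q)\prod_{i=1}^{j}\mathbb{E}[Q_i]\cdot\mathbb{E}[(\Re G_{(j+1,l)})^{2m}]$, where $\mathbb{E}$ replaces each $\chi(p)$ by an independent uniform phase. The first factor is handled exactly as in the proof of Lemma \ref{mainfirst}: a Gaussian--moment bound gives $\mathbb{E}[Q_i]\le\exp(\sigma_i^2)$ with $\sigma_i^2\le\sum_{q^{\beta_{i-1}}<p\le q^{\beta_i}}|h(p)|^2/p$, so $\prod_i\mathbb{E}[Q_i]\le\exp\bigl(\sum_{p\le q^{\beta_j}}|h(p)|^2/p\bigr)$, and expanding $|h(p)|^2$ and using Mertens together with Lemma \ref{mertenstype} (observing that $g$ with parameter $q^{\beta_j}$ is dominated, range by range, by $g$ with parameter $q$) bounds this by $e^{O(1)}(\log q)^{(a_1^2+\cdots+a_{2k}^2)/4}\prod_{1\le r<s\le 2k}g(|t_r-t_s|)^{a_ra_s/2}$, the main term. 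For the second factor, $\mathbb{E}[(\Re G_{(j+1,l)})^{2m}]\le m!\,C^m$, where $C\asymp a^2$ bounds $\sum_{q^{\beta_j}<p\le q^{\beta_{j+1}}}|h(p)|^2/p\ (\asymp\log 20)$; hence $\beta_{j+1}^{3m/2}m!\,C^m\le(Cm\beta_{j+1}^{3/2})^m$, and the choice $m=\lfloor c\beta_{j+1}^{-1}\rfloor$ for a small absolute $c$ makes this $\le\exp(-\beta_{j+1}^{-1}\log\beta_{j+1}^{-1}/100)$. Finally, the sum over the $\mathcal I-j$ admissible values of $l$ loses only a factor $O(\mathcal I-j)$, which the saving absorbs because $\beta_{j+1}^{-1}$ exceeds a fixed constant times $20^{\mathcal I-1-j}$, so that $\beta_{j+1}^{-1}\log\beta_{j+1}^{-1}$ grows geometrically in $\mathcal I-j$; this produces the claimed bound with exponent $1/200$.

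The main obstacle is precisely the bookkeeping around these error and saving terms. One must fix $m$, the truncation lengths $N_i$, and the constant defining $\mathcal I$ compatibly, so that every Dirichlet polynomial that enters orthogonality has length comfortably below $q$; one must verify that the truncation errors $\epsilon_i$, the off--diagonal error from orthogonality, and the overhead coming from the union over $l$ and from expanding $\prod_i(Q_i+\epsilon_i)$ are all genuinely negligible against the saving $\exp(-\beta_{j+1}^{-1}\log\beta_{j+1}^{-1}/200)$ for every admissible $j$ (this is where the geometric growth of $\beta_{j+1}^{-1}$ in $\mathcal I-j$ is indispensable); and one must track the Mertens--type estimates carefully enough to recover exactly the correlation function $g$ of Theorem \ref{t1}. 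None of this is conceptually deep, but it is where all the care resides.
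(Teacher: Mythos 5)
Your plan follows the paper's proof step for step: split $\mathcal S(j)=\bigcup_{l>j}\mathcal S(j,l)$, insert the Chebyshev weight $\bigl(\beta_{j+1}^{3/4}\Re G_{(j+1,l)}(\chi)\bigr)^{2m}\ge 1$ on $\mathcal S(j,l)$, truncate the exponentials, drop the restriction to $\mathcal S(j,l)$, apply orthogonality under a total-length-less-than-$q$ constraint, and take $m\asymp\beta_{j+1}^{-1}$; the $|\mathcal S(0)|$ bound is then the degenerate case in which the Dirichlet polynomial over $p\le q^{\beta_0}$ is empty, which is precisely the content of your Markov argument (the sum over the $O(\mathcal I)$ values of $l$ is absorbed in both treatments). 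The one place your sketch would not survive being written out is the truncation step. You truncate $\exp(2\Re G_{(i,j)})$ directly to a polynomial $Q_i:=\sum_{n\le N_i}(2\Re G_{(i,j)})^n/n!$; but $Q_i$ is a real-valued polynomial in $\chi$ that is not sign-definite on $X_q\setminus\mathcal S(j)$, so the passage from $\sum_{\chi\in\mathcal S(j)}\prod_i(Q_i+\epsilon_i)(\Re G_{(j+1,l)})^{2m}$ to $\sum_{\chi\in X_q}\prod_i(Q_i+\epsilon_i)(\Re G_{(j+1,l)})^{2m}$ has no justification. The paper instead uses $\exp(2\Re G_{(i,j)})=\exp^2(\Re G_{(i,j)})$, truncates the \emph{inner} exponential, and keeps the outer square, producing the manifestly nonnegative polynomial $\bigl(\sum_{n\le N_i}(\Re G_{(i,j)})^n/n!\bigr)^2$; that nonnegativity is exactly what licenses extending the sum from $\mathcal S(j,l)$ to all of $X_q$. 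With that small but necessary fix your argument coincides with the paper's.
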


\begin{lemma}
\label{important}
The statements of the previous two lemmas remain true if we replace the Dirichlet polynomial by
$$\Re \sum_{p\leq q^{\beta_j}} \frac{\chi(p)h(p)}{p^{1/2+1/(\beta_j\log q)}}\frac{\log(q^{\beta_j}/p)}{\log(q^{\beta_j}) }+\frac{1}{2}\Re \sum_{p\leq q^{\beta_j /2}}\frac{\chi(p^2)h(p^2) }{p}.$$
\end{lemma}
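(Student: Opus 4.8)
The plan is to show that adding the prime-square term $\frac{1}{2}\Re\sum_{p\le q^{\beta_j/2}}\frac{\chi(p^2)h(p^2)}{p}$ inside the $\exp^2(\cdot)$ costs only a multiplicative constant, which is absorbed into the implied constants of Lemmas \ref{mainfirst} and \ref{mainsecond}. First I would split this secondary sum at a fixed height $P_0$ depending only on $k$ and the $a_j$: the tail $\sum_{P_0<p\le q^{\beta_j/2}}\frac{\chi(p^2)h(p^2)}{p}$ is, after a Cauchy–Schwarz / orthogonality argument over $\chi\in X_q^*$ entirely analogous to the one used to prove Lemma \ref{mainfirst}, of size $O(1)$ on average with overwhelming probability, since $|h(p^2)|\le a/2$ and $\sum_{p>P_0}\frac{1}{p^{2}}$ is as small as we like. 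More carefully: since $\chi(p^2)=\chi(p)^2$, this is a genuine (short) Dirichlet polynomial in $\chi$, and one can bound its contribution by introducing an auxiliary set of ``bad'' characters on which $|\Re\sum_{p}\chi(p^2)h(p^2)/p|$ is large, show that set is tiny by a high-moment (Markov) estimate exactly as in the proof of Proposition \ref{crude_prop}, and note that $\exp^2$ of a bounded quantity is $O(1)$ on the complement.

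The genuinely finite piece $\frac12\Re\sum_{p\le P_0}\frac{\chi(p^2)h(p^2)}{p}$ is trickier to handle ``for free'' because it is not small — but it involves only finitely many primes $p\le P_0$, hence $\exp$ of it is $\le \exp(O_{P_0}(1))$, a constant depending only on $k$, $A$, the $a_j$ (through $P_0$ and $a$), which is exactly the type of constant we are allowed to lose. So $\exp^2$ of the full modified polynomial is at most $\exp(O(1))$ times $\exp^2$ of the original polynomial plus $\exp^2$ of the bad event, and summing over $\chi$ in $\mathcal{T}$ (resp. $\mathcal{S}(j)$) and invoking Lemmas \ref{mainfirst} and \ref{mainsecond} for the first term, and the crude bounds of Proposition \ref{crude_prop} together with the smallness of the bad set for the second, yields the claim with the same right-hand side up to the implied constant. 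The ranges $p\le q^{\beta_j/2}$ pose no extra difficulty: $q^{\beta_j/2}\le q^{\beta_{\mathcal I}/2}$ and the relevant sum $\sum_{p\le q^{\beta_j/2}}1/p^{1+2/\log x}$ stays $O(1)$ on the tail past $P_0$, uniformly in $j$.

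The main obstacle I anticipate is making the ``bad character'' argument genuinely uniform in $j$ and in the shifts $t_i$, i.e. controlling $\sum_{\chi\in X_q^*}\exp^2\!\big(\text{original poly}\big)\cdot\mathbf{1}_{\chi \text{ bad}}$ rather than just the measure of the bad set; one wants an $L^1$-type statement, not merely that few characters are bad. The clean way around this is Cauchy–Schwarz: bound that sum by $\big(\sum_{\chi}\exp^4(\text{poly})\big)^{1/2}\big(\#\{\text{bad }\chi\}\big)^{1/2}$, use Proposition \ref{crude_prop} (applied with doubled exponents, still a constant power of $\log q$) for the first factor, and the exponentially small bound $\#\{\text{bad}\}\ll \phi(q)e^{-cV}$ from the Markov estimate for the second, with $V$ taken a large constant multiple of $\log\log\log q$ or so — enough to kill the polynomial-in-$\log q$ loss and leave a quantity far smaller than the main term $\phi(q)(\log q)^{(a_1^2+\cdots+a_{2k}^2)/4}\prod g(|t_i-t_j|)^{a_ia_j/2}$. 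Once that is in place the rest is bookkeeping, and I would present it by first stating an auxiliary sublemma isolating the cost of the $p^2$-term and then applying it verbatim in both the $\mathcal{T}$ and $\mathcal{S}(j)$ settings.
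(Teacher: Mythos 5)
Your plan overlooks the quadratic character $\chi$, for which $\chi(p^2)=1$ for $p\nmid q$ and hence $\Re\sum_{p\le q^{\beta_j/2}}\chi(p^2)h(p^2)/p$ can be of size $\asymp\log\log q$ with no cancellation whatsoever; the variance heuristic fails for this single character, and it must be removed separately (the paper does so with a pointwise bound on $|L(1/2+it_j,\chi)|$, since at most one such $\chi$ exists). More seriously, the ``constant threshold plus Markov plus Cauchy--Schwarz'' scheme has an internal tension you do not resolve: for the non-bad characters to contribute only $O(1)\cdot\exp^2(\text{original polynomial})$ you need the threshold $V$ to be $O(1)$, but then the bad set is only $\ll\phi(q)e^{-cV^2P_0}$ with $V,P_0$ fixed, a constant fraction of $\phi(q)$, far too large to absorb the polylogarithmic loss from Cauchy--Schwarz against a crude fourth moment; whereas taking $V\asymp\log\log\log q$ (as you later suggest) shrinks the bad set but makes $e^{2V}=(\log\log q)^{O(1)}$ on the complement, no longer $O(1)$. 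There is also a direction error: Proposition \ref{crude_prop} cannot bound $\sum_\chi\exp^4(\text{poly})$, because the inequality from Lemma \ref{Lupper} runs $|L|\lesssim\exp(\text{poly})$ and not the reverse; such a moment must be bounded directly by orthogonality, not through $|L|$.

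The paper resolves all of this with a dyadic decomposition. Having excluded the quadratic character, it uses GRH and an explicit-formula bound to show the prime-square sum over $p>\log q$ is $O(1)$ pointwise, truncating at $\log q$. It then cuts $[2,\log q]$ into dyadic blocks $[2^m,2^{m+1}]$ with partial sums $P_m(\chi)$ and defines $\mathcal{P}(m)$ as the characters where $|\Re P_m|$ is large but all higher blocks are small. The crucial step missing from your sketch is that for $\chi\in\mathcal{P}(m)$ one also discards the portion of the \emph{main} prime polynomial with $p\le 2^{m+1}$, bounding it trivially by $\ll 2^{m/2}$, and observes that since $2^{m/2}=o(\beta_0^{-3/4})$ the remaining truncated polynomial for $\chi\in\mathcal{T}$ still meets the $\mathcal{T}$-type threshold, so the orthogonality argument of Lemma \ref{mainfirst} reruns verbatim. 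The Markov estimate gives $\#\mathcal{P}(m)\ll\phi(q)e^{-2^{3m/4}}$, which overwhelms the trivial loss $e^{C2^{m/2}}$; summing over $m\le\frac{3}{2\log 2}\log\log\log q$ closes the argument with no Cauchy--Schwarz step at all.
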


We know show how these lemmas imply Theorem \ref{t1}. We have
$$X_q^*=\mathcal{T}\cup \bigcup_{j=0}^{\mathcal{I}-1} \mathcal{S}(j).$$
For simplicity let us denote
\begin{equation}
\label{Bdef}
    B:= \phi(q)(\log q)^{(a_1^2+\cdots +a_{2k}^2)/4} \prod_{1\leq i<j\leq 2k} g(|t_i-t_j|)^{a_ia_j/2}.
\end{equation}
By  Proposition 1 and Lemma \ref{important} we have
\begin{align*}
   & \sum_{\chi \in \mathcal{T}}  \big| L\big(1/2+it_1,\chi \big) \big|^{a_1} \cdots \big| L\big(1/2+it_{2k},\chi \big) \big|^{a_{2k}} \\
   = &\sum_{\chi \in \mathcal{T}} \exp\bigg(  a_1\log |L(1/2+it_1,\chi)|+\cdots +a_{2k}\log |L(1/2+it_{2k},\chi)| \bigg) \\
    \leq &  \sum_{\chi\in \mathcal{T} } \exp^2\bigg(\Re \sum_{p\leq q^{\beta_{\mathcal{I}} } }  \frac{\chi(p)h(p)}{p^{1/2+1/\beta_{\mathcal{I}}\log q }  } \frac{\log( q^{\beta_{\mathcal{I} } }/p )}{\log( q^{\beta_{\mathcal{I}}})}+\frac{1}{2}\Re\sum_{p\leq q^{\beta_{\mathcal{I}}/2 }} \frac{\chi(p^2)h(p^2) }{p }+(A+1) \frac{a}{\beta_{\mathcal{I}}} +O(1) \bigg) \\
    \ll & B \\
\end{align*}
For each $1\leq j\leq \mathcal{I}-1$ we have $\beta_{j+1}^{-1} \log (\beta_{j+1}^{-1})/200\geq 2a(A+1)/\beta_j$, so
\begin{align*}
     \sum_{j=1}^{\mathcal{I}-1} \sum_{\chi \in \mathcal{S}(j)}  \big| L\big(1/2+it_1,\chi \big) \big|^{a_1} \cdots \big| L\big(1/2+it_{2k},\chi \big) \big|^{a_{2k}} 
    \ll & B \sum_{j=1}^{\mathcal{I}-1}\exp\bigg( \frac{a}{\beta_j}(A+1) -\beta_{j+1}^{-1} \log (\beta_{j+1}^{-1})/200  \bigg) \\
    \ll &  B \sum_{j=1}^{\mathcal{I}-1}\exp\bigg(-\frac{a}{\beta_j}(A+1) \bigg) \\
    \ll & B \\
\end{align*}
as the $\beta_j$ form a geometric progression.

Using the bound on $|\mathcal{S}(0)|$ from Lemma \ref{mainsecond} and Proposition \ref{crude_prop}, by Cauchy-Schwarz we get
\begin{align*}
    \sum_{\chi \in \mathcal{S}(0)}  \big| L\big(1/2+it_1,\chi \big) \big|^{a_1} \cdots \big| L\big(1/2+it_{2k},\chi \big) \big|^{a_{2k}} & \leq |\mathcal{S}(0)|^{1/2} \bigg( \sum_{\chi \in \mathcal X_q^*}  \big| L\big(1/2+it_1,\chi \big) \big|^{2a_1} \cdots \big| L\big(1/2+it_{2k},\chi \big) 
\big|^{2a_{2k}}\bigg)^{1/2} \\
 & \ll  \phi(q)e^{-(\log\log q)^2/10}(\log q)^{O(1)}\ll B. \\
\end{align*}
Adding up the last three inequalities gives us Theorem \ref{t1}.

\section{Proof of the main lemmas}

\begin{proof}[Proof of Lemma \ref{mainfirst}]

For simplicity write
$$F_i( \chi):=G_{(i,\mathcal{I})}(\chi)=\sum_{q^{\beta_{i-1}}<p\leq q^{\beta_i}} \frac{\chi(p)h(p)}{p^{1/2+1/\beta_{\mathcal{I}}\log q } } \frac{\log( q^{\beta_{\mathcal{I} } }/p )}{\log( q^{\beta_{\mathcal{I}}})}.$$
By defintion, for every $\chi\in \mathcal{T}$ and $1\leq i\leq \mathcal{I}$, we have $|\Re F_i(\chi)|\leq \beta_i^{-3/4}$. Using Stirling's formula, if $\chi\in \mathcal{T}$, then
\begin{equation*}
    \exp(\Re F_i(\chi))=\sum_{0\leq j\leq 100\beta_i^{-3/4}}\frac{(\Re F_i(\chi))^j}{j!}+O\big(e^{-200\beta_i^{-3/4}}\big), 
\end{equation*}
say. Since $\exp(\Re F_i(\chi))\geq e^{-\beta_i^{-3/4}}$, we have
\begin{equation*}
    \exp(\Re F_i(\chi))=\big(1+O(e^{-100\beta_i^{-3/4}})\big)\sum_{0\leq j\leq 100\beta_i^{-3/4}}\frac{(\Re F_i(\chi))^j}{j!}.
\end{equation*}
Therefore
\begin{align*}
     \sum_{\chi \in \mathcal{T}}\exp^2\bigg(  \Re \sum_{p\leq q^{\beta_{\mathcal{I}}}}  \frac{\chi(p)h(p)}{p^{1/2+1/\beta_{\mathcal{I}}\log q  }} \frac{\log( q^{\beta_{\mathcal{I} } }/p )}{\log( q^{\beta_{\mathcal{I}}})}\bigg) 
    = &    \sum_{\chi\in \mathcal{T} } \exp^2\bigg( \sum_{1\leq i\leq \mathcal{I}} \Re F_i( \chi) \bigg) \\
\ll &  \sum_{\chi\in \mathcal{T} }  \prod_{1\leq i \leq \mathcal{I}} \bigg[\sum_{0\leq j \leq 100\beta_i^{-3/4}} \frac{(\Re F_i( \chi) )^j}{ j!}  \bigg]^2 \\
\leq & \sum_{\chi\in X_q }  \prod_{1\leq i \leq \mathcal{I}} \bigg[\sum_{0\leq j \leq 100\beta_i^{-3/4}} \frac{(\Re F_i( \chi) )^j}{ j!}  \bigg]^2. \\
\end{align*}
Notice that in the last line we are summing over all $\chi\in X_q$ instead of just $\chi \in \mathcal{T}$. This can be done since the square of a real number is always non-negative. This is why we have put our expressions in the form $\exp^2(\cdots)$ instead of $\exp(\cdots)$. Now we change the order of summation and have $\sum_{\chi \in X_q}$ as the innermost sum so we can use the orthogonality of characters to get cancellation. We find that the last expression above is equal to
\begin{equation}
\label{bigsum}
      \sum_{\Tilde{j},\Tilde{l}}\bigg( \prod_{1\leq i \leq \mathcal{I}} \frac{1}{j_i!}\frac{1}{l_i!} \bigg)\sum_{\Tilde{p},\Tilde{q}} C(\Tilde{p},\Tilde{q})\sum_{\chi \in X_q  } \prod_{1\leq i \leq \mathcal{I}}\prod_{\substack{1\leq r\leq j_i \\ 1\leq s\leq l_i}}   \big( \Re\chi(p_{i,r} ) h(p_{i,r} ) \big) \big( \Re\chi(q_{i,s} ) h(p_{i,s})  \big). 
\end{equation}

Here $\Tilde{j}=(j_1,\ldots, j_{\mathcal{I}})$ and $\Tilde{l}=(l_1,\ldots, l_{\mathcal{I}})$ are vectors of integers where $0\leq j_i,l_i\leq 100\beta_i^{-3/4}$. In addition $\Tilde{p}=(p_{1,1},\ldots ,p_{1,j_1},p_{2,1},\ldots, p_{2,j_2},\ldots, p_{\mathcal{I},j_{\mathcal{I}}})$ and $\Tilde{q}=(q_{1,1},\ldots , q_{\mathcal{I},l_{\mathcal{I}}})$ are vectors of primes where the components satisfy $q^{\beta_{i-1}}<p_{i,1},\ldots, p_{i,j_i},q_{i,1},\ldots ,q_{i,l_i} \leq q^{\beta_{i}}$ for each $1\leq i\leq \mathcal{I}$. Moreover
$$C(\Tilde{p},\Tilde{q})=\prod_{1\leq i\leq \mathcal{I}}\prod_{\substack{1\leq r\leq j_i \\ 1\leq s\leq l_i}}\frac{1}{p_{i,r}^{1/2+1/\beta_{\mathcal{I}}\log q}}\frac{\log(q^{\beta_{\mathcal{I}} }/p_{i,r})}{\log(q^{\beta_{\mathcal{I}}}) }\frac{1}{q_{i,s}^{1/2+1/\beta_{\mathcal{I}}\log q}}\frac{\log(q^{\beta_{\mathcal{I}} }/q_{i,s})}{\log(q^{\beta_{\mathcal{I}}}) }.$$
When $\Tilde{j}, \Tilde{l},\Tilde{p},\Tilde{q}$ are fixed, let us denote
$$P=\prod_{1\leq i\leq \mathcal{I}}\prod_{\substack{1\leq r\leq j_i \\ 1\leq s\leq l_i}}p_{i,r}q_{i,s}.$$

Using the identity $\Re z=(z+\bar z)/2$ and the orthogonality of characters the innermost sum in (\ref{bigsum}) (i.e. the sum $\sum_{\chi}\ldots $)  becomes
\begin{align*}
   & \sum_{\chi \in X_q  } \prod_{1\leq i \leq \mathcal{I}}\prod_{\substack{1\leq r\leq j_i \\ 1\leq s\leq l_i}}\frac{\chi(p_{i,r}) h(p_{i,r} )+\bar{\chi}(p_{i,r}) \bar{h}(p_{i,r} )}{2}\cdot \frac{\chi(q_{i,s}) h(q_{i,s} )+\bar{\chi}(q_{i,s}) \bar{h}(q_{i,s} )}{2} \\
= & \phi(q)\bigg(\prod_{1\leq i\leq \mathcal{I}}\frac{1}{2^{j_i+l_i}}\bigg)   \sum_{\Tilde{\delta}, \Tilde{\epsilon}} \prod_{1\leq i\leq \mathcal{I}} \prod_{\substack{r\leq j_i \\ s\leq l_i}} h^{(\delta_{i,r})} (p_{i,r}) h^{(\epsilon_{i,s})} (q_{i,s})\mathbf{1}\bigg((P,q)=1 \, \text{and} \, \prod_{1\leq i\leq \mathcal{I}} \prod_{\substack{r\leq j_i \\ s\leq l_i}} p_{i,r}^{\delta_{i,r}}q_{i,s}^{\epsilon_{i,s}}\equiv 1 \mod q\bigg). \\
\end{align*}
Here $\Tilde{\delta}=(\delta_{1,1},\ldots, \delta_{1,j_1},\ldots, \delta_{\mathcal{I},j_{\mathcal{I}}})$ and $\Tilde{\epsilon}=(\epsilon_{1,1},\ldots, \epsilon_{1,l_1},\ldots, \epsilon_{\mathcal{I},l_{\mathcal{I}}})$ where each component is $-1$ or $+1$. Moreover $h^{(1)}(p)=h(p)$ and $h^{(-1)}(p)=\bar{h}(p)$. Notice that
$$P=\prod_{1\leq i\leq \mathcal{I}} \prod_{\substack{r\leq j_i \\ s\leq l_i}} p_{i,r}q_{i,s}\leq \prod_{1\leq i\leq \mathcal{I}} q^{200\beta_{i}^{1/4}}\ll q^{400\beta_{\mathcal{I}}^{1/4}}\leq q^{0.1},$$
which means that if $(P,q)=1$ then $\prod_{1\leq i\leq \mathcal{I}} \prod_{\substack{r\leq j_i \\ s\leq l_i}} p_{i,r}^{\delta_{i,r}}q_{i,s}^{\epsilon_{i,s}}\equiv 1 \mod q$ is equivalent to $\prod_{1\leq i\leq \mathcal{I}} \prod_{\substack{r\leq j_i \\ s\leq l_i}} p_{i,r}^{\delta_{i,r}}q_{i,s}^{\epsilon_{i,s}}=1$. Hence we can rewrite the above sum as
$$\phi(q)\bigg(\prod_{1\leq i\leq \mathcal{I}}\frac{1}{2^{(j_i+l_i)}}\bigg)   \sum_{\Tilde{\delta}, \Tilde{\epsilon}} \prod_{1\leq i\leq \mathcal{I}} \prod_{\substack{r\leq j_i \\ s\leq l_i}} h^{(\delta_{i,r})} (p_{i,r}) h^{(\epsilon_{i,s})} (q_{i,s})\mathbf{1}\bigg((P,q)=1 \, \text{and} \, \prod_{1\leq i\leq \mathcal{I}} \prod_{\substack{r\leq j_i \\ s\leq l_i}} p_{i,r}^{\delta_{i,r}}q_{i,s}^{\epsilon_{i,s}}=1 \bigg)$$
Notice that the primes $p_{i,r}, q_{i,s}$ are in different range for different $i$, so
$$\mathbf{1}\bigg( \prod_{1\leq i\leq \mathcal{I}} \prod_{\substack{r\leq j_i \\ s\leq l_i}} p_{i,r}^{\delta_{i,r}}q_{i,s}^{\epsilon_{i,s}}=1 \bigg)=\prod_{1\leq i\leq \mathcal{I}} \mathbf{1}\bigg( \prod_{\substack{r\leq j_i \\ s\leq l_i}} p_{i,r}^{\delta_{i,r}}q_{i,s}^{\epsilon_{i,s}}=1\bigg).$$
Therefore we find that (\ref{bigsum}) is
\begin{equation}
\label{bigequation}
\begin{split}
    \leq & \phi(q)\prod_{1\leq i\leq \mathcal{I}} \sum_{0\leq j,l\leq 100\beta_i^{-3/4 } } \frac{1}{j!}\frac{1}{l!}\frac{1}{2^{j+l}} \sum_{\substack{q^{\beta_{i-1}}<p_1,\ldots,p_j,q_1,\ldots, q_l\leq q^{\beta_i}\\ \delta_1,\ldots, \delta_j,\epsilon_1,\ldots, \epsilon_l\in \{-1,1\} }} \mathbf{1}\big(p_1^{\delta_1}\cdots q_l^{\epsilon_l}=1\big) \frac{|h^{(\delta_1)}(p_1)\cdots h^{(\epsilon_l)}(q_l)|}{(p_1\cdots q_l)^{1/2} }\\
    = & \phi(q)\prod_{1\leq i\leq \mathcal{I}} \sum_{m\leq 200\beta_i^{-3/4} } \bigg(\sum_{\substack{j+l=2m\\0\leq j,l\leq 100\beta_i^{-3/4}  }}\frac{1}{j!}\frac{1}{l!}\bigg)\frac{1}{4^m}\cdot\\
    & \cdot \sum_{q^{\beta_{i-1}}<p_1,\ldots, p_m\leq q^{\beta_i}}\frac{|h(p_1)\cdots h(p_m)|^2}{p_1\cdots p_m}\frac{\# \{(q_1,\ldots, q_{2m},\delta_1,\ldots, \delta_{2m}):q_1\cdots q_{2m}=p_1^2\cdots p_m^2 \text{ and } q_1^{\delta_1} \cdots q_{2m}^{\delta_{2m}}=1 \} }{\# \{ (q_1,\ldots, q_m) :q_1\cdots q_m=p_1\cdots p_m  \} }, \\
\end{split}
\end{equation}
where in the last line we substituted $2m=j+l$ where $m$ is a non-negative integer. This can be done since if $j+l$ is odd then $\mathbf{1}(p_1^{\delta_1}\cdots q_k^{\epsilon_l} =1)=0$.  We also introduced a correction factor which ensures that when we changed variables in the summation we count everything the right number of times.
Now assume that in the prime factorisation of $p_1\cdots p_m$ the exponents are $\alpha_1,\alpha_2,\ldots, \alpha_r$ (so $\alpha_1+\cdots +\alpha_r=m$). Then
\begin{align*}
\# \{(q_1,\ldots, q_{2m},\delta_1,\ldots, \delta_{2m}):q_1\cdots q_{2m}=p_1^2\cdots p_m^2 \text{ and } q_1^{\delta_1} \cdots q_{2m}^{\delta_{2m}}=1 \} &= \frac{(2m)!}{\prod_{i=1}^r(2\alpha_i)!}\prod_{i=1}^r \binom{2\alpha_i}{\alpha_i}, \\
\# \{ (q_1,\ldots, q_m) :q_1\cdots q_m =p_1\cdots p_m  \} &=\frac{m!}{\prod_{i=1}^r \alpha_i! }. \\
\end{align*}
Proceeding with this notation and noting that
\begin{equation*}
    \frac{1}{4^m}\sum_{\substack{j+l=2m\\0\leq j,l\leq 100\beta_i^{-3/4}  }}\frac{1}{j!}\frac{1}{l!}\leq \frac{1}{(2m)!}\cdot \frac{1}{4^m}\sum_{0\leq j\leq 2m}\binom{2m}{j}=\frac{1}{(2m)!},
\end{equation*}
we get that (\ref{bigequation}) is 
\begin{align*}
\leq & \phi(q)\prod_{1\leq i\leq \mathcal{I}} \sum_{m\leq 200\beta_i^{-3/4}} \frac{1}{(2m)!}\sum_{q^{\beta_{i-1}}<p_1,\ldots, p_m\leq q^{\beta_i}} \frac{|h(p_1)\cdots h(p_m)|^2}{p_1\cdots p_m} \cdot \frac{ \frac{(2m)!}{\prod_{i=1}^r(2\alpha_i)!}\prod_{i=1}^r \binom{2\alpha_i}{\alpha_i} }{\frac{m!}{\prod_{i=1}^r \alpha_i! }} \\
\leq  & \phi(q)\prod_{1\leq i\leq \mathcal{I}} \sum_{m\leq 200\beta_i^{-3/4}} \frac{1}{m!}\sum_{q^{\beta_{i-1}}<p_1,\ldots, p_m\leq q^{\beta_i}} \frac{|h(p_1)\cdots h(p_m)|^2}{p_1\cdots p_m}\frac{1}{\prod_{i=1}^r \alpha_i!}\\
\leq &\phi(q)\prod_{1\leq i\leq \mathcal{I}} \sum_{m\leq 200\beta_i^{-3/4} } \frac{1}{m!} \bigg(\sum_{q^{\beta_i-1} <p\leq q^{\beta_i}} \frac{|h(p)^2|}{p}\bigg)^m\leq  \phi(q)\exp\bigg(\sum_{p\leq q^{\beta_{\mathcal{I}}}} \frac{|h(p)|^2}{p}\bigg).\\
\end{align*}
We have
$$|h(p)|^2=\sum_{i=1}^{2k}\frac{a_i^2}{4}+\sum_{1\leq i<j\leq 2k} \frac{a_ia_j}{2}\cos(|t_i-t_j|\log p),$$
so Theorem \ref{t1} follows from Lemma \ref{mertenstype}.

\end{proof}

\begin{proof}[Proof of Lemma \ref{mainsecond}]
Let $0\leq j\leq \mathcal{I}-1$ and recall the defintion of $\mathcal{S}(j)$. For each $j+1\leq l\leq \mathcal{I}$ let
$$\mathcal{S}(j,l)=\{\chi \in X_q^*:|\Re G_{(i,l')}(\chi)|\leq \beta_{i}^{-3/4} \; \forall 1\leq i\leq j, \, \forall i\leq l'\leq \mathcal{I}\, \text{but}\, |\Re G_{(j+1,l)}(\chi)|>\beta_{j+1}^{-3/4}  \},$$
so that
$$\mathcal{S}(j)=\bigcup_{l=j+1}^{\mathcal{I}} \mathcal{S}(j,l).$$
By the definition of $\mathcal{S}(j,l)$, for any $M$ positive integer we have
\begin{equation}
\label{b}
\begin{split}
 &\sum_{\chi \in \mathcal{S}(j,l)}\exp^2\bigg( \Re \sum_{p\leq q^{\beta_j}} \frac{\chi(p)h(p)}{p^{1/2+1/(\beta_j\log q)}}\frac{\log(q^{\beta_j}/p)}{\log(q^{\beta_j}) } \bigg)\\
\leq & \sum_{\chi \in \mathcal{S}(j,l)}\exp^2\bigg( \Re \sum_{p\leq q^{\beta_j}} \frac{\chi(p)h(p)}{p^{1/2+1/(\beta_j\log q)}}\frac{\log(q^{\beta_j}/p)}{\log(q^{\beta_j}) } \bigg)\bigg(\beta_{j+1}^{3/4}\Re G_{(j+1,l)}(\chi)\bigg)^{2M}\\
\ll & \beta_{j+1}^{3M/2}\sum_{\chi \in X_q}\prod_{i=1}^j\bigg(\sum_{0\leq n\leq 100k\beta_i^{-3/4}} \frac{(\Re G_{(i,j)}(\chi))^n}{n!}\bigg)^2(\Re G_{(j+1,l)}(\chi))^{2M}. 
\end{split}
\end{equation}
As in the proof of Lemma \ref{mainfirst}, we expand this to a Dirichlet polynomial and switch the order of summation to get cancellation. We would get essentially the same result if it were not for the $(\Re G_{(j+1,l)}(\chi))^{2M}$ term. In order to proceed the same way we need to make sure the the length of the expanded Dirichlet polynomial is less than $q$. This is certainly the case if $(q^{\beta_{j+1}})^{2M}\leq q^{0.2}$. Moreover $\Re G_{(j+1,l)}(\chi)$ runs through primes bigger than $q^{\beta_j}$, so these primes are distinct from the ones occurring in $\Re G_{(i,l)}(\chi)$ when $1\leq i\leq j$. So the extra contribution coming from $\Re G_{(j+1,l)}(\chi)$ is (in the first sum the numbers $\alpha_1,\ldots ,\alpha_r$ will denote the exponents of the prime factorisation of $p_1\cdots p_M$)
\begin{align*}
    \leq & \frac{1}{2^{2M}}  \sum_{q^{\beta_j} <p_1,\ldots ,p_M\leq q^{\beta_{j+1} } }\frac{|h(p_1)\cdots h(p_M)|^2}{p_1\cdots p_M} \frac{ \frac{(2M)!}{\prod_{i=1}^r(2\alpha_i)!}\prod_{i=1}^r \binom{2\alpha_i}{\alpha_i} }{\frac{M!}{\prod_{i=1}^r \alpha_i! }} \\
    \leq & \frac{(2M)!}{M!\cdot 2^{2M}  }\sum_{q^{\beta_j} <p_1,\ldots ,p_M\leq q^{\beta_{j+1} } }\frac{|h(p_1)\cdots h(p_M)|^2}{p_1\cdots p_M} \frac{1}{\prod_{i=1}^r  \alpha_i! } \\
    \leq &  \frac{(2M)!}{M!\cdot 2^{2M}  } \bigg(\sum_{q^{\beta_j }<p\leq q^{\beta_{j+1} } } \frac{|h(p)|^2}{p}\bigg)^M  \\
\end{align*}
Hence, if $(q^{\beta_{j+1}})^{2M}\leq q^{0.2}$, then (\ref{b}) is 
\begin{equation}
\label{biatch}
    \ll \phi(q) \beta_{j+1}^{3M/2} \exp\bigg(\sum_{p\leq q^{\beta_j}}\frac{|h(p)|^2}{p}\bigg) \frac{(2M)!}{M!\cdot  2^{2M}}\bigg(\sum_{q^{\beta_j}< p\leq q^{\beta_{j+1}}} \frac{|h(p)|^2}{p}\bigg)^M.
    \end{equation}
Let us choose $M:=\lfloor 1/(10\beta_{j+1}) \rfloor$. As in the previous lemma we have
$$\exp\bigg(\sum_{p\leq q^{\beta_j}}\frac{|h(p)|^2}{p}\bigg)\ll (\log q)^{(a_1^2+\ldots+a_{2k}^2)/4}\prod_{1\leq i<j\leq 2k}g(|t_i-t_j|)^{a_ia_j/2}=:B$$
Also
$$\bigg(\sum_{q^{\beta_j}< p\leq q^{\beta_{j+1}}} \frac{|h(p)|^2}{p}\bigg)^M<C^M,$$
where $C=10a^2$, where recall that $a=a_1+\cdots a_{2k}+10$. We have $M\geq C^{10}$, so
\begin{align*}
  \beta_{j+1}^{3M/2}\frac{(2M)!}{M!\cdot  2^{2M}}C^M 
 \leq & \exp\Big(\frac{3M}{2}\log \beta_{j+1}+M\log M +M\log C\Big) \\
 \leq & \exp\Big(-\frac{2}{5}M\log M \Big) \\
 \leq &\exp\Big(-\frac{1}{100\beta_{j+1} } \log(\beta_{j+1}^{-1}) \Big). \\
\end{align*}
Hence, for each $1\leq j\leq \mathcal{I}-1$ we have
\begin{align*}
\sum_{\chi \in \mathcal{S}(j)}\exp^2\bigg( \Re \sum_{p\leq q^{\beta_j}} \frac{\chi(p)h(p)}{p^{1/2+1/(\beta_j\log q)}}\frac{\log(q^{\beta_j}/p))}{\log(q^{\beta_j}) }\bigg) 
\leq &  \sum_{j+1\leq l\leq \mathcal{I} }\sum_{\chi \in \mathcal{S}(j,l)} \exp^2\bigg( \Re \sum_{p\leq q^{\beta_j}} \frac{\chi(p)h(p)}{p^{1/2+1/(t_j\log q)}}\frac{\log(q^{\beta_j}/p))}{\log(q^{\beta_j}) }\bigg) \\
\ll & (\mathcal{I}-j)B \exp\Big(-\frac{1}{100\beta_{j+1} } \log(\beta_{j+1}^{-1}) \Big) \\
\ll & \log (1/\beta_j) B   \exp\Big(-\frac{1}{100\beta_{j+1} } \log(\beta_{j+1}^{-1}) \Big)\\
\ll & B \exp\Big(-\frac{1}{200\beta_{j+1} } \log(\beta_{j+1}^{-1}) \Big), \\
\end{align*}
which proves the lemma when $j\geq 1$. When $j=0$, the Dirichlet polynomial $\sum_{p\leq q^{\beta_j}}\ldots$ is empty, so what we infer is that $|\mathcal{S}(0)|\leq B\exp\Big(-\frac{1}{200\beta_{1} } \log(\beta_{1}^{-1}) \Big)\leq q e^{-(\log \log q)^2 }$, so the lemma is proved.
\end{proof}
\begin{proof}[Proof of Lemma \ref{important}]
We show how to modify the proof of Lemma \ref{mainfirst} to take into account the contribution coming from the squares of primes, the modification of Lemma \ref{mainsecond} is similar. The quantity $C$ will always denote a constant, which might depend on the fixed parameters of Theorem \ref{t1}. Moreover the value of $C$ might not be the same at each occurrence. There can be at most one primitive and quadratic $\chi$ mod $q$. The contribution from one such character is negligible, since we have the bound $|L(1/2+it_j,\chi)|\ll \exp\big(C\frac{\log q}{\log \log q}\big)$  when  $t_j=q^{O(1)}$ (see Corollary 2.4 in \cite{munschshifted}). Let $2\leq x\leq q$. If $\chi$ is not a quadratic character then $\chi^2$ is non-trivial, so under GRH a standard explicit formula argument gives us the bound 
$$\sum_{n\leq x} \chi^2(n)\Lambda(n)n^{-it}\ll x^{1/2} \big(\log qx(|t|+2) \big)^2.$$
Recall that by definition $h(p^2)=\frac{1}{2}(a_1p^{-2it_1}+\cdots +a_{2k}p^{-2it}\big)$. Therefore, by partial summation we obtain the bound
$$\sum_{(\log q)^{10A}< p\leq q^{\beta_{ \mathcal{I} } } }\frac{\chi(p^2) h(p^2)}{ p}\ll 1.$$
Moreover by Mertens' second estimate
$$\sum_{\log q< p\leq  (\log q)^{10A} }\frac{\chi(p^2) h(p^2)}{ p}\ll 1,$$
so in fact me may truncate the Dirichlet polynomial coming from the squares of primes at $\log q$ instead of $q^{\beta_{\mathcal{I}}}$. 
For each $1\leq m\leq (\log \log q)/(\log 2)$ let
$$P_m(\chi)=\frac{1}{2} \sum_{2^m\leq p< 2^{m+1} } \frac{\chi(p^2) h(p^2)}{p},$$
and let
$$\mathcal{P}(m)=\{ \chi\in X_q^*: |\Re P_m(\chi)|\geq 2^{-m/10}:\, \text{but} \; |\Re P_n(t)|\leq 2^{-n/10} \text{ } \forall m< n\leq \log\log q/\log 2 \}.$$
If $\chi$ is not inside $\cup_{1\leq m\leq \log\log q/2} \mathcal{P}(m) $, then $\sum_{p\leq \log q} \frac{\chi(p^2)h(p^2)}{p}\ll 1$, which contributes negligibly to the final expression, so from now on we assume $\chi\in \cup_{1\leq m\leq \log\log q/2} \mathcal{P}(m) $,
Let $M=M(m):=\lfloor 2^{3m/4}  \rfloor$. Then, as in the proof of Lemma \ref{mainsecond} where we considered the contribution from $\Re G_{j+1,l}(\chi)$, we get 
$$\# \mathcal{P}(m)\leq \sum_{\chi \in X_q}\big|2^{m/10}\Re P_m(t)|^{2M}\ll 2^{mM/5}\phi(q) \frac{(2M)!}{M!2^{2M}}\bigg( \sum_{2^m<p\leq 2^{m+1}} \frac{|h(p^2)|^2}{p^2}\bigg)^M\leq \phi(q) \bigg( \frac{CM}{2^{4m/5} }\bigg)^M\ll q e^{-2^{3m/4}} , $$
hence the contribution coming from $\chi \in \mathcal{P}(m)$, where $m\geq \frac{3}{2\log 2} \log\log\log q$ is negligible, so from now on we assume $m\leq \frac{3}{2\log 2} \log\log\log q$.
By the trivial bound we have
$$ \bigg| \Re \sum_{p\leq 2^{m+1}} \frac{\chi(p)h(p)}{p^{1/2+1/(\beta_{\mathcal{I}}\log q)}}\frac{\log(q^{\beta_{\mathcal{I} } }/p)) }{\log(q^{\beta_{\mathcal{I} } })}+\frac{1}{2}\Re\sum_{p\leq \log q}\frac{\chi(p^2)h(p^2)}{p}\bigg|\leq \sum_{p\leq 2^{m+1}} \frac{1}{p^{1/2} } +\frac{1}{2}\sum_{p\leq 2^{m+1}}\frac{1}{p}+O(1) \leq C 2^{m/2}.$$

Note that if $1\leq m\leq \frac{3}{2\log 2} \log\log\log q$ , then $2^{m/2}=o(\beta_0^{-3/4})$ (recall that $\beta_0=\frac{1}{(\log \log q)^2}$), thus for any $\chi\in \mathcal{T}$ we have
\begin{equation*}
    \bigg|\Re \sum_{2^{m+1}< p\leq q^{\beta_{0 }}} \frac{\chi(p)h(p)}{p^{1/2+1/(\beta_{\mathcal{I}}\log q)}}\frac{\log(q^{\beta_{\mathcal{I}} }/p))}{\log(q^{\beta_{\mathcal{I}}}) }\bigg|\leq (1+o(1))\beta_0^{-3/4},
\end{equation*}
so we can run the exact same argument as in Lemma \ref{mainfirst} on this slightly truncated polynomial. Therefore we get
\begin{align*}
   & \sum_{\chi\in  \mathcal{P}(m) \cap \mathcal{T} }\exp^2\bigg( \Re \sum_{p\leq q^{\beta_{\mathcal{I} } }} \frac{\chi(p)h(p)}{p^{1/2+1/(\beta_{\mathcal{I}}\log q)}}\frac{\log(q^{\beta_{\mathcal{I}}}/p)}{\log(q^{\beta_{\mathcal{I}}}) }+\frac{1}{2}\Re \sum_{p\leq \log q} \frac{\chi(p^2)h(p^2) }{p}\bigg) \\
\leq & e^{C2^{m/2}} \sum_{\chi\in  \mathcal{P}(m) \cap \mathcal{T} } \exp^2\bigg( \Re \sum_{2^{m+1}< p\leq q^{\beta_{\mathcal{I} }}} \frac{\chi(p)h(p)}{p^{1/2+1/(\beta_{\mathcal{I}}\log q)}}\frac{\log(q^{\beta_{\mathcal{I}} }/p)}{\log(q^{\beta_{\mathcal{I}}}) }\bigg)\\
\ll & e^{C2^{m/2} } \sum_{\chi\in  \mathcal{P}(m) \cap \mathcal{T} }(2^{m/10}\Re P_m(\chi))^{2M} \exp^2\bigg( \Re \sum_{2^{m+1}< p\leq q^{\beta_{\mathcal{I} }}} \frac{\chi(p)h(p)}{p^{1/2+1/(\beta_{\mathcal{I}}\log q)}}\frac{\log(q^{\beta_{\mathcal{I}} }/p)}{\log(q^{\beta_{\mathcal{I}}}) }\bigg) \\
\ll & e^{C2^{m/2}} 2^{mM/5} \phi(q)\cdot  \frac{(2M)!}{2^{2M} M!}  \bigg(\sum_{2^m<p\leq 2^{m+1}} \frac{|h(p^2)|^2}{p^2}\bigg)^M \exp\bigg(\sum_{2^{m+1}<p\leq q^{\beta_{\mathcal{I}}} }\frac{|h(p^2)|^2}{p} \bigg) \\
 \ll  & e^{C2^{m/2}-2^{3m/4}} B. \\
\end{align*}
Summing this over $1\leq m\leq \frac{3}{2\log 2} \log\log\log q$ we get the desired bound.
\end{proof}

\section{Proof of Theorem \ref{t2}}
We prove the theorem for the even case, the odd case is essentially identical. Recall, we have
$$S_{2k}^{+}(q):=\sum_{\chi \in X_q^+}|\theta(1,\chi)|^{2k}.$$
We may write $\theta(1,\chi)$ as the inverse Mellin transform of $L(s,\chi)$ twisted by some additional factor. For an even character $\chi$ and $c>1/2$ one has
$$\theta(1,\chi)=\frac{1}{2\pi i} \int_{(c)} L(2s, \chi) \bigg(\frac{q}{\pi}\bigg)^{s} \Gamma(s)ds.$$
We let $f(t, \chi)=L(1/2+it, \chi)\Gamma(1/4+it/2)(q/\pi)^{it/2}$. Shifting the line of integration to $c=1/4$ and noting that we sum positive quantities so we can sum over $X_q^*$ instead of $X_q^+$, we get
\begin{align*}
     S_{2k}^{+}(q) & \ll   \sum_{\chi \in X_q^+} \bigg| \int_{-\infty}^{\infty} f(t,\chi)\bigg( \frac{q}{\pi}\bigg)^{1/4} dt \bigg|^{2k} \\ 
    & \ll \sum_{\chi\in X_q^*} \bigg| \int_{-\infty}^{\infty} f(t,\chi)\bigg( \frac{q}{\pi}\bigg)^{1/4} \mathbf{1}(|t|\leq q) dt \bigg|^{2k}
    +\sum_{\chi\in X_q^*} \bigg| \int_{-\infty}^{\infty} f(t,\chi)\bigg( \frac{q}{\pi}\bigg)^{1/4} \mathbf{1}(|t|> q) dt \bigg|^{2k}=:I_1+I_2.\\
\end{align*}
$I_2$ can be easily bounded via the exponential decay of $\Gamma(1/4+it/2)$. We have $|\Gamma(1/4+it/2)|\ll e^{-t/10}$ for all $t\in \mathbb{R}$ and also $|L(1/2+it,\chi)|\ll q(|t|+2)$ for any $\chi\in X_q^*$ (see Theorem C.1. and Lemma 10.15 of \cite{montgomery2007multiplicative}), hence the crude bound
$$I_2\ll q\cdot \bigg(\int_q^{\infty} (qt)^2 e^{-t/10} dt \bigg)^{2k}\ll 1.$$
In the rest of the proof we bound $I_1$. In fact, we start by reducing the problem as follows. 
\begin{lemma}
\label{seged}
    Assume that for each $0\leq n\leq q$ integer we have the moment inequality
    \begin{equation*}
        \sum_{\chi \in X_q^*} \bigg(\int_{n}^{n+1} |L(1/2+it, \chi)| dt\bigg)^{2k}\ll \phi(q)(\log q)^{(k-1)^2}.
    \end{equation*}
Then Theorem \ref{t2} holds.
\end{lemma}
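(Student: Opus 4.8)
The plan is to recognise this lemma as a packaging step: we bound $I_1$ by a rapidly weighted $\ell^{2k}$-sum of the local integrals $\int_n^{n+1}|L(1/2+it,\chi)|\,dt$ and then apply the hypothesis interval by interval. First I would discard the oscillating factor $(q/\pi)^{it/2}$ and the harmless argument shift by the triangle inequality. Since $|f(t,\chi)|=|L(1/2+it,\chi)|\,|\Gamma(1/4+it/2)|$, pulling out the constant $(q/\pi)^{1/4}$ to the $2k$-th power gives
\[
I_1\ll q^{k/2}\sum_{\chi\in X_q^*}\left(\int_{-q}^{q}|L(1/2+it,\chi)|\,|\Gamma(1/4+it/2)|\,dt\right)^{2k}.
\]
Using the exponential decay $|\Gamma(1/4+it/2)|\ll e^{-|t|/10}$ already invoked for $I_2$, and cutting $[-q,q]$ into the unit intervals $[n,n+1]$ with $-q\le n\le q-1$, on each of which the Gamma weight is $\ll e^{-|n|/10}$, this becomes
\[
I_1\ll q^{k/2}\sum_{\chi\in X_q^*}\left(\sum_{-q\le n\le q-1}e^{-|n|/10}a_n(\chi)\right)^{2k},\qquad a_n(\chi):=\int_n^{n+1}|L(1/2+it,\chi)|\,dt.
\]

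Next, since $C_0:=\sum_{n\in\mathbb{Z}}e^{-|n|/10}<\infty$, Jensen's inequality for the convex function $x\mapsto x^{2k}$ against the probability weights $e^{-|n|/10}/C_0$ yields $\left(\sum_n e^{-|n|/10}a_n(\chi)\right)^{2k}\le C_0^{2k-1}\sum_n e^{-|n|/10}a_n(\chi)^{2k}$. Summing over $\chi$ and interchanging, $I_1\ll q^{k/2}\sum_{n}e^{-|n|/10}\sum_{\chi\in X_q^*}a_n(\chi)^{2k}$. For $0\le n\le q-1$ the hypothesis bounds the inner sum by $\phi(q)(\log q)^{(k-1)^2}$. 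For $-q\le n\le -1$ I would use $\overline{L(1/2+it,\chi)}=L(1/2-it,\bar\chi)$, so $a_n(\chi)=a_{-n-1}(\bar\chi)$ after $t\mapsto -t$; since $\chi\mapsto\bar\chi$ permutes $X_q^*$ and $0\le -n-1\le q-1$, the hypothesis applies here too. Hence $I_1\ll q^{k/2}\phi(q)(\log q)^{(k-1)^2}\sum_n e^{-|n|/10}\ll q^{k/2}\phi(q)(\log q)^{(k-1)^2}$, and together with $I_2\ll 1$ this gives Theorem \ref{t2} in the even case; the odd case is identical after writing $\theta(1,\chi)=\frac{1}{2\pi i}\int_{(c)}\Gamma(s)(q/\pi)^s L(2s-1,\chi)\,ds$ and shifting the contour to $c=3/4$, which replaces $q^{k/2}$ by $q^{3k/2}$.

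There is no genuine obstacle here; the lemma is purely a reduction. The only points needing care are checking that the constant in Stirling's bound for $\Gamma(1/4+it/2)$ is good enough for the geometric series in the $e^{-|n|/10}$ weights to converge, verifying that the reflection $t\mapsto -t$ maps each negative-index interval into the range $0\le n\le q$ covered by the hypothesis (so that restricting the hypothesis to nonnegative $n$ loses nothing), and recalling, as already noted before the lemma, that passing from $X_q^+$ to $X_q^*$ is legitimate because every quantity in sight is nonnegative.
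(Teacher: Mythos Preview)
Your proposal is correct and follows essentially the same approach as the paper: split the integral over $[-q,q]$ into unit intervals, use H\"older/Jensen to pull the $2k$-th power inside, exploit the exponential decay of $\Gamma$ to make the resulting sum over $n$ converge, and invoke the symmetry $t\mapsto -t$, $\chi\mapsto\bar\chi$ to reduce to $n\ge 0$. The only cosmetic difference is that the paper applies H\"older with auxiliary weights $1/(n+1)$ (picking up a harmless factor $(n+1)^{2k}$ that is then absorbed by $e^{-n/10}$), whereas you feed the Gamma decay directly into Jensen's inequality; the paper itself remarks that the precise choice of weights is unimportant.
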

\begin{proof}
    Note that $I_1$ involves integrals ranging from $-q$ to $q$.
We would like to break up these integrals into smaller pieces of length one, which we can do using Hölder's inequality. We note that $|f(-t,\bar\chi)|=|f(t,\chi)|$, and $\bar\chi$ is even if $\chi$ is, so putting absolute values inside the integral we get

\begin{align*}
    I_1 & \ll q^{k/2} \sum_{\chi \in X_q^*} \bigg( \sum_{0\leq n\leq q}\frac{1}{n+1}\cdot  (n+1) \int_{n}^{n+1} |f(t,\chi)|dt\bigg)^{2k} \\
    & \ll q^{k/2} \sum_{\chi \in X_q^*} \bigg(\sum_{0\leq n\leq q}(n+1)^{2k}\bigg(\int_{n}^{n+1} |f(t,\chi)|dt\bigg)^{2k} dt \bigg)\cdot \bigg(\sum_{0\leq n\leq q} (n+1)^{-2k/(2k-1)}\bigg)^{2k-1} \\
    &  \ll q^{k/2} \sum_{0\leq n\leq q}(n+1)^{2k}  e^{-n/10} \sum_{\chi \in X_q^*} \bigg(\int_{n}^{n+1} |L(1/2+it, \chi)| dt\bigg)^{2k}  , \\
\end{align*}
where in the last line we used $|\Gamma(1/4+it/2)|\ll e^{-t/10}$. Here we introduced the auxiliary weights $\frac{1}{n+1}$, so we do not lose more than a constant when applying Hölder (see the rightmost sum in the second line). The price we pay is that the term $(n+1)^{2k}$ appears in the final expression, however this is offset by the exponential decay of the $\Gamma$ function, which is responsible for the term $e^{-n/10}$. We note that the exact choice of these weights is not important.

If the assumption of the lemma holds, noting that $\sum_{0\leq n\leq q}(n+1)^{2k}  e^{-n/10} \ll 1$, we obtain
 $$I_1\ll q^{k/2}\phi(q)(\log q)^{(k-1)^2},$$
 which implies Theorem \ref{t2}.
\end{proof}
Let $0\leq n\leq q$ be an integer. In the rest of the proof we show that indeed
\begin{equation}
\label{momentone}
    \sum_{\chi \in X_q^*} \bigg(\int_{n}^{n+1} |L(1/2+it, \chi)| dt\bigg)^{2k}\ll \phi(q)(\log q)^{(k-1)^2}.
\end{equation}
 We first pull out 3 copies of the integral as outlined previously, which we can do since $2k>4$ by assumption. Using the notation $d\mathbf{t}=dt_1dt_2dt_3$, we obtain
\begin{align*}
    \bigg(\int_{n}^{n+1} |L(1/2+it, \chi)| dt\bigg)^{2k} &\leq \int_{[n,n+1]^3}\prod_{a=1}^3|L(1/2+it_a, \chi)| \bigg(\int_{[n,n+1]}|L(1/2+iu, \chi)|du \bigg)^{2k-3} d\mathbf{t}\\
     & \ll \int_{[n,n+1]^3}\prod_{a=1}^3|L(1/2+it_a, \chi)| \bigg(\int_{\mathcal{D} }|L(1/2+iu, \chi)|du \bigg)^{2k-3} d\mathbf{t}, \\
\end{align*}
where $\mathcal{D}=\mathcal{D}(t_1,t_2,t_3)=\{ u\in [n,n+1]:|t_1-u|\leq |t_2-u|\leq |t_3-u| \}$. Here, we may restrict the integration over $\mathcal{D}$ by symmetry. In fact, later on in the argument it will be important that we have a fixed ordering of the distances $|t_a-u|$.

We would like to apply Hölder to bring the $(2k-3)$-th power inside the integrand. However, doing that directly would be too wasteful, so we first need to partition the region $\mathcal{D}$ according to the distance $|t_1-u|$. This ensures that we extract the main contribution where variables are close together.

We partition $[-1,1]$ into dyadic regions as follows. Let $\mathcal{B}_1=\big[-\frac{1}{\log q},\frac{1}{\log q}\big]$. For $2\leq j< \lfloor\log\log q\rfloor+1=:K$ let $\mathcal{B}_j=\big[-\frac{e^{j-1}}{\log q}, -\frac{e^{j-2}}{\log q}\big]\cup \big[\frac{e^{j-2}}{\log q}, \frac{e^{j-1}}{\log q}\big] $. Finally, we define $\mathcal{B}_K=[-1,1]\setminus \bigcup_{1\leq j<K} \mathcal{B}_j $. 

For any $t_1\in [n,n+1]$ we have 
$$\mathcal{D}\subset [n,n+1] \subset t_1+[-1,1]\subset \bigcup_{1\leq j\leq K} t_1+\mathcal{B}_j,$$ 
so if we let $\mathcal{A}_j=\mathcal{B}_j\cap (-t_1+\mathcal{D})$ then $(t_1+\mathcal{A}_j)_{1\leq j\leq K}$ form a partition of $\mathcal{D}$. Applying Hölder twice we get
\begin{align*}
     \bigg(\int_{\mathcal{D}}|L(1/2+iu, \chi)|du\bigg)^{2k-3}& \leq \bigg( \sum_{1\leq j\leq K} \frac{1}{j}\cdot  j \int_{t_1+\mathcal{A}_j} |L(1/2+iu, \chi)|du  \bigg)^{2k-3} \\
    & \leq \bigg(\sum_{1\leq j\leq K} j^{2k-3} \bigg( \int_{t_1+\mathcal{A}_j} |L(1/2+iu,\chi)|du  \bigg)^{2k-3}\bigg) \bigg(\sum_{1\leq j\leq K} j^{-\cdot (2k-3)/(2k-4)} \bigg)^{2k-4} \\
    & \ll \sum_{1\leq j\leq K} j^{2k-3} \bigg( \int_{t_1+\mathcal{A}_j} |L(1/2+iu, \chi)| \bigg)^{2k-3} \\
    & \leq \sum_{1\leq j\leq K} j^{2k-3} |\mathcal{B}_j|^{2k-4} \int_{t_1+\mathcal{A}_j} |L(1/2+iu, \chi)|^{2k-3}du, \\
\end{align*}
where in the last line we used that $|\mathcal{A}_j|\leq |\mathcal{B}_j| $. Here the introduction of the weights $\frac{1}{j}$ has similar purpose to the weights $\frac{1}{n+1}$ in the proof of Lemma \ref{seged}.
For simplicity, for $\mathbf{t}=(t_1,t_2,t_3)$ let us denote
$$L(\mathbf{t},u)= \sum_{\chi\in X_q^*}|L(1/2+iu, \chi)|^{2k-3}\prod_{a=1}^3|L(1/2+it_a, \chi)|  .$$
Hence we have shown that
\begin{equation}
\label{region}
\sum_{\chi \in X_q^*} \bigg(\int_{n}^{n+1} |L(1/2+it, \chi)| dt\bigg)^{2k}\ll \sum_{1\leq j\leq K}j^{2k-3}|\mathcal{B}_j|^{2k-4}\int_{[n,n+1]^3} \int_{t_1+\mathcal{A}_j} L(\mathbf{t},u)dud\mathbf{t}.
\end{equation}
We are now going to partition the integral into smaller regions, where each region is such that $L(\mathbf{t},u)$ does not change by more than a fixed constant, Therefore, when applying Theorem \ref{t1} we do not lose more than a constant.

Note that we have already restricted our integral to a region where $|t_1-u|$ is fixed up to a constant and $|t_1-u|\leq|t_2-u|\leq |t_3-u|$. We partition this region into smaller regions where $|t_2-u|-|t_1-u|$ and $|t_3-u|-|t_2-u|$ are also fixed (up to a constant). More precisely, for each $1\leq j,l,m\leq K$, let $$\mathcal{C}_{j,l,m}=\{(t_1,t_2,t_3,u)\in [n,n+1]^4: u\in t_1+ \mathcal{A}_j,\, |t_2-u|-|t_1-u|\in \mathcal{B}_l,\, |t_3-u|-|t_2-u|\in \mathcal{B}_m \}.$$
 Recall that by the definition of $\mathcal{A}_j$, $u\in t_1+ \mathcal{A}_j$ implies $|t_1-u|\leq |t_2-u|\leq |t_3-u|$. Since $|t_2-u|-|t_1-u|$ and $|t_3-u|-|t_2-u|$ are inside $[0,1]$, for a fixed $1\leq j\leq K$ we have $[n,n+1]^3\times (t_1+\mathcal{A}_j)\subset \bigcup_{1\leq l,m\leq K} \mathcal{C}_{j,l,m}$, so this is a partition indeed.
 
Let us consider the volume of $\mathcal{C}_{j,l,m}$. If we fix $u$ then $t_1$ is in a fixed region of size $\ll \frac{e^j}{\log q}$. If we fix $u$ and $t_1$ then $t_2$ is in a fixed region of size $\ll \frac{e^l}{\log q} $ (since $\pm(t_2-u)\in |t_1-u|+\mathcal{B}_l$), if we fix $u$ and $t_2$ then $t_3$ is in a fixed region of size $\ll \frac{e^m}{\log q} $. Hence the volume of $\mathcal{C}_{j,l,m}$ is $\ll \frac{e^{j+l+m}}{(\log q )^3} $.

We know give an upper bound on $L(\mathbf{t},u)$ knowing that $(t_1,t_2,t_3,u)\in \mathcal{C}_{j,l,m}$ using Theorem 1. By the definition of $\mathcal{C}_{j,l,m}$ we have
$\frac{e^j}{\log q}\ll |t_1-u|\ll 1$ so $g(|t_1-u|)\ll \frac{\log q}{e^j}$. By the definition of $\mathcal{A}_j$ we have $|t_2-u|\geq |t_1-u|$, so $|t_2-u|= |t_1-u|+(|t_2-u|-|t_1-u|)\gg \frac{e^j}{\log q}+\frac{e^l}{\log q}$ hence $g(|t_2-u|)\ll \frac{\log q}{e^{\max(j,l) }}$. By similar considerations we obtain  $g(|t_3-u|)\ll \frac{\log q}{e^{\max(j,l,m) }}$, $g(|t_2-t_1|)\ll \frac{\log q}{e^l}$, $g(|t_3-t_2|)\ll \frac{\log q}{e^m}$ and finally $g(|t_3-t_1|) \ll \frac{\log q}{e^{\max(l,m ) } } $.
So by Theorem 1, if $(t_1,t_2,t_3,u)\in \mathcal{C}_{j,l,m}$, then
\begin{align*}
     L(\mathbf{t},u) 
     \ll & \phi(q)(\log q)^{k^2-3k+3} \bigg(\frac{\log q}{e^j}\cdot \frac{\log q}{e^{\max \{j,l \}} }\cdot  \frac{\log q}{e^{ \{j,l,m \} }} \bigg)^{(2k-3)/2}\bigg(\frac{\log q}{e^l}\cdot \frac{\log q}{e^{m}}\cdot \frac{\log q}{e^{\max \{l,m \}}} \bigg)^{1/2} \\
     = & \phi(q)(\log q)^{k^2} \exp\Big( -\frac{2k-3}{2}(j+\max \{j,l \}+\max \{j,l,m \})-\frac{1}{2}(l+m+\max \{l,m \} ) \Big). \\
\end{align*}
Now that we have obtained an upper bound on the volume of $\mathcal{C}_{j,l,m}$ and an upper bound on $L(\mathbf{t},u)$ when $(t_1,t_2,t_3,u)\in \mathcal{C}_{j,l,m}$, recalling that $|\mathcal{B}_j|\leq \frac{e^j}{\log q}$, we may continue \eqref{region} as follows. 
\begin{align*}
        & \sum_{1\leq j\leq K} j^{2k-3} |\mathcal{B}_j|^{2k-4} \int_{[n,n+1]^3}\int_{t_1+\mathcal{A}_j}  L(\mathbf{t},u)dud\mathbf{t} \\
        \ll & \sum_{1\leq j\leq K} j^{2k-3} |\mathcal{B}_j|^{2k-4} \sum_{1\leq l,m\leq K} \text{Volume}(\mathcal{C}_{j,l,m}) \cdot \sup_{(\mathbf{t},u)\in \mathcal{C}_{j,l,m}}  L(\mathbf{t},u) \\
    \ll & \phi(q)(\log q)^{(k-1)^2}\sum_{1\leq j, l, m\leq K} j^{2k-3} \exp\Big( j(k-3/2)+l/2+m/2-(k-3/2)( \max \{j,l \}+\max \{j,l,m \})-\max\{l,m \}/2\Big) \\
    \ll &  \phi(q)(\log q)^{(k-1)^2} \sum_{1\leq j,l,m\leq K} j^{2k-3} e^{-(k-2)\max \{j,l,m \}} \\
    \ll &  \phi(q)(\log q)^{(k-1)^2} \sum_{1\leq j,l,m\leq K} j^{2k-3} e^{-(k-2)(j+l+m)/3} \\
    \ll &  \phi(q)(\log q)^{(k-1)^2},  \\
\end{align*}
which proves Theorem 2. Note that in the last line we used $k>2$.
\section{Proof of theorem 3}
In this section we prove Theorem 3. We break this section up into three subsections. In the first one we state a variant of Theorem \ref{t1}, which concerns shifted moments of $L$-functions off the critical line. The statement can be proved very similarly to Theorem \ref{t1}, we will indicate the changes one has to make. In the second and third subsections we will prove the first and second part of Theorem \ref{t3} respectively.
\subsection{A variant of Theorem \ref{t1}}
\begin{theorem}
\label{t4}
Let $2k\geq 1$ be a fixed integer and $a_1,\ldots, a_{2k}, A$ fixed positive real numbers, $2\leq y\leq q$. Suppose the Dirichlet L-functions modulo $q$ satisfy the Generalised Riemann Hypothesis. Let $X_q^*$ denote the set of primitive characters modulo $q$. Let $t=(t_1,\ldots ,t_{2k})$ be a real $2k$-tuple with $|t_j|\leq  q^A$. Then
$$\sum_{\chi \in X_q^*} \big| L\big(1/2+1/\log y+ it_1,\chi \big) \big|^{a_1} \cdots \big| L\big(1/2+1/\log y+ it_{2k},\chi \big) \big|^{a_{2k}}\ll \phi(q)(\log y)^{(a_1^2+\cdots +a_{2k}^2)/4} \prod_{1\leq i<j\leq 2k} g^*(|t_i-t_j|)^{a_ia_j/2},  $$
where $g^*:\mathbb{R}_{\geq 0} \rightarrow \mathbb{R}$ is the function defined by
$$g^*(x) =\begin{cases}
\log y  & \text{if } x\leq \frac{1}{\log y} \text{ or } x\geq e^y, \\
\frac{1}{x} & \text{if }   \frac{1}{\log y}\leq x\leq 10, \\
\log \log x & \text{if } 10\leq x\leq e^y
\end{cases}.
$$
\end{theorem}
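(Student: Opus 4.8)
The plan is to repeat the proof of Theorem \ref{t1} essentially verbatim, the only structural change being that Lemma \ref{Lupper} is everywhere replaced by its off-line counterpart Lemma \ref{up}; concretely, each occurrence of the damping factor $p^{-1/\log x}$ becomes $p^{-\max(1/\log y,\,1/\log x)}$. One still takes the cutoff parameter to be a fixed power of $q$, so that the term $\frac{\log q}{\log x}$ stays bounded exactly as before. The passage from $\log q$ to $\log y$ will come \emph{solely} from the stronger damping $p^{-1/\log y}$, and the point to watch is that this factor must be carried all the way through to the final variance computation rather than discarded; heuristically it makes the effective length of the approximating Dirichlet polynomial of size $\asymp y$ rather than a power of $q$.

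Writing $\sigma_x:=\max(1/\log y,\,1/\log x)$, one applies Lemma \ref{up} with $t=t_j$ for each $j$, multiplies by $a_j$, sums over $j$, and separates the prime and prime-square terms exactly as in the derivation of \eqref{mainupper}, to obtain, for $2\le x\le q$, $|t_j|\le q^A$ and $\chi\in X_q^*$,
\[
\sum_{j=1}^{2k} a_j\log\big|L(1/2+1/\log y+it_j,\chi)\big| \le 2\,\Re\sum_{p\le x}\frac{h(p)\chi(p)}{p^{1/2+\sigma_x}}\frac{\log x/p}{\log x}+\Re\sum_{p\le x^{1/2}}\frac{h(p^2)\chi(p^2)}{p^{1+2\sigma_x}}+(A+1)a\frac{\log q}{\log x}+O(1),
\]
the only difference from \eqref{mainupper} being the exponent $1/2+\sigma_x$ in place of $1/2+1/\log x$. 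One then keeps the parameters $\beta_0=0$, $\beta_i=20^{i-1}/(\log\log q)^2$ and the integer $\mathcal{I}$ exactly as in the proof of Theorem \ref{t1}, takes $x=q^{\beta_{\mathcal{I}}}$ (so that $\log q/\log x=O(1)$), and sets
\[
G^*_{(i,j)}(\chi)=\sum_{q^{\beta_{i-1}}<p\le q^{\beta_i}}\frac{\chi(p)h(p)}{p^{1/2+\max(1/\log y,\,1/(\beta_j\log q))}}\frac{\log(q^{\beta_j}/p)}{\log(q^{\beta_j})},
\]
together with sets $\mathcal{T}^*$ and $\mathcal{S}^*(j)$ defined as $\mathcal{T}$ and $\mathcal{S}(j)$ but with $G^*$ in place of $G$.

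Next one reruns Lemmas \ref{mainfirst}, \ref{mainsecond} and \ref{important}. Their combinatorial core --- truncating $\exp(\Re F_i(\chi))$ via Stirling, expanding $\exp^2(\cdots)$ into a Dirichlet polynomial of length at most $q^{0.1}$, enlarging the sum to all of $X_q$, applying character orthogonality and the multinomial/correction-factor bookkeeping of Lemma \ref{mainfirst} --- goes through unchanged, since the extra damping only shrinks each coefficient; in particular the length bounds $P\le q^{0.1}$ and $(q^{\beta_{j+1}})^{2M}\le q^{0.2}$ are unaffected, and the prime-square terms are treated exactly as in Lemma \ref{important}. The sole genuinely new step is the final variance estimate: where Lemma \ref{mainfirst} bounds $p^{-1-2/(\beta_{\mathcal{I}}\log q)}$ by $p^{-1}$, one must instead retain the damping and estimate $\sum_{p\le q^{\beta_{\mathcal{I}}}}|h(p)|^2 p^{-1-2\sigma_x}$, where $|h(p)|^2=\sum_{i=1}^{2k}\frac{a_i^2}{4}+\sum_{1\le i<j\le 2k}\frac{a_ia_j}{2}\cos(|t_i-t_j|\log p)$. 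This reduces to the shifted Mertens estimate
\[
\sum_{p\le x}\frac{\cos(\alpha\log p)}{p^{1+2\sigma_x}}\le \log g^*(\alpha)+O(1),
\]
which is proved just as Lemma \ref{mertenstype}: by partial summation the left-hand side equals $\log\big|\zeta(1+2\sigma_x+i\alpha)\big|+O(1)$, and one bounds this trivially by $\log(1/\sigma_x)+O(1)$ when $\alpha\le 1/\log y$ or $\alpha\ge e^y$, by the simple pole of $\zeta$ at $1$ when $1/\log y\le\alpha\le 10$, and, under RH via Corollary 13.16 in \cite{montgomery2007multiplicative}, by $\log\log\log\alpha+O(1)$ when $10\le\alpha\le e^y$. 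Since $2\sigma_x\asymp 1/\log y$ when $y\le q^{\beta_{\mathcal{I}}}$, while $\log y\asymp\log q$ when $q^{\beta_{\mathcal{I}}}<y\le q$, in every case $\log(1/\sigma_x)=\log\log y+O(1)$; this is exactly what turns $(\log q)^{(a_1^2+\cdots+a_{2k}^2)/4}\prod g$ into $(\log y)^{(a_1^2+\cdots+a_{2k}^2)/4}\prod g^*$.

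Feeding this estimate back through the three lemmas, and handling $\mathcal{S}^*(0)$ by Cauchy--Schwarz together with Proposition \ref{crude_prop} (whose off-line bound at $1/2+1/\log y$ is precisely what is needed) exactly as in the proof of Theorem \ref{t1}, yields Theorem \ref{t4}. The one thing to be vigilant about --- the crux of the improvement rather than a genuine obstacle --- is to never discard the factor $p^{-2\sigma_x}$: doing so, as in the original argument, would merely recover the weaker exponent $\log q$, whereas retaining it and invoking the shifted Mertens estimate delivers $\log y$.
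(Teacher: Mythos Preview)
Your proposal is correct and follows essentially the same route as the paper. The only cosmetic difference lies in the final variance step: the paper simply bounds $\sum_{p}|h(p)|^2/p^{1+2/\log y}$ by noting that the tail over $p>y$ is $O(1)$ (since $|h(p)|\ll 1$ and $\sum_{p>y}p^{-1-2/\log y}\ll 1$) and then applies Lemma \ref{mertenstype} with $x=y$ to the remaining sum over $p\le y$, rather than deriving a shifted Mertens estimate via $\log|\zeta(1+2\sigma_x+i\alpha)|$ as you do.
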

The proof is essentially the same as that of Theorem \ref{t1}, we briefly indicate the changes one needs to make. Firstly, we use Lemma \ref{variant} to upper bound $|L(s,\chi)|$. Recall, if $2\leq x\leq q$, then
\begin{equation*}
    \log |L(1/2+1/\log y+it, \chi)| \leq \Re \sum_{n\leq x}\frac{\chi(n)\Lambda(n)}{n^{1/2+\max(1/\log y, 1/\log x) +it} \log n}\frac{\log x/n}{\log x}+\frac{\log q+\log^+ t}{\log x}+O(1/\log x).
\end{equation*}

After that, we run the argument the same way as in the proof of Theorem \ref{t1}, the only (and essential) difference is that our Dirichlet polynomials over primes are weighted by $p^{-1/2-1/\log y}$ instead of $p^{-1/2}$. In particular, if we recall the definition of $C(\Tilde{p},\Tilde{q})$ and $P$ from the proof of Lemma \ref{mainfirst}, there we used $C(\Tilde{p},\Tilde{q})\leq P^{-1/2}$. In the proof of Theorem \ref{t4}, this becomes $C(\Tilde{p},\Tilde{q})\leq P^{-1/2-1/\log y}$. This extra saving is responsible for the difference in the upper bounds in Theorem \ref{t1} and Theorem \ref{t4} (the former involves powers of $\log q$, the latter has powers of $\log y$). To see this, following the proof of Lemma \ref{mainfirst} we get the upper bound
\begin{equation*}
\label{shiftedupperbound}
 \sum_{\chi \in \mathcal{T} } \big| L\big(1/2+1/\log y+ it_1,\chi \big) \big|^{a_1} \cdots \big| L\big(1/2+1/\log y+ it_{2k},\chi \big) \big|^{a_{2k}}\ll \phi(q)\exp\bigg(\sum_{p\leq q}\frac{|h(p)|^2}{p^{1+2/\log y} }\bigg),
\end{equation*}
where recall
$$h(p):=\frac{1}{2}(a_1p^{-it_1}+\cdots +a_{2k}p^{-it_{2k}}).$$
We uniformly have $|h(p)|\ll 1$ and that
$$\sum_{p>y} \frac{1}{p^{1+2/\log y} }\ll 1,$$
so the desired upper bound follows by Lemma \ref{mertenstype}.

\subsection{Proof of the first part of Theorem \ref{t3}}
We begin with a proposition that gives an upper bound on the integral of $L$-functions slightly to the right of the critical line. It can be thought of as a generalisation of (\ref{momentone}).
\begin{proposition}
\label{t3prop}
Let $10\leq B=q^{O(1)}$ and $2\leq y\leq q$. Then 
$$\sum_{\chi \in X_q^*}\bigg(\int_{0}^{B}|L(1/2+1/\log y+it,\chi)|dt\bigg)^{2k}\ll \phi(q)\big( B^3(\log \log B)^{O(1)}(\log y)^{(k-1)^2}+B^{2k}(\log \log B\cdot \log\log y)^{O(1)}(\log y)^{k^2-3k+3}\big)$$
\end{proposition}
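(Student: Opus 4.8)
The plan is to run the argument from the proof of Theorem~\ref{t2}, now on an interval of length $B$ and on the shifted line $\Re s=1/2+1/\log y$, using Theorem~\ref{t4} in place of Theorem~\ref{t1}; note the four shifts that will appear all lie in $[0,B]\subseteq[0,q^{O(1)}]$, so Theorem~\ref{t4} applies. Write $\ell(t,\chi):=|L(1/2+1/\log y+it,\chi)|$. One may first dispose of bounded $y$: if $y<e^{2}$ then $\Re s=1/2+1/\log y>1$, so $\ell(t,\chi)\ll 1$ and the left side is $\ll\phi(q)B^{2k}$, which is of the order of the second term; so I may assume $y$ is large (for $y$ merely bounded, all $\log\log y$ factors below are $O(1)$ and the number of dyadic scales is $O(1)$). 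Since $k>2$ we have $2k>4$, so, exactly as in Theorem~\ref{t2}, I would pull out three copies of the integral and symmetrise the remaining $u$-integral over permutations of $(t_1,t_2,t_3)$, reducing the left side to a constant times
$$\int_{[0,B]^3}\Big(\sum_{\chi\in X_q^*}\prod_{a=1}^3\ell(t_a,\chi)\Big(\int_{\mathcal D}\ell(u,\chi)\,du\Big)^{2k-3}\Big)\,d\mathbf t,\qquad \mathcal D=\{u\in[0,B]:|t_1-u|\le|t_2-u|\le|t_3-u|\},$$
which it then suffices to bound by the right-hand side of the proposition.

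The second step is the dyadic decomposition, identical to that of Theorem~\ref{t2} except that distances now range up to $B$. I would put $\mathcal B_1=[-\tfrac1{\log y},\tfrac1{\log y}]$, take $\mathcal B_j$ for $2\le j<K$ to be the dyadic annuli of radius $\asymp e^{j-1}/\log y$, and let $\mathcal B_K=[-B,B]\setminus\bigcup_{1\le j<K}\mathcal B_j$ be one big ``far'' bin, where $K$ is chosen so that $\mathcal B_2,\dots,\mathcal B_{K-1}$ just cover $[\tfrac1{\log y},10]$; then $K\ll 1+\log\log y$, with $|\mathcal B_j|\ll 1$ for $j<K$ and $|\mathcal B_K|\ll B$. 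Partitioning $\mathcal D$ according to $|t_1-u|\in\mathcal B_j$, applying Hölder with weights $1/j$ to bring the $(2k-3)$-th power inside (at the cost of a factor $\ll j^{2k-3}|\mathcal B_j|^{2k-4}$), and then partitioning further according to $|t_2-u|-|t_1-u|\in\mathcal B_l$ and $|t_3-u|-|t_2-u|\in\mathcal B_m$ into cells $\mathcal C_{j,l,m}$, I would reduce to bounding
$$\sum_{1\le j,l,m\le K}j^{2k-3}\,|\mathcal B_j|^{2k-4}\,\mathrm{Vol}(\mathcal C_{j,l,m})\,\sup_{\mathcal C_{j,l,m}}L(\mathbf t,u),\qquad L(\mathbf t,u):=\sum_{\chi\in X_q^*}\ell(u,\chi)^{2k-3}\prod_{a=1}^3\ell(t_a,\chi),$$
where $\mathrm{Vol}(\mathcal C_{j,l,m})\ll B\,|\mathcal B_j|\,|\mathcal B_l|\,|\mathcal B_m|$, the extra $B$ compared with Theorem~\ref{t2} coming from the free position of $u$ in $[0,B]$.

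For $\sup_{\mathcal C_{j,l,m}}L$ I would apply Theorem~\ref{t4} with $a_1=a_2=a_3=1,\ a_4=2k-3$, so that $(a_1^2+a_2^2+a_3^2+a_4^2)/4=k^2-3k+3$ and $\sum_{i<j}a_ia_j/2=3(k-1)$. Each of the six distances $|t_i-t_j|$, $|t_i-u|$ is governed by one of the bins: if the governing bin has index $<K$, then $g^*$ of the distance is $\asymp\log y$ (for $\mathcal B_1$) or $\asymp 1/|\mathcal B_i|$ (otherwise); if the governing bin is $\mathcal B_K$, the distance lies in $[10,B]$ and $g^*$ of it is $\le\log\log B$ — a bound valid uniformly on $[10,B]$ even when $B>e^y$, since $g^*$ is capped at $\log y=\log\log(e^y)\le\log\log B$ once $e^y\le B$. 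Feeding this in, the cell $\mathcal C_{1,1,1}$ and, after summing the geometric series, all cells with $j,l,m<K$ reproduce the bound of Theorem~\ref{t2} multiplied by the extra volume factor, contributing $\ll\phi(q)B(\log y)^{(k-1)^2}$; convergence of that geometric series is exactly where $k>2$ is used (as in Theorem~\ref{t2}) and it produces no $\log\log$ factor. In every cell in which at least one governing bin is $\mathcal B_K$ I would bound each large-scale $g^*$ by $\log\log B$, each small-scale $g^*$ by a power of $\log y$, sum the resulting geometrically convergent series over the small indices, and absorb the number of bins into $K^{O(1)}\ll(1+\log\log y)^{O(1)}$; since a large distance $|t_a-u|\ge10$ costs only an extra factor $\ll B$ of volume in that variable, each such cell is $\ll\phi(q)B^{2k}(\log\log B\cdot\log\log y)^{O(1)}(\log y)^{k^2-3k+3}$, the extreme case being $\mathcal C_{K,K,K}$, where $|\mathcal B_K|^{2k-4}\ll B^{2k-4}$, $\mathrm{Vol}\ll B^4$, all six $g^*\le\log\log B$ and $j^{2k-3}=K^{2k-3}$. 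Adding the two families of contributions (and overestimating $B\le B^3$ in the first) gives the proposition.

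The step I expect to be the main obstacle is the control of the far bin $\mathcal B_K$: there the Hölder weight $|\mathcal B_K|^{2k-4}\asymp B^{2k-4}$ is no longer negligible, so one must check carefully that a large distance buys at most one factor of $B$ of volume per variable, and that the uniform bound $g^*\le\log\log B$ on $[10,B]$ — which also absorbs the non-monotonicity of $g^*$ and its flattening to $\log y$ beyond $e^y$ — keeps every such cell within the claimed bound. The remaining work, i.e.\ the enumeration of which bins are ``near'' and which is $\mathcal B_K$ together with the attendant geometric summations, is routine and parallels the proof of Theorem~\ref{t2}.
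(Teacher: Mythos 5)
This is essentially the paper's proof: the same three-copy pull-out, the same dyadic cells $\mathcal C_{j,l,m}$ with a far bin $\mathcal B_K$, and the same application of Theorem~\ref{t4} with $a_1=a_2=a_3=1$, $a_4=2k-3$; your case split (all bins $<K$ versus at least one $=K$) differs only cosmetically from the paper's ($j<K$ versus $j=K$), and your sharper volume count $\mathrm{Vol}(\mathcal C_{j,l,m})\ll B\,|\mathcal B_j||\mathcal B_l||\mathcal B_m|$ would in fact give $B$ rather than the paper's (looser) $B^3$ in the first term. One phrase is misleading, though: in the far-bin cells the sums over the small indices are \emph{not} geometrically convergent. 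For instance with $j=K$ and $l,m<K$ one meets a sum of the shape $\sum_{l,m<K}e^{\min(l,m)/2}\asymp K e^{K/2}\asymp(\log\log y)\,(\log y)^{1/2}$, and it is exactly this extra half power of $\log y$, multiplied against the $(\log y)^{k^2-3k+5/2}$ coming from the $g^*$-product, that produces the claimed exponent $k^2-3k+3$; your per-cell bound and conclusion are still right (each cell is $\ll\phi(q)B^{2k}(\log\log B\cdot\log\log y)^{O(1)}(\log y)^{k^2-3k+3}$ and there are $K^{O(1)}$ cells), but that growing factor must be tracked, as the paper does explicitly. Also, for $y$ near $e^2$ one cannot conclude $\ell(t,\chi)\ll1$ merely from $\Re s>1$, since the bound degenerates as $\Re s\to1^+$; but, as you note, for bounded $y$ all $\log$-factors are $O(1)$ and the main argument (or Proposition~\ref{crude_prop} plus H\"older) already gives $\ll\phi(q)B^{2k}$.
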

\begin{proof}
The proof is very similar to that of (\ref{momentone}), so we will not explain it in much detail, but go through the main steps. For each $\chi$, by symmetry
\begin{align*}
    \bigg(\int_{0}^{B} |L(1/2+1/\log y+ it, \chi)| dt\bigg)^{2k} 
      \ll \int_{[0,B]^3}\prod_{a=1}^3|L(1/2+1/\log y+ it_a, \chi)| \bigg(\int_{\mathcal{D} }|L(1/2+1/\log y+iu, \chi)|du \bigg)^{2k-3} d\mathbf{t}, \\
\end{align*}
where $\mathcal{D}=\mathcal{D}(t_1,t_2,t_3)=\{ u\in [0,B]:|t_1-u|\leq |t_2-u|\leq |t_3-u| \}$.

Let $\mathcal{B}_1=\big[-\frac{1}{\log y},\frac{1}{\log y}\big]$. For $2\leq j< \lfloor\log\log y\rfloor+1=:K$ let $\mathcal{B}_j=\big[-\frac{e^{j-1}}{\log y}, -\frac{e^{j-2}}{\log y}\big]\cup \big[\frac{e^{j-2}}{\log y}, \frac{e^{j-1}}{\log y}\big] $. Let $\mathcal{B}_K=[-B,B]\setminus \bigcup_{1\leq j<K} \mathcal{B}_j $.

Then, for any $t_1\in [0,B]$ we have $\mathcal{D}\subset [0,B] \subset t_1+[-B,B]\subset \bigcup_{1\leq j\leq K} t_1+\mathcal{B}_j$, so if we let $\mathcal{A}_j=\mathcal{B}_j\cap (-t_1+\mathcal{D})$ then $(t_1+\mathcal{A}_j)_{1\leq j\leq K}$ form a partition of $\mathcal{D}$. Applying Hölder twice we get
\begin{align*}
    & \bigg(\int_{\mathcal{D}}|L(1/2+1/\log y+ iu, \chi)|du\bigg)^{2k-3} \\
      \leq & \bigg( \sum_{1\leq j\leq K} \frac{1}{j}\cdot  j \int_{t_1+\mathcal{A}_j} |L(1/2+1/\log y+iu, \chi)|du  \bigg)^{2k-3} \\
     \leq & \bigg(\sum_{1\leq j\leq K} j^{2k-3} \bigg( \int_{t_1+\mathcal{A}_j} \big|L(1/2+1/\log y+iu, \chi)\big|du  \bigg)^{2k-3}\bigg) \bigg(\sum_{1\leq j\leq K } j^{ (2k-3)/(2k-4)} \bigg)^{2k-4} \\
     \ll & \sum_{1\leq j\leq K} j^{2k-3} \bigg( \int_{t_1+\mathcal{A}_j} |L(1/2+1/\log y+iu, \chi)| \bigg)^{2k-3} \\
     \leq & \sum_{1\leq j\leq K} j^{2k-3} |\mathcal{B}_j|^{2k-4} \int_{t_1+\mathcal{A}_j} |L(1/2+1/\log y+iu, \chi)|^{2k-3}du. \\
\end{align*}
For simplicity, for $\mathbf{t}=(t_1,t_2,t_3)$ let us denote
$$L(\mathbf{t},u)= \sum_{\chi\in X_q^*}\prod_{a=1}^3|L(1/2+1/\log y+it_a, \chi)|  |L(1/2+1/\log y+iu, \chi)|^{2k-3}.$$
We have shown that
\begin{align*}
    \sum_{\chi \in X_q^*}\bigg(\int_{0}^{B}|L(1/2+1/\log y+it,\chi)|dt\bigg)^{2k}\ll & \sum_{1\leq j\leq K} j^{2k-3} |\mathcal{B}_j|^{2k-4} \int_{[0,B]^3}\int_{t_1+\mathcal{A}_j} L(\mathbf{t},u)du d\mathbf{t}  \\
     \ll &  \sum_{1\leq j, l, m\leq K} j^{2k-3} |\mathcal{B}_j|^{2k-4} \int_{\mathcal{C}_{j,l,m}} L(\mathbf{t},u)du d\mathbf{t}, 
\end{align*}
where
$$\mathcal{C}_{j,l,m}=\{(t_1,t_2,t_3,u)\in [0,B]^4: u\in t_1+ \mathcal{A}_j,\, |t_2-u|-|t_1-u|\in \mathcal{B}_l,\, |t_3-u|-|t_2-u|\in \mathcal{B}_m \}.$$
We now separate two cases in the summation according to the size of $j$.

\textbf{Case 1:} $j<K$. This is essentially the same as the proof of \eqref{momentone}. The volume of the region $\mathcal{C}_{j,l,m}$ is $V(\mathcal{C}_{j,l,m}) \ll  B^3\frac{e^{j+l+m} }{(\log y)^3}$. Moreover, when $(\mathbf{t},u)\in \mathcal{C}_{j,l,m} $
$$L(\mathbf{t},u)\ll \phi(q)(\log y)^{k^2}(\log \log B)^{O(1)} \exp\Big( -\frac{2k-3}{2}(j+\max \{j,l \}+\max \{j,l,m \})-\frac{1}{2}(l+m+\max \{l,m \} ) \Big).$$
We have $ |\mathcal{B}_j|\ll \frac{e^j}{\log y}$, so in total we obtain 
\begin{align*}
       &  \sum_{\substack{1\leq j<K \\ 1\leq l,m \leq K}} j^{2k-3} |\mathcal{B}_j|^{2k-4} \int_{\mathcal{C}_{j,l,m}} L(\mathbf{t},u)du d\mathbf{t} \\
    \ll & \phi(q)(\log y)^{(k-1)^2} B^3(\log\log B)^{O(1)} \cdot \\
    &  \cdot \sum_{\substack{1\leq j<K \\ 1\leq l,m \leq K}} j^{2k-3} \exp\Big( j(k-3/2)+l/2+m/2-(k-3/2)( \max \{j,l \}+\max \{j,l,m \})-\max\{l,m \}/2\Big) \\
    \ll &  \phi(q)(\log y)^{(k-1)^2} B^3(\log\log B)^{O(1)}\sum_{1\leq j,l,m\leq K} j^{2k-3} e^{-(k-2)\max \{j,l,m \}} \\
    \ll &  \phi(q)(\log y)^{(k-1)^2} B^3(\log\log B)^{O(1)} \\
\end{align*}
\textbf{Case 2} $j=K$. The volume of the region $\mathcal{C}_{K,l,m}$ is $\ll B^4\frac{e^{l+m} }{(\log y)^2}$. For each $i=1,2,3$ we have $g^*(|t_i-u|)\ll \log\log  B$, $g^*(|t_1-t_2|)\ll\frac{\log y}{e^l} \log\log  B$,  $g^*(|t_2-t_3|)\ll\frac{\log y}{e^m} \log\log  B$,  $g^*(|t_1-t_3|)\ll\frac{\log y}{e^l }\log\log  B$, so
$$  L(\mathbf{t},u) \ll  \phi(q) (\log y)^{k^2-3k+9/2}(\log \log B)^{O(1)}e^{-l-m/2}$$ 
Finally, $|\mathcal{B}_K|\ll B$, so
\begin{align*}
     \sum_{1\leq  l, m\leq K} K^{2k-3} |\mathcal{B}_K|^{2k-4} \int_{\mathcal{C}_{K,l,m}} L(\mathbf{t},u)du d\mathbf{t}\ll & \phi(q) (\log y)^{k^2-3k+5/2}B^{2k}(\log \log B)^{O(1)}(\log\log y)^{O(1)}\sum_{1\leq  l, m\leq K} e^{m/2} \\
     \ll &  \phi(q) (\log y)^{k^2-3k+3}B^{2k}(\log \log B)^{O(1)}(\log \log y)^{O(1)} \\
\end{align*}
\end{proof}
As stated in Theorem 3, when $y\leq q^{1/2}$ we would like to show that 
\begin{equation}
\label{theorem3firstpart}
    \sum_{\chi\in X_q^*} \bigg|\sum_{n\leq y}\chi(n)\bigg|^{2k}\ll y^k\phi(q) (\log y)^{(k-1)^2}.
\end{equation}
We first consider a weighted version of this moment. We choose the weight to be piecewise linear and continuous. This ensures that the corresponding Perron integral is absolutely convergent.
\begin{lemma}
\label{fweight}
Let $2\leq y\leq q$ and $T=y/(\log y)^C$, where $C>0$ is a fixed constant.
Let $f(x)=1$ if $0<x\leq y-T$, $f(x)=1-\frac{x-y+T}{T}$ if $y-T\leq x\leq y$, and $f(x)=0$ if $x\geq y$. One has
\begin{equation}
\label{theorem3firstweighted}
\sum_{\chi\in X_q^*} \bigg|\sum_{n\leq y}f(n)\chi(n)\bigg|^{2k}\ll y^k\phi(q) (\log y)^{(k-1)^2}.
\end{equation}
\end{lemma}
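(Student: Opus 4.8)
\textbf{Plan of proof for Lemma \ref{fweight}.}

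The strategy is to write the weighted character sum as an absolutely convergent Perron-type integral and then apply Proposition \ref{t3prop}. Since $f$ is continuous and piecewise linear, its Mellin transform $\widetilde f(s) = \int_0^\infty f(x) x^{s-1}\,dx$ decays like $|s|^{-2}$ on vertical lines (two integrations by parts, the second derivative of $f$ being a sum of two point masses of size $1/T$ at $y-T$ and $y$). Concretely, for $\Re s = \sigma > 0$ one gets the bound $|\widetilde f(s)| \ll \min\{ y^{\sigma}/\sigma, y^{\sigma}/(T|s|^2)\}$, uniformly. Thus for any $c>0$,
\begin{equation*}
\sum_{n\leq y} f(n)\chi(n) = \frac{1}{2\pi i}\int_{(c)} L(s,\chi)\,\widetilde f(s)\,ds,
\end{equation*}
the integral being absolutely convergent by the quadratic decay. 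I would then shift the contour from $c = 1 + 1/\log y$ to $\Re s = 1/2 + 1/\log y$; there is no pole (the only possible pole of $L(s,\chi)$ at $s=1$ occurs for $\chi$ principal, which is not in $X_q^*$ for $q>1$), and the horizontal segments at height $\pm\infty$ vanish by the decay of $\widetilde f$, so
\begin{equation*}
\sum_{n\leq y} f(n)\chi(n) = \frac{1}{2\pi i}\int_{1/2+1/\log y - i\infty}^{1/2+1/\log y + i\infty} L(s,\chi)\,\widetilde f(s)\,ds.
\end{equation*}
On this line $|\widetilde f(s)| \ll y^{1/2}\min\{1, (T|s|^2)^{-1}\} \ll y^{1/2}\min\{1,|s|^{-2}\}$ since $T = y/(\log y)^C$ is polynomially close to $y$; in particular the weight $y^{1/2}|s|^{-2}$ is integrable and, crucially, decays fast enough in $|t|$ that — unlike the naive $1/|s|$ Perron kernel — it does not introduce a spurious power of $\log y$.

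Next I would take $2k$-th moments. Writing $s = 1/2 + 1/\log y + it$ and pulling absolute values inside,
\begin{equation*}
\sum_{\chi\in X_q^*}\Big|\sum_{n\leq y} f(n)\chi(n)\Big|^{2k} \ll y^{k}\sum_{\chi\in X_q^*}\Big(\int_{-\infty}^{\infty} \frac{|L(1/2+1/\log y+it,\chi)|}{(|t|+1)^{2}}\,dt\Big)^{2k}.
\end{equation*}
I would then dyadically decompose the $t$-integral into the ranges $|t|\leq 1$ and $2^{r}\leq |t| < 2^{r+1}$ for $r\geq 0$, insert suitable auxiliary weights (as in the proof of Lemma \ref{seged}) so that Hölder over the dyadic blocks costs only a constant, and apply Proposition \ref{t3prop} with $B \asymp 2^{r+1}$ on each block. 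The block at height $2^r$ carries a factor $2^{-2r \cdot 2k}$ from the kernel $(|t|+1)^{-2}$, against which Proposition \ref{t3prop} contributes at most $\phi(q)(2^{3r}(\log\log B)^{O(1)}(\log y)^{(k-1)^2} + 2^{2kr}(\ldots)(\log y)^{k^2-3k+3})$; since $k>2$ we have $3 < 2k$ and $k^2-3k+3 = (k-1)^2 - (k-2) < (k-1)^2$, so after summing the geometric series in $r$ the leading term $\phi(q)(\log y)^{(k-1)^2}$ dominates and all tails converge. This yields \eqref{theorem3firstweighted}.

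\textbf{Main obstacle.} The technical heart is establishing the decay $|\widetilde f(s)| \ll y^{1/2}\min\{1,|s|^{-2}\}$ on the shifted line with the correct dependence on $T$, and then verifying that Proposition \ref{t3prop}, which is stated for an interval $[0,B]$ with $B = q^{O(1)}$, can be summed against the kernel $(|t|+1)^{-2}$ over \emph{all} of $\mathbb{R}$ — i.e. checking that the contribution of the far tail $|t| > q^{O(1)}$ is genuinely negligible (here one falls back on the crude convexity bound $|L(1/2+it,\chi)| \ll q(|t|+2)$, exactly as in the bound for $I_2$ in the proof of Theorem \ref{t2}). The bookkeeping of the two competing terms in Proposition \ref{t3prop} across dyadic blocks, and confirming that the exponent $(k-1)^2$ rather than $k^2-3k+3$ or $3$ emerges, is where one must be careful; but the inequality $k>2$ makes every comparison go the right way.
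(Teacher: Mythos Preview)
Your overall strategy---Mellin inversion, shift to $\Re s=1/2+1/\log y$, dyadic decomposition in $t$, then Proposition~\ref{t3prop} block by block---is exactly the paper's approach. However, there is a concrete error in your Mellin bound that, as written, breaks the argument.

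Two integrations by parts give
\[
\widetilde f(s)=\frac{1}{s(s+1)}\int_0^\infty f''(x)\,x^{s+1}\,dx
=\frac{y^{s+1}-(y-T)^{s+1}}{T\,s(s+1)},
\]
so the correct decay bound is $|\widetilde f(s)|\ll y^{\sigma+1}/(T|s|^2)$, not $y^{\sigma}/(T|s|^2)$ as you wrote. On the line $\sigma=1/2+1/\log y$ this is
\[
|\widetilde f(s)|\ll \frac{y^{3/2}}{T(|t|+1)^2}=\frac{y^{1/2}(\log y)^C}{(|t|+1)^2},
\]
and the factor $(\log y)^C=y/T$ does \emph{not} disappear just because ``$T$ is polynomially close to $y$''. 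If you use this kernel uniformly, the $2k$-th moment picks up an extra $(\log y)^{2kC}$, which destroys the exponent $(k-1)^2$.

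The paper repairs this by also recording the one-integration-by-parts bound $|\widetilde f(s)|\ll y^{1/2}/(|t|+1)$, which carries no $(\log y)^C$ loss, and then splitting the $t$-integral at a threshold $A=(\log y)^D$ with $D$ large: for $|t|\leq A$ use the $1/(|t|+1)$ kernel, for $|t|>A$ use the $(\log y)^C/(|t|+1)^2$ kernel. In the upper range the unwanted $(\log y)^{2kC}$ is absorbed by the extra saving of roughly $A^{-2k}=(\log y)^{-2kD}$ coming from the faster decay. With this two-regime bound in place, your dyadic Hölder argument and the appeal to Proposition~\ref{t3prop} go through exactly as you describe (and as the paper does).
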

\begin{proof}
We have
$$\int_{0}^{\infty} f(x)x^s \frac{dx}{x}=\frac{1}{Ts(s+1)}(y^{s+1}-(y-T)^{s+1}),$$
so by Perron's formula (or the Mellin inversion formula), for each $c>1$ one has
$$\sum_{n\leq y}f(n)\chi(n)=\frac{1}{2\pi i}\int_{c-i\infty}^{c+i\infty} \frac{L(s,\chi)}{Ts(s+1)}(y^{s+1}-(y-T)^{s+1})ds,$$
since the integral is absolutely convergent. We may shift the line of integration to the left and choose $c=1/2+1/\log y$. Also, by symmetry it is enough to consider the integral in the upper half-plane. Let $A:=(\log y)^D$, where $D$ is a large constant specified later. We break up the resulting integral into three pieces according to the size of $t:=\Im s$. In fact, our three regions of integration will be $[0,A]$, $[A,q]$ and $[q,\infty]$. When $0\leq t\leq A$ and $\Re s=1/2+1/\log y$, we use the estimate $|y^{s+1}-(y-T)^{s+1}|\ll |(s+1)y^{s}T|\ll T(t+1)y^{1/2}$, when $t>A$ we use $|y^{s+1}-(y-T)^{s+1}|\ll y^{3/2}$ and recall that $T=y/(\log y)^C$. We get
\begin{equation}
\label{breakup}
    \begin{split}
    \sum_{\chi \in X_q^*} \bigg|\sum_{n\leq y}f(n)\chi(n)\bigg|^{2k}\ll & y^k\sum_{\chi \in X_q^*}\bigg(\int_0^A \frac{|L(1/2+1/\log y +it,\chi)|}{t+1}dt\bigg)^{2k} \\  
    +& y^k(\log y)^{2kC}\sum_{\chi \in X_q^*} \bigg( \int_{A}^q \frac{|L(1/2+1/\log y+it,\chi) |}{t^2}  dt \bigg)^{2k} \\
    + & y^k(\log y)^{2kC} \sum_{\chi \in X_q^*} \bigg( \int_{q}^{\infty} \frac{|L(1/2+1/\log y+it,\chi) |}{t^2} dt \bigg)^{2k}. \\
\end{split}
\end{equation}
Let us deal with the integrals ranging from $0$ to $A$. We would like to use the decay of $1/(t+1)$ efficiently, so we break up each integral via Hölder's inequality. For each $\chi\in X_q^*$ we obtain
\begin{equation*}
\begin{split}
    \bigg(\int_0^A \frac{|L(1/2+1/\log y +it,\chi)|}{t+1}dt\bigg)^{2k}&\leq \bigg(\sum_{n\leq \log A+1} n^{-2k/(2k-1)} \bigg)^{2k-1}\sum_{n\leq \log A+1} \bigg(n\int_{e^{n-1}-1}^{e^{n}-1 } \frac{|L(1/2+1/\log y+it,\chi) |}{t+1} dt\bigg)^{2k}   \\
    &\ll \sum_{n\leq \log A+1} \frac{n^{2k} }{e^{2nk} } \bigg( \int_{e^{n-1}-1}^{e^{n}-1 } |L(1/2+1/\log y+it,\chi) | dt \bigg)^{2k}.
\end{split}
\end{equation*}
Using this inequality and Proposition \ref{t3prop}, we obtain
\begin{align*}
  & \sum_{\chi \in X_q^*} \bigg( \int_{0}^A \frac{|L(1/2+1/\log y+it,\chi) |}{t+1} dt \bigg)^{2k}  \\ 
   \ll &  \sum_{n\leq \log A+2} \frac{n^{2k} }{e^{2nk}} \sum_{\chi \in X_q^*} \bigg( \int_{e^{n-1}-1}^{e^{n}-1 } |L(1/2+1/\log y+it,\chi) | dt \bigg)^{2k} \\
  \ll & \phi(q)\sum_{n\leq \log A+2}\frac{n^{2k}}{e^{2nk} } \Big((\log y)^{(k-1)^2} e^{3n}(\log 2n)^{O(1)} + (\log y)^{k^2-3k+3}e^{2nk} (\log 2n)^{O(1)}(\log\log y)^{O(1)}\Big) \\
  \ll & \phi(q)(\log y)^{(k-1)^2}. \\
\end{align*}
Note that we used $2k-3>0$ and that $k^2-3k+3<(k-1)^2$ in the last line.

Next we deal with the integrals ranging from $A$ to $q$. The argument is the same as in the previous case. We use Hölder's inequality and Proposition \ref{t3prop} to get the upper bound
\begin{align*}
  & \sum_{\chi \in X_q^*} \bigg( \int_{A}^q \frac{|L(1/2+1/\log y+it,\chi) |}{t^2}  dt \bigg)^{2k}  \\ 
   \ll & \sum_{\log A-1\leq n\leq \log q} \frac{n^{2k} }{e^{4nk}} \sum_{\chi \in X_q^*} \bigg( \int_{e^{n}}^{e^{n+1} } |L(1/2+1/\log y+it,\chi) | dt \bigg)^{2k} \\
  \ll &  \sum_{\log A-1\leq n \leq \log q}\frac{n^{2k}}{e^{4nk} }\cdot \phi(q) \Big((\log y)^{(k-1)^2} e^{3n} (\log 2n)^{O(1)}+e^{2nk} (\log 2n)^{O(1)} (\log\log y)^{O(1)}(\log y)^{k^2-3k+3}\Big)  \\
  \ll & \phi(q)(\log y)^{(k-1)^2-D}. \\
\end{align*}
Using the pointwise bound $|L(1/2+1/\log y+it,\chi)|\ll (qt)^{1/2}$  when $t\geq q$ (see Lemma 10.15 in \cite{montgomery2007multiplicative}), we get
$$ \sum_{\chi \in X_q^*} \bigg( \int_{q}^{\infty} \frac{|L(1/2+1/\log y+it,\chi) |}{t^2} dt \bigg)^{2k} \ll \phi(q)$$
Choosing $D$ large enough in terms of $C$ and $k$, we obtain (\ref{theorem3firstweighted}).
\end{proof}
Next, we show that the weighted moment is close to the unweighted one. 

\begin{lemma}
\label{fdiff}
Let $f$ be the function defined in Lemma \ref{fweight} with some fixed $C>0$. We then have
    \begin{equation}
\label{theorem3firstrest}
\sum_{\chi\in X_q^*} \bigg|\sum_{n\leq y}(1-f(n))\chi(n)\bigg|^{2k}\ll \phi(q)y^k(\log y)^{2k^2-C/2}
\end{equation}
\end{lemma}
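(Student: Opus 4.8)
The weight $1-f$, restricted to $n\le y$, is supported on the short interval $(y_0,y]$ (where $y_0=y-T$, $T=y/(\log y)^C$) and takes values in $[0,1]$; writing $a_n=(n-y_0)/T$, what has to be bounded is the $2k$-th moment over primitive $\chi$ of the short character sum $E_y(\chi):=\sum_{y_0<n\le y}a_n\chi(n)$. Since $a_n=\tfrac1T\#\{m\in\mathbb Z:y_0<m\le n\}+O(1/T)$, summation by parts gives, up to an admissible $O(1)$, the identity $E_y(\chi)=\tfrac1T\sum_{y_0<m\le y}\bigl(S_y(\chi)-S_{m-1}(\chi)\bigr)$ with $S_x(\chi)=\sum_{n\le x}\chi(n)$, so by convexity $|E_y(\chi)|^{2k}\ll 1+\tfrac1T\sum_{y_0<m\le y}\bigl|\sum_{m\le n\le y}\chi(n)\bigr|^{2k}$. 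It thus suffices to bound $\sum_{\chi\in X_q^*}\bigl|\sum_{m\le n\le y}\chi(n)\bigr|^{2k}$ uniformly for $m\in(y_0,y]$, i.e.\ for character sums over intervals of length at most $T$ with right endpoint $y$.

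For such an interval I would use the (truncated) Perron representation $\sum_{m\le n\le y}\chi(n)=\tfrac1{2\pi i}\int_{\sigma_0-iY}^{\sigma_0+iY}L(s,\chi)\tfrac{y^s-(m-1)^s}{s}\,ds+O\!\bigl(\tfrac yY(\log y)^{O(1)}\bigr)$ with $\sigma_0=1/2+1/\log y$ (shifting from $\Re s=1+1/\log y$ costs no residue since $\chi$ is primitive mod $q>1$, and taking $Y\ge y^{1/2}(\log y)^{C}$ makes the Perron error harmless after raising to the $2k$-th power and summing over $\chi$). The gain comes from the shortness of the interval: since $y-m\le T$, one has $|y^s-(m-1)^s|\ll|s|\,T\,y^{-1/2}$, so the part of the contour with $|\Im s|\le(\log y)^C$ contributes $\ll\tfrac T{\sqrt y}\int_0^{(\log y)^C}|L(1/2+1/\log y+iv,\chi)|\,dv$. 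Raising to the $2k$-th power, summing over $\chi$, and applying Proposition \ref{t3prop} with $B=(\log y)^C$, the factor $(T/\sqrt y)^{2k}=y^k(\log y)^{-2kC}$ cancels the $B^{2k}=(\log y)^{2kC}$ there, leaving a contribution $\ll\phi(q)y^k(\log y)^{k^2-3k+3}(\log\log\log y)^{O(1)}$. Since $k^2-3k+3<2k^2$, this is $\le\phi(q)y^k(\log y)^{2k^2-C/2}$ provided $C$ is large enough — which is exactly why the crude exponent $2k^2-C/2$ in the statement is acceptable (in the application to Theorem \ref{t3} one simply fixes $C$ depending only on $k$).

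The delicate point, and the main obstacle, is the remaining tail of the contour, where $(\log y)^C\le|\Im s|\le Y\asymp y^{1/2}$. There the weight $\tfrac{y^s-(m-1)^s}{s}$ is only of size $\ll\sqrt y/|\Im s|$, and bounding the integrand in absolute value and applying Proposition \ref{t3prop} blockwise over dyadic ranges of $|\Im s|$ loses powers of $y$ — the square-root cancellation in the short interval sum has to be kept. I would instead control this tail through Pólya's Fourier expansion for character sums over intervals combined with the crude moment input of Proposition \ref{crude_prop}, following the strategy of Montgomery and Vaughan \cite{montgomery1979mean} that is also used for the second part of Theorem \ref{t3}: Pólya's identity replaces $\sum_{m\le n\le y}\chi(n)$ by $\tau(\chi)$ times a dual sum over residues mod $q$ whose weight, because the interval has length only $T=y/(\log y)^C$, is concentrated on a range of size $\asymp q/T$, and it is this concentration that manufactures the saving of $(\log y)^{-C/2}$ relative to the trivial $\phi(q)y^k$. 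Combining the near-critical-line contribution, the tail contribution, and the negligible Perron truncation error, and then summing over $m\in(y_0,y]$ (the average over $m$ being dominated by the worst $m$), yields $\sum_{\chi\in X_q^*}|E_y(\chi)|^{2k}\ll\phi(q)y^k(\log y)^{2k^2-C/2}$.
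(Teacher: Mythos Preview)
Your approach is substantially more complicated than the paper's, and it has a genuine gap. The paper's argument is a one-line Cauchy--Schwarz:
\[
\sum_{\chi\in X_q^*}\Big|\sum_{n\le y}(1-f(n))\chi(n)\Big|^{2k}
\le\Big(\sum_{\chi}\Big|\sum_{n}(1-f(n))\chi(n)\Big|^{2}\Big)^{1/2}
\Big(\sum_{\chi}\Big|\sum_{n}(1-f(n))\chi(n)\Big|^{4k-2}\Big)^{1/2}.
\]
The first factor is handled purely by orthogonality: it is at most $\big(\phi(q)\sum_n|1-f(n)|^2\big)^{1/2}\le(\phi(q)T)^{1/2}=(\phi(q)y)^{1/2}(\log y)^{-C/2}$, and this is where the entire $-C/2$ saving comes from. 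The second factor needs only the \emph{crude} bound $\phi(q)y^{2k-1}(\log y)^{4k^2}$, obtained by splitting $1-f$ into $1$ and $f$, using Lemma~\ref{fweight} for the $f$-part, and a direct Perron argument (with no short-interval input at all) for the unweighted sum. Multiplying the two factors yields exactly $\phi(q)y^k(\log y)^{2k^2-C/2}$.

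Your route instead tries to push the short-interval saving through a Perron integral for $\sum_{m\le n\le y}\chi(n)$, and two problems arise. First, your near-critical-line contribution is $\ll\phi(q)y^k(\log y)^{k^2-3k+3+o(1)}$, an exponent that is \emph{independent of $C$}; it cannot be $\le(\log y)^{2k^2-C/2}$ once $C>2(k^2+3k-3)$, so the lemma as stated (for arbitrary fixed $C>0$) does not follow --- your phrase ``provided $C$ is large enough'' points in the wrong direction. Second, and this is the real gap, the tail $(\log y)^C\le|\Im s|\le Y$ is only sketched. The P\'olya/Montgomery--Vaughan mechanism you invoke is effective when the dual sum is short relative to $q$, but for an interval of length $T\le y\le q^{1/2}$ the off-diagonal contribution in the Montgomery--Vaughan orthogonality step is of size $q^{k+o(1)}$, whereas your target $\phi(q)y^k(\log y)^{O(1)}$ is at most $q^{1+k/2+o(1)}$; for $k>2$ the former dominates. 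In the paper that device is used only for sums of length $\ge q^{1/2}$ (Proposition~\ref{prop4}), and even there it has to be supplemented by GRH input for part of the range. So the tail bound you need does not come out of P\'olya's formula in the regime $y\le q^{1/2}$. The paper sidesteps all of this by never analysing the short-interval character sum directly: the saving of $(\log y)^{-C/2}$ is extracted entirely at the level of the second moment via orthogonality.
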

\begin{proof}
By the Cauchy-Schwarz inequality
\begin{equation}
\label{pocs1}
    \sum_{\chi\in X_q^*} \bigg|\sum_{n\leq y}(1-f(n))\chi(n)\bigg|^{2k}\leq \bigg(\sum_{\chi\in X_q^*} \bigg|\sum_{n\leq y}(1-f(n))\chi(n)\bigg|^2\bigg)^{1/2}\bigg(\sum_{\chi\in X_q^*} \bigg|\sum_{n\leq y}(1-f(n))\chi(n)\bigg|^{4k-2}\bigg)^{1/2}.
\end{equation}
Firstly, using the orthogonality of characters, we get
\begin{equation}
\label{pocs2}
    \sum_{\chi\in X_q^*} \bigg|\sum_{n\leq y}(1-f(n))\chi(n)\bigg|^2\leq \phi(q) \sum_{n\leq y}|1-f(n)|^2\leq \phi(q)y(\log y)^{-C}.
\end{equation}
Secondly,
\begin{equation}
\label{pocs4}
\sum_{\chi\in X_q^*} \bigg|\sum_{n\leq y}(1-f(n))\chi(n)\bigg|^{4k-2}\ll \sum_{\chi\in X_q^*} \bigg|\sum_{n\leq y}\chi(n)\bigg|^{4k-2}+\sum_{\chi\in X_q^*} \bigg|\sum_{n\leq y}f(n)\chi(n)\bigg|^{4k-2}
\end{equation}
We only need to estimate the first sum on RHS as the second one is already dealt with. By Perron's formula
\begin{align*}
    2\pi i \sum_{n\leq y}\chi(n)= & \int_{1+1/\log y-iy}^{1+1/\log y+iy}L(s,\chi) \frac{y^s}{s}ds +O(\log^2 y)\\
     = &\int_{1/2+1/\log y-iy}^{1/2+1/\log y+iy} +\int_{1+1/\log y -iy}^{1/2+1/\log y-iy}+\int_{1/2+1/\log y+iy}^{1+1/\log y+iy} L(s,\chi)\frac{y^s}{s}ds +O(\log^2 y) \\
\end{align*}
We first address the moments of the horizontal integrals. We may assume that $y\geq 10$, otherwise the lemma is trivial. By symmetry we need to consider only one of them. We have $|y^s/s|\ll 1$ in that range, so applying Hölder's inequality we get
\begin{align*}
 \sum_{\chi \in X_q^*}\bigg| \int_{1/2+1/\log y+iy}^{1+1/\log y+iy} L(s,\chi)\frac{y^s}{s}ds\bigg|^{4k-2}\ll & \sum_{\chi \in X_q^*}\bigg( \int_{1/2+1/\log y+iy}^{1+1/\log y+iy} |L(s,\chi)| |ds| \bigg)^{4k-2} \\
 \ll &  \sum_{\chi \in X_q^*} \int_{1/2+1/\log y+iy}^{1+1/\log y+iy} |L(s,\chi)|^{4k-2} |ds| \\
 \ll & \phi(q)(\log y)^{(2k-1)^2}. \\
\end{align*}
Here in the last line we used that if $ 1/2+1/\log y\leq \Re s\leq 1+1/\log y$, then 
$$\sum_{\chi \in X_q^*}|L(s,\chi)|^{4k-2} \ll \phi(q)(\log y)^{(2k-1)^2},$$
which is a consequence of Theorem \ref{t4} since $y\geq 10$.
The vertical integral is handled by Proposition 2 and Hölder's inequality, similarly to previous cases. We have
\begin{align*}
    & \sum_{\chi \in X_q^*}\bigg|\int_{1/2+1/\log y-iy}^{1/2+1/\log y+iy}L(s,\chi) \frac{y^s}{s}ds\bigg|^{4k-2} \\
  \ll & y^{2k-1}\sum_{\chi \in X_q^*} \bigg( \int_{0}^y \frac{|L(1/2+1/\log y+it,\chi) |}{t+1} dt \bigg)^{4k-2}  \\ 
   \ll &  y^{2k-1}\sum_{n\leq \log y+2} \frac{n^{4k-2} }{e^{(4k-2)n}} \sum_{\chi \in X_q^*} \bigg( \int_{e^{n-1}-1}^{e^{n}-1 } |L(1/2+1/\log y+it,\chi) | dt \bigg)^{4k-2} \\
  \ll & y^{2k-1}\phi(q)(\log y)^{(2k-1-1)^2} \sum_{n\leq \log y+2}\frac{n^{4k-2}}{e^{(4k-2)n} } +y^k\phi(q)(\log y)^{(2k-1)^2-3(2k-1)+3}(\log\log y)^{O(1)} \sum_{n\leq \log y+2}n^{4k-2} \\
  \ll & y^{2k-1}\phi(q)(\log y)^{4k^2}. \\
\end{align*}
This implies the crude bound
\begin{equation}
\label{pocs3}
   \sum_{\chi\in X_q^*} \bigg|\sum_{n\leq y}\chi(n)\bigg|^{4k-2}\ll y^{2k-1}\phi(q)(\log y)^{4k^2} .
\end{equation}
Thus, recalling \eqref{pocs4} and using Lemma \ref{fweight} we have shown
$$\sum_{\chi\in X_q^*} \bigg|\sum_{n\leq y}(1-f(n))\chi(n)\bigg|^{4k-2}\ll y^{2k-1}\phi(q)(\log y)^{4k^2}.$$
Now we combine this inequality with \eqref{pocs1} and \eqref{pocs2} to get the lemma.
\end{proof}
Now adding up the inequalities in Lemma \ref{fweight} and \ref{fdiff} with $C=4k^2$, say, yields the first part of Theorem \ref{t3}.
\subsection{Proof of the second part of Theorem \ref{t3}}
We now show the second part of Theorem 3, that is if $y\leq q^{1/2}$, then
\begin{equation}
\label{theorem3secondpart}
    \sum_{\chi\in X_q^*} \bigg|\sum_{n\leq q/y}\chi(n)\bigg|^{2k}\ll (q/y)^k\phi(q) (\log 2y)^{(k-1)^2}.
\end{equation}
The proof is similar to that of (\ref{theorem3firstpart}), except shift the line of integration to the vertical line with real part $c=1/2-1/\log y$ and then use the inequality $|L(s,\chi)|\ll (tq)^{1/2-\sigma}|L(1-s,\bar\chi)|$, which is a consequence of the functional equation. Again, we start with a weighted version. In addition we assume that $y\geq 10$. 
We change the definition of $f$ slightly. Let $f(x)=1$ if $0<x\leq q/y-T$, $f(x)=1-\frac{x-q/y+T}{T}$ if $q/y-T\leq x\leq q/y$, and $f(x)=0$ if $x\geq q/y$, where we choose $T=\frac{q}{y(\log y)^C}$, where $C$ is a sufficiently large fixed constant. Using Perron's formula and $|L(s,\chi)|\ll (tq)^{1/2-\sigma}|L(1-s,\chi)|$, we get
\begin{align*}
    \bigg|\sum_{n\leq q/y} \chi(n)f(n)\bigg| \ll & \bigg|\int_{1/2-1/\log y-i\infty}^{1/2-1/\log y+i\infty}\frac{L(s,\chi)}{Ts(s+1)}\big( (q/y)^{s+1}-(q/y-T)^{s+1}\big)ds\bigg| \\
    \ll & \int_0^{\infty}\frac{|L(1/2-1/\log y+it,\bar\chi)|}{T(t+1)^2} \big| (q/y)^{3/2-1/\log y +it}-(q/y-T)^{3/2-1/\log y +it}\big| dt \\
    \ll &\int_0^{\infty}\frac{q^{1/\log y} |L(1/2+1/\log y+it,\bar\chi)|}{T(t+1)^{2-1/\log y} } \big| (q/y)^{3/2-1/\log y +it}-(q/y-T)^{3/2-1/\log y +it}\big| dt. \\
\end{align*}
When $t\leq A$ (recall $A=(\log y)^D$ where $D$ is a large constant) we use that $\big| (q/y)^{3/2-1/\log y +it}-(q/y-T)^{3/2-1/\log y +it}\big|\ll (q/y)^{1/2-1/ \log y} T(t+1)$, if $q>A$, then $\big| (q/y)^{3/2-1/\log y +it}-(q/y-T)^{3/2-1/\log y +it}\big|\ll (q/y)^{3/2-1/\log y}$.
\begin{align*}
    \sum_{\chi \in X_q^*} \bigg|\sum_{n\leq q/y}f(n)\chi(n)\bigg|^{2k}\ll & (q/y)^k\sum_{\chi \in X_q^*} \bigg(\int_0^A \frac{|L(1/2+1/\log y +it,\chi)|}{t+1}\bigg)^{2k} \\  
    +& (q/y)^k(\log y)^{2kC}\sum_{\chi \in X_q^*} \bigg( \int_{A}^q \frac{|L(1/2+1/\log y+it,\chi) |}{(t+1)^{2-1/\log y} }  dt \bigg)^{2k} \\
    + & (q/y)^k(\log y)^{2kC} \sum_{\chi \in X_q^*} \bigg( \int_{q}^{\infty} \frac{|L(1/2+1/\log y+it,\chi) |}{(t+1)^{2-1/\log y}} dt \bigg)^{2k} \\
\end{align*}
This can be bounded the same way as it is done in (\ref{breakup}), the only difference is that the exponent of $t+1$ in the denominator is $2-1/\log y\geq 3/2$ instead of 2, since $y\geq 10$, but the argument can be run the same way, so we have shown that if $y\geq 10$, then
\begin{equation}
    \label{hello}
\sum_{\chi \in X_q^*} \bigg|\sum_{n\leq q/y}f(n)\chi(n)\bigg|^{2k}\ll \phi(q)(q/y)^k (\log 2y)^{(k-1)^2}
\end{equation}
Our remaining task is to show \eqref{hello} without the weights $f(n)$ and to handle the case $y\leq 10$. It turns out that we can do these by proving the following inequality.
\begin{proposition}
\label{prop4}
   For all $1\leq y\leq q^{1/2}$ we have
\begin{equation}
    \label{montgomery}
    \sum_{\chi \in X_q^*} \bigg|\sum_{n\leq q/y}\chi(n)\bigg|^{2k}\ll \phi(q) (q/y)^k (\log 2y)^{O(1)}.
\end{equation}
\end{proposition}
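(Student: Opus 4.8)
\noindent The plan is to pass to a dual sum via Pólya's Fourier expansion, extract the main term from the short part of that dual sum --- which reduces to a character sum of length $\asymp y$, handled by the already-proven first part of Theorem~\ref{t3} --- and control the rest along the lines of Montgomery and Vaughan.

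First I would dispose of the range $y=O(1)$ trivially: there $(q/y)^k(\log 2y)^{O(1)}\asymp q^k$, so the bound is immediate from the Montgomery--Vaughan estimate $\sum_{\chi\neq\chi_0}\max_{1\le n'\le q}\big|\sum_{n\le n'}\chi(n)\big|^{2k}\ll\phi(q)q^k$ quoted in the introduction. So assume $y$ is large. For primitive $\chi$ mod $q$, Pólya's Fourier expansion gives
$$\sum_{n\le q/y}\chi(n)=\frac{\tau(\chi)}{2\pi i}\sum_{0<|m|\le q}\frac{\bar\chi(m)}{m}\Big(1-e\big(-m\lfloor q/y\rfloor/q\big)\Big)+O(\log q),$$
and since $|\tau(\chi)|=\sqrt q$ and $\phi(q)(\log q)^{2k}\ll\phi(q)q^{k/2}\ll\phi(q)(q/y)^k$, it suffices to bound $q^k\sum_{\chi\bmod q}\big|\sum_{0<|m|\le q}a_m\bar\chi(m)\big|^{2k}$, where $a_m:=\tfrac1m\big(1-e(-m\lfloor q/y\rfloor/q)\big)$ obeys $|a_m|\ll\min(1/y,1/|m|)$. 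Splitting $\chi$ into even and odd classes and combining the $m$ and $-m$ terms, one is left with a Dirichlet polynomial $\sum_{m\ge1}b_m\bar\chi(m)$ with $b_m$ real (resp.\ purely imaginary) and $|b_m|\ll\min(1/y,1/m)$.

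I would then split at $m=y$. For the head $m\le y$, partial summation (using that the weight is $O(1/y)$ with total variation $O((\log y)/y)$ on $[1,y]$ and vanishes at $m=y$) gives $\big|\sum_{m\le y}b_m\bar\chi(m)\big|\ll\frac{\log y}{y}\max_{t\le y}\big|\sum_{m\le t}\bar\chi(m)\big|$; summing over $\chi$ and applying the first part of Theorem~\ref{t3} dyadically over $t\le y\le q^{1/2}$ yields $\sum_{\chi}\big|\sum_{m\le y}b_m\bar\chi(m)\big|^{2k}\ll\phi(q)\,y^{-k}(\log 2y)^{O(1)}$, hence a contribution of $q^k$ times this, which is exactly $\phi(q)(q/y)^k(\log 2y)^{O(1)}$. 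For the tail $y<m\le q$, I would expand the $2k$-th power and use orthogonality of characters. The diagonal terms, where the two products of $m$'s coincide, contribute (via $|b_{m_1}\cdots b_{m_k}|\ll(m_1\cdots m_k)^{-1}$, the bound $\sum_{N\le X}d_k(N)^2\ll X(\log X)^{k^2-1}$ and partial summation, noting all products exceed $y^k$) at most $q^k\phi(q)\sum_{N>y^k}\tfrac{d_k(N)^2}{N^2}\ll\phi(q)(q/y)^k(\log 2y)^{O(1)}$, as required.

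The hard part is the off-diagonal in the tail: the terms with $m_1\cdots m_k\equiv n_1\cdots n_k\pmod q$ but $m_1\cdots m_k\neq n_1\cdots n_k$. Bounding these by absolute values is far too wasteful --- it costs a spurious factor of size about $q^{k-1}$ --- so genuine cancellation must be exploited. Following Montgomery and Vaughan, the key inputs are that two unequal products congruent modulo $q$ differ by at least $q$, and that under GRH one has $\sum_{n\le X}\chi(n)\ll\sqrt X(\log qX)^{O(1)}$ (together with a corresponding bound for additively twisted sums), which permits truncating the dual sum so that the products occurring are confined to a short range; combined with divisor-correlation estimates this bounds the off-diagonal by $\phi(q)(q/y)^k(\log 2y)^{O(1)}$. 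Adding the contributions from the head, the tail-diagonal, the tail-off-diagonal, and the range $y=O(1)$ then gives the proposition, with the off-diagonal estimate being the main technical obstacle.
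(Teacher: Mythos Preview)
Your head/tail decomposition and the reduction of the head to the first part of Theorem~\ref{t3} are perfectly reasonable (the $\max_{t\le y}$ can be avoided by Minkowski's inequality applied to the partial-summation integral, so that is not a real issue), and your diagonal count in the tail is fine. The genuine gap is exactly where you flag it: the off-diagonal in the tail. Your sketch --- GRH truncation of the dual sum plus ``divisor-correlation estimates'' --- does not close the range. Even if under GRH you truncate the dual sum at $H\asymp y(\log q)^{C}$, a trivial bound on the off-diagonal (products $\le H^{k}$, congruent mod $q$, not equal) still yields a contribution of size $\phi(q)\,q^{k-1+o(1)}$, and this only beats the target $\phi(q)(q/y)^{k}(\log 2y)^{O(1)}$ when $y\ll q^{1/k-\varepsilon}$. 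For $q^{1/k}\ll y\le q^{1/2}$ your argument, as written, does not produce the claimed saving, and neither Montgomery--Vaughan nor standard divisor-correlation bounds give it to you.

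The paper avoids this difficulty entirely and proceeds much more simply. After reducing to integer $k$ by H\"older, it writes $\big(\sum_{h\le q}a_{h}\chi(h)\big)^{k}=\sum_{h\le q^{k}}b_{h}\chi(h)$ with $|b_{h}|\ll d_{k}(h)\min(y^{-k},h^{-1})$, squares, and uses orthogonality. The analogue of your ``off-diagonal'' is then just the contribution of $h>q$ inside each residue class, which is bounded pointwise by $q^{-1+o(1)}$; this produces an error $q^{k+o(1)}$ that is acceptable precisely when $y\le q^{1/(2k)}$. For the complementary range $y>q^{1/(2k)}$ the paper abandons P\'olya altogether and simply quotes the crude GRH moment bound \eqref{pocs3}, namely $\sum_{\chi}\big|\sum_{n\le q/y}\chi(n)\big|^{4k-2}\ll\phi(q)(q/y)^{2k-1}(\log q)^{O(1)}$, and H\"olders down to exponent $2k$; since $\log q\asymp\log y$ once $y>q^{1/(2k)}$, this already gives $\phi(q)(q/y)^{k}(\log 2y)^{O(1)}$. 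So there is no hard off-diagonal estimate at all: the two easy regimes overlap, and that is the whole proof.
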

\begin{proof}[Proof that Proposition \ref{prop4} and \eqref{hello} shows 
\eqref{theorem3secondpart}]

 For $y\leq 10$ the inequality (\ref{montgomery}) is the same as (\ref{theorem3secondpart}), on the other hand when $y\geq 10$, \eqref{montgomery} implies (if $C$ is large enough)
$$\sum_{\chi\in X_q^*} \bigg|\sum_{n\leq q/y}(1-f(n))\chi(n)\bigg|^{2k}\ll \phi(q)(q/y)^k(\log 2y)^{(k-1)^2}$$
the same way as $(\ref{pocs3})$ implied (\ref{theorem3firstrest}). This last inequality with (\ref{hello}) then implies (\ref{theorem3secondpart}).
\end{proof}
\begin{proof}[Proof of Proposition \ref{prop4}]
By Hölder we may assume that $k$ is an integer. Our argument is inspired by the proof of Theorem 1 of \cite{montgomery1979mean}. By Lemma 1 of \cite{montgomery1979mean} we have for any $H>1$,
\begin{equation}
    \sum_{n\leq q/y}\chi(n)=\frac{-\tau(\chi) }{2\pi i}\sum_{1\leq |h|\leq H}\frac{\bar\chi(h)}{h}\big(e(h/y)-1\big)+O(1+qH^{-1}\log q).
\end{equation}
Here, $\tau(\chi)=\sum_{n=1}^q \chi(n)e\big(\frac{n}{q}\big)$ is the usual Gauss sum and $e(x)=e^{2\pi i x}$. For $\chi$ primitive we have $|\tau(\chi)|=q^{1/2}$, so
$$\sum_{\chi \in X_q^*} \bigg|\sum_{n\leq q/y}\chi(n)\bigg|^{2k}\ll \phi(q)(1+(qH^{-1}\log q))^{2k}+q^k\sum_{\chi \in X_q}\bigg|\sum_{1\leq h\leq H}\frac{\chi(h)}{h}\big(e(h/y)-1\big)\bigg|^{2k}.$$
We thus choose $H=q$ to get an acceptable contribution from the error terms. Let $a_h=\frac{e(h/y)-1}{h}$, then $a_h\ll \min(1/y, 1/h)$, so
$$\bigg|\sum_{1\leq h\leq H}a_h\chi(h)\bigg|^{k}=\sum_{h\leq H^k} b_h\chi(h), $$
where $b_h\ll d_k(h) \min(1/y^k, 1/h)$, so by the orthogonality of characters
\begin{equation}
    \label{orthogonal}
\sum_{\chi \in X_q}\bigg|\sum_{1\leq h\leq H}\frac{\chi(h)}{h}\big(e(h/y)-1\big)\bigg|^{2k}=\sum_{\chi \in X_q}\bigg|\sum_{1\leq h\leq H^k}b_h\chi(h)\bigg|^{2}= \phi(q)\sum_{\substack{1\leq m\leq q \\ (m,q)=1} } \bigg|\sum_{\substack{1\leq h\leq H^k \\ h\equiv m \, (q)}} b_h \bigg|^2 
\end{equation}
For each $1\leq m\leq q$ one has
$$\sum_{\substack{q\leq h\leq H^k \\ h\equiv m \, (q)}} b_h\ll \frac{\max_{1\leq h\leq H^k} d_k(h)}{q}\sum_{h\leq H^K} \frac{1}{h}\ll q^{-1+o(1)}.$$
Therefore, if in \eqref{orthogonal} inside each inner sum we separate whether $h\leq q$ or $h>q$ we get that \ref{orthogonal} is
$$\ll \phi(q)\sum_{1\leq h\leq q}d_k(h)^2 \min (y^{-2k}, h^{-2})+q^{o(1)}.$$
By (10) of \cite{montgomery1979mean} we have
$$\sum_{1\leq h\leq q}d_k(h)^2 \min (y^{-2k}, h^{-2})\leq y^{-2k}\sum_{h\leq y^k} d_k(h)^2+\sum_{h\geq y^k} \frac{d_k(h)^2}{h^2}\ll y^{-k}(\log 2y)^{k^2-1},$$
so we gain that if $k$ is an integer then
$$\sum_{\chi \in X_q^*} \bigg|\sum_{n\leq q/y}\chi(n)\bigg|^{2k}\ll \phi(q)(q/y)^{k}(\log 2y)^{k^2-1}+q^{k+o(1) } .$$
This shows (\ref{montgomery}) when $y\leq q^{1/2k}$. On the other hand, when $y> q^{1/2k}$ the inequality (\ref{montgomery}) is in fact implied by (\ref{pocs3}) (note that this is the part where we use GRH for the proof of this proposition). Therefore the second part of Theorem \ref{t3} is proved.
\end{proof}
\section{Acknowledgements}
The author is supported by the Warwick Mathematics Institute Centre for Doctoral Training, and gratefully acknowledges funding from the University of Warwick and the UK Engineering and Physical Sciences Research Council (Grant number: EP/TS1794X/1). The author is grateful to Adam Harper for the many useful discussions, suggestions and for carefully reading through an earlier version of this manuscript.
\printbibliography
\end{document}